\documentclass{amsart}
\usepackage{amssymb,amsmath,amsthm,epsf,epsfig,dsfont,bbm}
\usepackage[margin=90pt]{geometry}
\usepackage{verbatim}
\usepackage{latexsym}
\usepackage{graphicx}
\usepackage[utf8]{inputenc}
\usepackage[backref=page]{hyperref}
\hypersetup{
	colorlinks   = true,
	citecolor    = blue,
	linkcolor    = purple 
}
\usepackage{upref, eucal}
\usepackage[all]{xy}

\newcommand {\nc} {\newcommand}
\newcommand {\enm} {\ensuremath}

\def \d{\delta}

\nc {\bdm} {\begin{displaymath}}
\nc {\edm} {\end{displaymath}}

\newtheorem {theorem} {\bf{Theorem}}[section]
\newtheorem {lemma}[theorem] {\bf Lemma}
\newtheorem {proposition}[theorem] {\bf Proposition}

\newtheorem {definition}[theorem]{\bf Definition}
\newtheorem {corollary}[theorem] {\bf Corollary}
\numberwithin {equation}{section}

\newcommand\BB{\mathbb{B}}\newcommand\FF{\mathbb{F}}\newcommand\HH{{\bH}}\newcommand\XX{{\bX}}
\newcommand\ZZ{\mathbb{Z}}

\newcommand{\Ou}{\enm{\mathcal{O}}}

\nc{\J}{\enm{\mathcal{J} }}
\nc {\Z} {\enm{\mathbb{Z}}}
\nc {\form}[1] {\enm{\mbox{\underline{for}}}_{#1}}
\nc {\prol}[1] {\enm{\mbox{\underline{prol}}_{{#1}^*}}}

\nc {\stk} {\stackrel}

\newcommand{\map}{\rightarrow}

\newcommand{\inj}{\hookrightarrow}

\newcommand{\Pn}[2] {\ensuremath{ {\mathbb{P}}^{#1}_{#2}}}
\nc{\Quot}[3]{\enm{ {\mathfrak{Quot}_{ {#1}/{#2}/{#3}}}}}
\nc{\Hilb}[2]{\enm{ {\mathfrak{Hilb}_{ {#1}/{#2}}}}}
\newcommand{\mfrak}[1]{\mathfrak{#1}}
\newcommand{\mf}[1]{\mathfrak{#1}}
\newcommand{\bb}[1]{\mathbb{#1}}
\newcommand{\mcal}[1]{\mathcal{#1}}

\nc {\Coh}[4] {\ensuremath{H^{#1}(\Pn{#2}{},{#3}({#4}))}}
\nc {\Ch}[3] {\enm{H^{#1}(X_t,{#2}_t({#3}))}}
\nc {\Qphi}[4]{\enm{ {\mathfrak{Quot}^{~#4}_{ {#1}/{#2}/{#3}}}}}
\nc {\Gra}[4]{\enm{ {\mathfrak{Grass}_{#2}({#3},{#4})}}}
\nc {\HomA}[2]{\enm{\mathrm{Hom}_A{#1}{#2}}}
\nc {\tr}{\mathrm{tr}}

\nc {\C}[2]{\enm{\left(\begin{array}{l} {#1} \\ {#2} \end{array} \right)}}
\nc {\mat}[4]{\enm{\left(\begin{array}{ll}{#1} & {#2} \\ {#3} & {#4}
\end{array}\right)}}

\def \vp{\varphi}
\def \mb{\mbox}

 \def \Z{{\mathbb Z}}

   \def \h{\hat{\ }}

\def \d{\delta}   
   
\def \bA{{\mathbb A}}

\def \hG{\hat{\mathbb{G}}_{\mathrm{a}}} 

\def \hA{\hat{A}}

\def \R1{R((q))[q']\h}

\usepackage{xcolor}

\newcommand{\lam}{\lambda}

\DeclareMathOperator{\Spf}{\mathrm{Spf}}

\DeclareMathOperator{\Lie}{\mathrm{Lie}}

\newcommand{\Hom}{\mathrm{Hom}}
\newcommand{\End}{\mathrm{End}}
\newcommand{\Ext}{\mathrm{Ext}}

\newcommand{\teich}{\mathfrak{v}}

\newcommand{\longlabelmap}[1]{{\,\buildrel #1\over\longrightarrow\,}}
\newcommand{\longmap}{{\,\longrightarrow\,}}
\newcommand{\Fil}{\mathrm{Fil}}

\newcommand{\mff}{\mfrak{f}}

\nc{\tR}{\tilde{R}}
\nc{\bx}{\mathbf{x}}
\nc{\by}{\mathbf{y}}
\nc{\bz}{\mathbf{z}}
\nc{\ba}{\mathbf{a}}
\nc{\Fp}{\tilde{F}}
\nc{\Rp}{\tilde{R}}
\nc{\mlow}{m_{\mathrm{l}}}
\nc{\mup}{m_{\mathrm{u}}}
\nc{\ord}{{\mathrm{ord}}}
\nc{\bX}{\mathbf{X}}
\nc{\bH}{\mathbf{H}}
\nc{\bXp}{\mathbf{X}_{\mathrm{prim}}}
\nc{\bPsi}{\mathbf{\Psi}}
\nc{\mult}{\mathrm{mult}}
\nc{\mbB}{\mathbbm{B}}
\nc{\mfor}[1]{{#1}^{\mathrm{for}}}
\nc{\Hdr}{\bH^*_{\mathrm{dR}}}
\nc{\bt}{{\bf t}}
\nc{\beqar}{\begin{eqnarray*}}
\nc{\eeqar}{\end{eqnarray*}}
\nc{\fra}{\mfrak{f}}
\nc{\tW}{\tilde{W}}

\nc{\bo}{{\bf b}}
\nc{\hq}{{\hat{q}}}
\nc{\wHH}{\mathbf{H}_{\d}(E)}
\nc{\qH}{\mfrak{q}_{\wHH}}
\nc{\bfH}{\mathbf{H}(E)}
\nc{\admissible}{elliptic }
\nc {\rnk}{\mathrm{rank}}

\nc{\tF}{\tilde{F}}
\nc{\Nn}{N^{\mn}}
\nc{\del}{\Delta}
\nc{\tilW}{\tilde{W}}
\nc{\Di}[2]{\Delta^{#1}i^*\d^{#2}}
\nc{\di}[1]{i^*\d^{#1}}
\newcommand{\oE}{\overline{E}}
\nc{\pr}{\mathrm{pr}}
\nc{\Mat}{\mathrm{Mat}}

\nc{\tTheta}{\tilde{\Theta}}

\nc {\cblue}{\color{black}}

\DeclareRobustCommand{\loongleftarrow}{%
\leftarrow\joinrel\relbar\joinrel\relbar\DOTSB
}

\title
[Delta Theory of Anderson Modules II: Hodge--Pink structure]
{Delta Theory of Anderson Modules II: Hodge--Pink structure}

\author{Sudip Pandit}
\address{Department of Mathematics, 
King's College London, Strand, London WC2R 2LS, UK}
\email{sudip.pandit@kcl.ac.uk}

\author{Arnab Saha}
\address{Department of Mathematics, 
Indian Institute of Technology Gandhinagar, Gujarat 382355, India}
\email{arnab.saha@iitgn.ac.in}

\date{}

\subjclass[2010]{Primary 11G09, 14G17, 14L05, 14L15, 14L17, 14B20.}

\keywords{Witt vectors, jet spaces, Drinfeld module, Anderson module, $\d$-character, $z$-isocrystal, Hodge--Pink structure}

\begin{document}
\begin{abstract}
In this article, using the theory of $\d$-geometry, 
we construct a canonical  $z$-isocrystal 
$(\mathbf{H}_\delta(E), \mathfrak{f}^*)$ admitting a  Hodge-Pink structure for any abelian Anderson module $E$. The Hodge-Pink structure on $\mathbf{H}_\delta(E)$ induces a natural 
filtration 
$(\mathbf{H}_\delta(E) 
\supset \mathbf{X}_{\mathrm{prim}}(E)
\supset \{0\})$. 
The elements of $\mathbf{X}_{\mathrm{prim}}(E)$  are represented by primitive delta characters associated to $E$.

We establish a natural morphism from $\mathbf{H}_\delta(E)$ to the associated de Rham cohomology module $\mathbf{H}^*_{\mathrm{dR}}(E)$, which is strictly compatible with the aforementioned filtration and the classical Hodge filtration $(\mathbf{H}^{*}_{\mathrm{dR}}(E)\supset {\Lie(E)^{*}}\supset \{0\})$ on $\mathbf{H}^*_{\mathrm{dR}}(E)$.
Moreover, we show that the map induces an isomorphism between 
$\mathbf{X}_{\mathrm{prim}}(E)$ and $\mathrm{Lie}(E)^*$.  
Hence our isomorphism provides an interesting interpretation of the
 invariant differentials of $E$ as primitive delta characters of $E$.

Furthermore, when $E$ is a Drinfeld module, we show that the constructed $z$-isocrystal $\mathbf{H}_\delta(E)$ is weakly admissible. Consequently, the positive equal characteristic analogue of the Fontaine functor associates a crystalline $z$-adic Galois representation to the $\delta$-geometric object $\mathbf{H}_\delta(E)$. 
%It is also well known that the Tate module attached to $E$ gives rise to a natural Galois representation. 
In the case, when $E$ is the Carlitz module, we show that the Galois representation 
associated to $\mathbf{H}_\delta(E)$ is indeed the 
usual one coming from the Tate module.
\end{abstract}

\maketitle
\section{Introduction}

%\subsection{Background and Motivation}
Delta geometry developed by Buium draws inspiration from the theory of 
differential algebra over a function field 
and its applications in diophantine geometry
\cite{bui92, Buium94, gil02, Manin63}. 
%The foundations of delta geometry were established by Buium approximately three decades ago inspired by differential algebraic geometry and their applications in Diophantine geometry over function fields of characteristic zero 
The delta theory was subsequently developed in the category of $p$-adic formal schemes in a series of foundational articles \cite{Buium 1995, Buium 2000, Barc, Buium-Miller, BuSa1, BuSa2, Hurl}, yielding remarkable applications in diophantine geometry over number fields \cite{BP, Buium 1996, DP25, DP26}.

%Independently, Borger advanced the geometry of Witt vectors and developed algebraic jet spaces \cite{bor11a, bor11b} by functor of points. 

Borger and Saha in \cite{BS b} introduced delta geometry to the positive equal characteristic setting. Their work focused on Drinfeld modules, which are the function field analogs of elliptic curves introduced by Drinfeld to prove the Langlands conjecture for $\mathrm{GL}_2$ over global function fields \cite{Drinfeld_74, Drinfeld_80}. Given a Drinfeld module $E$, Borger and Saha constructed a $z$-isocrystal $(\bH(E),\mff^*)$ equipped with a natural map to the de Rham cohomology $\Hdr(E)$. By an explicit computation, in \cite{PS-3} we have shown that for Drinfeld modules of rank $2$, the $z$-isocrystal is non-degenerate (i.e, the semilinear operator $\mff^*$ is bijective) and proved a comparison theorem with the de Rham cohomology.  In \cite{PS-1}, we extended the delta theory to abelian Anderson modules (higher-dimensional generalizations of Drinfeld modules, cf.\ \cite{And, Hartl 2017}), which serve as the function field analogs of abelian varieties (we have recalled the main results of loc. cit. in\ Theorem \ref{finite_dim_thm}\cblue). The analogous theory in mixed characteristic, developed in a series of papers \cite{BS a, GPS, Pandit26, PS-2}, establishes deep connections with $p$-adic Hodge theory, culminating in comparison theorems with the crystalline cohomology of abelian schemes.

In parallel with Fontaine's foundational work in $p$-adic Hodge theory, Genestier--Lafforgue and Hartl developed equi-characteristic $p$-adic Hodge theory, establishing analogous structural results in \cite{GL2011, Hartl 2011}.

In the present article, building on the groundwork laid in \cite{BS b} and \cite{PS-1}, we construct a canonical $z$-isocrystal endowed with a Hodge--Pink structure associated to an abelian Anderson module, in the sense of \cite{GL2011, Hartl 2011, HJ 2020} (cf.\ Section \ref{LocalShtuka}). For Drinfeld modules, we prove that the associated $z$-isocrystal is weakly admissible. Consequently, the positive equal characteristic analog of the Fontaine functor yields a crystalline $z$-adic Galois representation (cf.\ Section \ref{weak-admissible}), which we compare with the classical Tate module Galois representation in favorable cases (cf.\ Section \ref{comparison}).

This paper serves as a sequel to \cite{PS-1} and maintains strict notational consistency with it. While we refer interested readers to the introductions of \cite{BS b} and \cite{PS-1} for a broader exposition on delta geometry in positive characteristic function-fields, we recall the essential terminology and notation below to ensure this article remains self-contained. 

\subsection{Notations and geometric setup}
Let $q$ be a fixed prime power and $\mathcal{C}$ be a projective, geometrically connected, smooth curve over $\mathbb{F}_{q}$. Fix an $\mathbb{F}_{q}$-rational point $\infty$ on $\mathcal{C}$. Let $A$ be the ring of functions regular outside $\infty$. Fix a maximal ideal $\mathfrak{p}$ of the Dedekind domain $A$ and an element $z\in \mathfrak{p}\setminus \mathfrak{p}^{2}$ of degree $f$ such that $A$ is a separable extension of $\FF_{q}[z]$. Note that it is always possible to find such a uniformizer for any $A$ (cf.\ page 18, Proposition 1.4 in \cite{Silverman}). 

Let $\hat{A}$ be the $\mathfrak{p}$-adic completion of $A$, and let $\pi$ be the image of $z$ in $\hat{A}$, which generates the maximal ideal $\hat{\mathfrak{p}}$. Let $k = A/\mathfrak{p}$ denote the finite residue field, and let $\hq=q^{f}$ be its cardinality. The quotient map $\hat{A}\longrightarrow \FF_{\hq}$ has a unique section, naturally endowing $\hat{A}$ with the structure of an $\FF_{\hq}$-algebra in addition to its $\mathbb{F}_{q}$-algebra structure.

Fix a flat $\hat{A}$-algebra $R$ (i.e., $\pi$-torsion free) that is a $\mathfrak{p}$-adically complete discrete valuation ring. We have an injection of rings:
$$\theta :A\longrightarrow \hat{A} \longrightarrow R$$
where $\theta$ is called the characteristic map. Let us also fix an $\hat{A}$-algebra endomorphism $\phi:R\longrightarrow R$ such that $\phi(x)\equiv x^{\hq}\bmod(\mathfrak{p}R)$ for $x\in R$. The affine formal scheme $\mathrm{Spf}\,R$ is denoted by $S$. Let $K$ be the fraction field of $R$ and $l$ be the residue field. By the Cohen structure theorem, we have $R\simeq l[[z]]$ and $K\simeq l((z))$. 

Given any $R$-module $M$, let $M_\phi$ denote the additive group $M$ 
with the $R$-module structure given by precomposition $R \stk{\phi}{\map}
R$. Also let $M_K$ denote the $K$-vector space $M \otimes_R K$.
Let $v$ denote the $\pi$-adic valuation on $R$, and let $\mathrm{ord}_z$ denote the $z$-adic valuation on $l((z))$. 

%\subsection{Anderson Modules and Prolongation Sequences}
Let $E$ be an abelian Anderson $A$-module of dimension $d$ and rank $r$ over $\mathrm{Spf}\,R$. The $A$-linear action of $z$ is given by 
$$\varphi_{E}(z)=\sum_{i=0}^{s}A_{i}{\tau}^{i}$$
where $A_{0}=\pi I+V(z)$ and $V$ is a nilpotent matrix. We define 
\begin{align*}
	V^{\perp}:=\{u\in \mathrm{Mat}_{1\times d}(R) \mid uV(z)=0\}
\end{align*}
Note that $V^{\perp}$ is a free $R$-submodule of $R^{d}$ of rank $h=d-\mathrm{rank}(V(z))$. 

For every positive integer $n$, following \cite{BS b}, we define the arithmetic jet space $J^{n}E$ of $E$ via its functor of points as 
$$J^{n}E(B):=E(W_{n}(B))$$
for any $\pi$-adically complete $R$-algebra $B$, where $W_n(B)$ is the $\pi$-typical Witt vectors of length $n+1$ \cite{drin76, BS b}. By \cite{bps, bor11b}
the functor $J^nE$ is representable by a $\pi$-formal scheme over $\Spf R$, 
which we continue to denote by $J^nE$.

This yields the short exact sequence of $A$-module schemes:
\begin{equation}\label{fshort1}
0 \map N^n \stk{i}{\map} J^nE \stk{u}{\map} E \map 0 
\end{equation}
where $u:J^nE \map E$ is the natural projection and $N^n = \ker(u)$. 

Let $\hG$ denote the $\pi$-formal additive group scheme over $\mathrm{Spf}\,R$. 
Note that $\hG$ naturally admits an $R$-module structure via scalar multiplication. Let $(\hG,\vp_{\hG})$ denote the $\pi$-formal additive group scheme $\hG$ equipped with the tautological $A$-action induced by the characteristic map $\theta:A \map R$. 

An $A$-linear morphism of group schemes from $J^{n}E$ to $\hat{\mathbb{G}}_{a}$ is called a \textit{differential} (or \textit{$\delta$-character}) of \textit{order $\leq n$} of $E$. We denote the group of all such $\delta$-characters by ${\bX}_{n}(E)$. Since $\hG$ is an $R$-module $\pi$-formal scheme over $S$, $\bX_n(E)$ is naturally an $R$-module. The inverse system $J^{n+1}E\xrightarrow{u} J^{n}E$ induces a directed system via pullback:
$$\ldots \xrightarrow{u^{*}} {\bX}_{n}(E)\xrightarrow{u^{*}} {\bX}_{n+1}(E) \xrightarrow{u^{*}}\ldots$$
We define the colimit as the $R$-module of $\delta$-characters
$${\bX}_{\infty}(E):=\varinjlim {\bX}_{n}(E).$$

The Frobenius morphism $\phi:J^{n+1}E \map J^nE$ on the jet spaces endows $\bX_\infty(E)$ with an $R\{\phi^*\}$-module structure, where $\phi^*$ is additive and $R$-semilinear, satisfying
$$\phi^*(r \Theta ) = \phi(r) \phi^*\Theta$$
and $\phi^*\Theta$ is the relative pullback of $\Theta$ induced by $\phi$.

In \cite{BS a, BS b}, the lateral Frobenius $\mfrak{f}:N^n \map N^{n-1}$ was constructed for all $n$, elevating $\{N^n\}_{n=1}^\infty$ into a prolongation sequence. This operator $\fra$ satisfies
\begin{equation}\label{comp}
\phi \circ i \circ \fra = \phi^{\circ 2} \circ i.
\end{equation}
Consequently, pulling back via $\mfrak{f}$ equips $\varinjlim \mathrm{Hom}_A(N^n,\hG)$ with an $R\{\mfrak{f}^*\}$-module structure. Following the principles of \cite{BS b}, we define the $R$-module:
$$\bH(E) := \varinjlim\frac{\mathrm{Hom}(N^n,\hG)}{i^*\phi^*(\bX_{n-1}(E)_{\phi})}.$$
By (\ref{comp}), the action of $\mfrak{f}^*$ on $\varinjlim \mathrm{Hom}(N^n,\hG)$ descends to $\bH(E)$. Consider the $R$-submodule 
$$\bXp(E):= \varinjlim \bX_n(E)/\phi^*(\bX_{n-1}(E)_{\phi})$$	
which fits into the short exact sequence:
\begin{equation}\label{Xpshort}
0 \map \bXp(E) \map \bH(E) \map \mathbf{I}(E) \map 0
\end{equation}
where ${\mathbf{I}}(E)$ is an $R$-submodule of $\mathrm{Ext}_A(E,\hG)$. Here, $\mathrm{Ext}_A(E,\hG)$ classifies the isomorphism classes of $A$-module $\pi$-formal schemes that are extensions of $E$ by $\hG$ (cf.\ Section 6 of \cite{PS-1}
and Section 5 of \cite{Gekeler_a}). 

Let $\bH^{*}_{\mathrm{dR}}(E)$ denote the de Rham cohomology module of $E$. In our previous article, we proved the following structural result:

\begin{theorem}[cf.~\cite{PS-1}]\label{finite_dim_thm}
For any abelian Anderson $A$-module $E$ of rank $r$ and dimension $d$, we have:
\begin{enumerate}
\item The $R$-module $\bH^{*}_{\mathrm{dR}}(E)$ is free of rank $r$.
\item The $R$-module $\mathrm{Ext}_A(E,\hG)$ is free of rank $r-h$. 
\item The $R$-module $\bXp(E)$ is free of rank $h$.
\item The $R$-module ${\bH}(E)$ is free of rank $\leq r$. 
\end{enumerate}
\end{theorem}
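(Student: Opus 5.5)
Since this is a summary of results from \cite{PS-1}, the plan is to rebuild each part in the order (1), (2), (3), (4), letting each feed the next. Part (4) comes last because it follows formally from (2), (3) and the exact sequence (\ref{Xpshort}).

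For (1) and (2) I would use the biderivation description of extensions, following Gekeler \cite{Gekeler_a} and its generalization to Anderson modules. An $A$-linear extension of $E$ by $\hG$ is determined by an $\FF_q$-linear map $\eta:A\to R\{\tau\}^{1\times d}$ satisfying $\eta(ab)=\theta(a)\eta(b)+\eta(a)\varphi_E(b)$, taken modulo inner biderivations. Because $A$ is separable over $\FF_q[z]$, such an $\eta$ is determined by $\eta(z)$, up to a compatibility that holds automatically after a finite étale base change; so it suffices to work with $\eta(z)=\sum_j u_j\tau^j$.
\begin{itemize}
\item \emph{Reduction of $\eta(z)$.} Using the inner derivations $\eta_w(z)=\pi w-w\varphi_E(z)$ with $w\in R\{\tau\}^{1\times d}$, I would apply a division algorithm. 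Since $A_s$ is invertible (abelian and pure case), one can kill all $\tau$-degrees $\geq s$.
\item \emph{Degree-zero part.} In degree $0$ the inner term is $\pi w_0-w_0(\pi I+V)=-w_0V$. So the constant coefficient is only determined modulo the image of right multiplication by $V$.
\item \emph{Counting for (1).} Counting the surviving coefficients, and identifying $\bH^*_{\mathrm{dR}}(E)$ with strictly reduced biderivations modulo strictly inner ones, gives freeness of rank $sd$. Rank $sd$ equals $r$ for a pure abelian module, by comparison with the $t$-motive $M(E)$, which is free of rank $r$ over $R[z]$ with $\tau$-basis given by the coordinates.
\item \emph{Deducing (2).} $\mathrm{Ext}_A(E,\hG)$ is the quotient of $\bH^*_{\mathrm{dR}}(E)$ by the invariant differentials satisfying the condition $u\in V^\perp$, which has rank $h$. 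Equivalently, by the Hodge sequence $0\to\Lie(E)^*$-type piece $\to\bH^*_{\mathrm{dR}}\to\mathrm{Ext}\to0$, the Ext module is free of rank $r-h$.
\end{itemize}

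For (3) I would argue directly with $\delta$-characters.
\begin{itemize}
\item \emph{Order-$n$ characters as a linear system.} In Buium--Witt coordinates $J^nE\simeq\hat{\mathbb{A}}^{d(n+1)}$ with the $\pi$-typical Witt addition. An $A$-linear $\Theta\in\bX_n(E)$ is a row vector $\Theta=\sum_{i\le n}\Psi_i\phi^{*i}$, with $\Psi_i\in R\{\tau\}^{1\times d}$, satisfying $\Theta\circ\varphi_{J^nE}(z)=\pi\Theta$.
\item \emph{Expanding the action.} Writing the induced action $\varphi_{J^nE}(z)$ via ghost components turns this into a recursive linear system. The leading coefficients must lie in $V^\perp$ twisted by $\phi^n$, and the remaining coefficients are then solved uniquely and $\pi$-integrally.
\item \emph{Freeness.} This gives $\bX_n(E)/\phi^*\bX_{n-1}(E)_\phi\cong V^\perp$ for $n\ge n_0$ (with $n_0$ of the order of $s$). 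The transition maps $u^*$ are then isomorphisms on these quotients, so $\bXp(E)$ is free of rank $h$.
\end{itemize}

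For (4), the sequence (\ref{Xpshort}) exhibits $\bH(E)$ as an extension of $\mathbf{I}(E)\subset\mathrm{Ext}_A(E,\hG)$ by $\bXp(E)$.
\begin{itemize}
\item $R$ is a DVR, so the submodule $\mathbf{I}(E)$ of the free module from (2) is free of rank $\le r-h$.
\item The extension of free modules splits, so $\bH(E)$ is free of rank $\le h+(r-h)=r$, using (3).
\end{itemize}

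The main obstacle is part (3). The Witt-vector addition and the $\pi$-typical action on $J^nE$ make the recursion nonlinear in appearance. The difficulty is showing that the system has $\pi$-integral solutions, not merely solutions after inverting $\pi$, and that the solution space stabilizes at exactly rank $h$. I would first try the Drinfeld-module computation of \cite{BS b}, run coordinatewise: block-diagonalize using $A_s$ invertible, and control divisibility through the congruence $\phi(x)\equiv x^{\hq}$ mod $\pi$.
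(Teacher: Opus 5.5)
Part (4) is fine, but parts (1)–(3) have gaps. The decisive one is in (3).

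\textbf{Parts (1) and (2).} Your reduction in (1) proves the count only under hypotheses that are not in the statement.
\begin{itemize}
\item \emph{Purity is not assumed.} Definition \ref{abAnddef} does not require $A_s$ to be invertible, and when it is singular, $r\neq sd$. Take $E=E_1\oplus E_2$ with Drinfeld $\FF_q[z]$-modules of ranks $1$ and $2$. Then $d=s=2$ and $\mathrm{rank}\,A_2=1$, but $r=3\neq 4$, and there is no division algorithm.
\item \emph{The case $A\neq\FF_q[z]$.} Here the motive $M(E)$ has rank $rf$, not $r$, over $R[z]$. Reduced $\FF_q[z]$-biderivations modulo strictly inner ones form a module of rank $rf$: the de Rham module of $E$ viewed as an $\FF_q[z]$-module. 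The condition for $\eta(z)$ to extend to all of $A$ is a genuine linear constraint cutting the rank down to $r$; it does not become automatic after an \'etale base change. For example, take $A=\FF_q[t]$, $z=P(t)$ with $P$ irreducible of degree $f\ge 2$, and $E$ the Carlitz module. This is pure with $A_s=1$, yet $r=1$ while $sd=f$.
\end{itemize}
A uniform argument works over $A\otimes_{\FF_q}R$ instead. Let $N\subset M(E)$ be the submodule of maps with vanishing linear term and $J=\ker(A\otimes_{\FF_q}R\to R)$.
\begin{itemize}
\item Reduced biderivations modulo strictly inner ones are the Hochschild $H^1$ of $A$ with coefficients in $N$.
\item This $H^1$ is $N/JN$ up to an invertible twist, because the diagonal of the smooth curve $\mathrm{Spec}\,A$ is a Cartier divisor.
\item Since $M(E)/N\cong R^{1\times d}$ is $R$-free, it has projective dimension $\le 1$ over the two-dimensional regular ring $A\otimes_{\FF_q}R$. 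Hence $N$ is locally free of rank $r$, and (1) follows with no purity hypothesis.
\end{itemize}
In (2), a quotient of a free module by a free submodule need not be free, so you must show $\Lie(E)^*$ is saturated. Suppose $\pi\eta=\eta_w$. Reducing mod $\pi$ gives $\bar w\,\bar\varphi_E(z)=0$. Since $M(E)\otimes_R l$ is torsion-free over $A\otimes_{\FF_q}l$, this forces $\bar w=0$, so $w=\pi w'$ and $\eta=\eta_{w'}$.

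\textbf{Part (3).} Your sketch gives at most an upper bound; nothing in it produces characters. The top-coefficient condition is only necessary, and the recursion for the lower coefficients is genuinely obstructed. For a non-CL Drinfeld module of rank $2$, $V^\perp=R$ but $\bX_1(E)=0$; in general $\bX_n(E)=0$ below the splitting number. Nothing in the proposal explains why the obstruction disappears for large $n$. The real difficulty is existence, not integrality.

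Your identification $\bX_n(E)/\phi^*\bX_{n-1}(E)_\phi\cong V^\perp$ via leading coefficients is also not well defined. Since $\phi^*$ sends $\bx^{(n-1)}\mapsto(\bx^{(n-1)})^{\hq}+\pi\bx^{(n)}$, it multiplies the top linear coefficient by $\pi$ (after twisting) rather than killing it. The linear invariant $\phi^*$ does kill is the $\bx_0$-derivative $P_0$, i.e.\ the map $\Upsilon$. Even for this map the paper obtains only injectivity with $\pi$-torsion cokernel (Theorem \ref{isoupsilon}), not an isomorphism onto $V^\perp$.

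The missing idea is a rank count that uses (2); this is how the results this paper quotes from \cite{PS-1} fit together.
\begin{itemize}
\item First compute $\bX(N^n)$. It has rank $h$ for $N^1$, via explicit characters $\Psi_{1j}$ with $D\Psi_{1j}=\pi^\nu B_{0j}$. It then has rank $nh$, via $N^n\simeq J^{n-1}(N^1)$ and Lemma \ref{ind-psi}.
\item Feed this into the sequence \eqref{connecting}, $0\to\bX(E)\to\bX_n(E)\to\bX(N^n)\xrightarrow{\partial}\Ext_A(E,\hG)$. This gives $nh-(r-h)\le\mathrm{rank}\,\bX_n(E)/\bX(E)\le nh$ for all $n$.
\item Combine with the structure theory of primitive characters: for large $n$, $\bX_n(E)_K$ is freely spanned by the $\phi^{*j}\Theta_i$. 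So its rank is $kn+O(1)$ when there are $k$ primitive generators, and the bounds force $k=h$.
\end{itemize}
With (2) and (3) in place, your part (4) is correct as written.
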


Furthermore, the theory of vectorial extensions (cf.\ Sections 6 and 8 in \cite{PS-1}) provides a natural map between the following short exact sequences:
\begin{equation}\label{diag-crys-limit-intro}
\xymatrix{
0  \ar[r] &{\bX}_{\mathrm{prim}}(E)\ar[d]^\Upsilon \ar[r] &{\bH}(E)\ar[d]^\Phi \ar[r] &\mathbf{I}(E)\ar@{^{(}->}[d] \ar[r]&0 \\
0 \ar[r] &\mathrm{Lie} (E)^{*} \ar[r] &\bH^{*}_{\mathrm{dR}}({E}) \ar[r] &\mathrm{Ext}_{A}(E,\hat{\mathbb{G}}_{a}) \ar[r] & 0.}
\end{equation}
The Hodge sequence in the above was also implicitly discussed in Proposition $3.1.3$ in \cite{BrPa} for $A=\FF_q[z].$ Note that even though the dimension of $\Lie E=d$ for an abelian Anderson module, its $A$-linear dual $\Lie (E)^* := 
\Hom_A(\Lie(E),\hG)$ might 
have rank less than $d.$ Precisely, we have proved the following theorem:
\begin{proposition} [\cite{PS-1}, Proposition $3.10$ ]Let $E$ be an abelian Anderson $A$-module of dimension $d$ and rank $r$ over 
$R$. The $R$-module $\Lie (E)^*$ is free of rank $h.$
\end{proposition}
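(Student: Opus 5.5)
The plan is to identify $\Lie(E)^* = \Hom_A(\Lie(E),\hG)$ with an explicit submodule of row vectors and compute that submodule. Since $\Lie(E)$ is the tangent space at the identity, it is $R^d$ (column vectors). The $A$-action on it is the derivative of $\varphi_E$, so $z$ acts by the constant term $A_0 = \pi I + V(z)$; the higher terms $A_i\tau^i$ with $i\ge 1$ have zero differential in characteristic $p$. On $\hG$, $z$ acts by multiplication by $\theta(z)=\pi$. An $R$-linear map $\Lie(E)\to \hG$ is a row vector $u\in \Mat_{1\times d}(R)$, i.e.\ $x\mapsto ux$.

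The key steps, in order, are as follows.

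First, check that $R$-linear homomorphisms of the formal vector group $\Lie(E)\cong \hat{\mathbb{G}}_a^d$ to $\hG$ which are $A$-linear are automatically given by row vectors. A general additive map is $\sum_j u_j\tau^j$. Compatibility with the $z$-actions means
$$\Big(\sum_j u_j\tau^j\Big)(\pi I+V(z)) = \pi\sum_j u_j\tau^j.$$
Comparing $\tau^j$-coefficients gives
$$u_j\big(\pi^{q^j}I+V(z)^{(q^j)}\big)=\pi u_j .$$
For $j\ge1$ this reads $u_j\big((\pi^{q^j}-\pi)I + V^{(q^j)}\big)=0$. Since $V^{(q^j)}$ is nilpotent, the matrix $(\pi^{q^j}-\pi)I+V^{(q^j)}$ is invertible over $K$, and $R$ is torsion free, so $u_j=0$. (If the paper's convention already takes $\Hom_A$ to mean linear maps of the Lie algebras, this step is simply skipped.)

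Second, for $j=0$ the condition becomes $u(\pi I+V(z))=\pi u$, i.e.\ $uV(z)=0$. Hence $\Lie(E)^*\cong V^{\perp}$. It remains to see that $V^\perp$ is free of rank $h=d-\rnk V(z)$, as asserted in the setup. Here $V^\perp$ is the kernel of right multiplication by $V(z)$ on $R^d$. Because $R$ is a DVR, hence a PID, this kernel is a submodule of a free module and so is free. It is also saturated: if $ru\in V^\perp$ with $r\neq0$, then $u\in V^\perp$, since $R$ is a domain. Its rank equals $\dim_K$ of the kernel over $K$, namely $d-\rnk_K V(z)=h$.

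The main obstacle I expect is the first step: making precise the $A$-module structure on $\Lie(E)$ and on $\hG$ for general $A$, not just $z$. Since $A$ is generated over $\FF_q[z]$ by elements acting compatibly, and $A\otimes R$-linearity is determined by $z$ once $\FF_q$-linearity is imposed, I would argue that $A$-linearity reduces to $z$-linearity. The justification is that $A$ is a finite separable extension of $\FF_q[z]$ and the maps are $R$-linear, so commuting with $z$ forces commuting with all of $A$ by the uniqueness of the lifted action on the torsion-free target.
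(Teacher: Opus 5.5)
Your route is the paper's: it identifies $\Lie(E)^*$ with $V^{\perp}$, deferring the details to PS-1, and uses that $R$ is a DVR. Your first two steps are correct. The $\tau^j$-terms with $j\geq 1$ vanish because $(\pi^{q^j}-\pi)I+V(z)^{(q^j)}$ has determinant $(\pi^{q^j}-\pi)^d\neq 0$, and $V^{\perp}$ is free of rank $d-\rnk V(z)=h$.

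\textbf{The gap.} You have not proved the inclusion $V^{\perp}\subseteq \Lie(E)^*$, i.e.\ that $uV(z)=0$ forces $uV(a)=0$ for every $a\in A$. The identity $V(ab)=\theta(a)V(b)+\theta(b)V(a)+V(a)V(b)$ only gives this for $a\in\FF_q[z]$. Without it you only get $\Lie(E)^*\subseteq V^{\perp}$, hence rank $\leq h$. Your justification (``uniqueness of the lifted action on the torsion-free target'') does not work. Torsion-freeness plays no role, and $z$-linearity does not imply $A$-linearity in general. For instance, take $A=\FF_q[z,y]/(y^2-g(z))$ with $q$ odd and $g$ squarefree of odd degree. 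The identity map from $\hG$ with $z\mapsto\pi$, $y\mapsto-\theta(y)$ to $(\hG,\theta)$ is $z$-linear but not $A$-linear. So any correct argument must use condition (ii) of Definition \ref{abAnddef}, which your sketch never invokes.

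\textbf{How to close it.} Fix $a\in A$ and write $\varphi_{\Lie(E)}(a)=\theta(a)I+V(a)$, where $V(a)$ is nilpotent by (ii).
\begin{enumerate}
\item Since $\varphi_{\Lie(E)}(a)$ commutes with $V(z)$, the map $T(u)=u\,\varphi_{\Lie(E)}(a)$ preserves $V^{\perp}$, and $T-\theta(a)$ is nilpotent on $V^{\perp}_K$.
\item Let $P=\sum_i c_iX^i\in\FF_q[z][X]$ be the minimal polynomial of $a$ over $\FF_q(z)$. It is separable because $\mathrm{Frac}(A)/\FF_q(z)$ is.
\item For $w\in V^{\perp}$ and $c\in\FF_q[z]$ you have $w\,\varphi_{\Lie(E)}(c)=\theta(c)w$. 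Using commutativity, this gives $0=u\,\varphi_{\Lie(E)}(P(a))=\sum_i\theta(c_i)T^i(u)$. Hence $P^{\theta}(T)=0$, where $P^{\theta}$ is obtained by applying $\theta$ to the coefficients.
\item As $\theta$ is injective, it embeds $\FF_q(z)$ into $K$, so $P^{\theta}$ is separable.
\item The minimal polynomial of $T$ is a power of $X-\theta(a)$ dividing the separable polynomial $P^{\theta}$, so it equals $X-\theta(a)$.
\end{enumerate}
Thus $T=\theta(a)$ on $V^{\perp}$, i.e.\ $uV(a)=0$. This gives $\Lie(E)^*=V^{\perp}$ and completes your argument.
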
 

\subsection{Statement of main results}
An intriguing problem is to understand the precise relationship between the two short exact sequences in (\ref{diag-crys-limit-intro}). Towards this end, our first main result is the following:

\begin{theorem}\label{thm-1-intro}
Let $E$ be an abelian Anderson $A$-module of dimension $d$ and rank $r$ over 
$R$. 
The map $\Upsilon: \bXp(E) \map \mathrm{Lie} (E)^{*} $ is an injective morphism of $R$-modules with $\pi$-torsion cokernel. 

In particular, $\Upsilon_K: \bXp(E)_K \map \mathrm{Lie} (E)^{*}_K$ is an isomorphism of $K$-vector spaces.
\end{theorem}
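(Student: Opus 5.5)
We first make $\Upsilon$ explicit and then prove the two claims separately. A primitive $\d$-character $\Theta\in\bX_n(E)$ is written in coordinates as $\Theta=\sum_{j=0}^{n}\Psi_j\,\phi^{*j}$, with row vectors $\Psi_j\in\Mat_{1\times d}(R)\{\tau\}$. The map $\Upsilon$, coming from the vectorial-extension construction of \cite{PS-1}, sends the class of $\Theta$ to the linear term of its restriction to $N^n$ along the first ghost/Witt coordinate. Concretely, $\Upsilon(\Theta)$ is the constant row vector $u_\Theta\in R^{d}$ obtained by differentiating $\Theta\circ i$ at the origin in the degree-one coordinate of $N^n$. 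The $A$-linearity of $\Theta$, tested against $\vp_E(z)=\pi I+V(z)+\sum_{i\ge1}A_i\tau^i$, forces $u_\Theta V(z)=0$. So $u_\Theta\in V^\perp$, which is the coordinate description of $\Lie(E)^*$ (free of rank $h$ by Proposition 3.10 of \cite{PS-1}). Since $\bXp(E)$ is also free of rank $h$ by Theorem \ref{finite_dim_thm}(3), $\Upsilon$ is a map between free $R$-modules of equal rank $h$. Hence $\Upsilon$ is injective with $\pi$-torsion cokernel if and only if $\Upsilon_K$ is an isomorphism, and for maps of equal-rank free modules injectivity of $\Upsilon_K$ suffices. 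The proof therefore reduces to injectivity after tensoring with $K$, which itself reduces to injectivity of $\Upsilon$, since $\bXp(E)$ is torsion free.

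\textbf{Step 1: injectivity, by a leading-term argument.} Suppose $\Upsilon(\bar\Theta)=0$ for some $\Theta\in\bX_n(E)$, with $n$ minimal among representatives modulo $\phi^*\bX_{n-1}(E)_\phi$. Restrict $\Theta$ to $N^n$ via $i$. Using $N^n\simeq$ a product of copies of $\hG^d$ in Witt coordinates, together with the relation $\phi\circ i\circ\fra=\phi^{\circ2}\circ i$, we analyse the components of $i^*\Theta$. The vanishing of the linear term in the lowest-order coordinate should force the $\phi^{*0}$-coefficient $\Psi_0$ to be divisible on the right by $\phi^*$ in the appropriate sense. This means $\Theta-\phi^*\Theta'$ has strictly smaller order for some $\Theta'\in\bX_{n-1}(E)$, which contradicts minimality unless $\bar\Theta=0$. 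The $A$-linearity equation $\Theta\circ\vp_{J^nE}(z)=\theta(z)\Theta$ must be applied degree by degree in $\tau$ and $\phi$. The key point is that the coefficient $\pi$ in $A_0$ is a nonzerodivisor, so one can divide recursively; this is where $\pi$-torsion freeness of $R$ enters.

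\textbf{Step 2: conclusion, and the main obstacle.} Injectivity together with equality of ranks gives the result: $\Upsilon_K$ is an injective map of $h$-dimensional $K$-spaces, hence an isomorphism, and the cokernel of $\Upsilon$ is a finitely generated $R$-module that vanishes after tensoring with $K$, hence is $\pi$-torsion. The main obstacle is Step 1. The difficulty is to show that vanishing of the first-order (Lie) part genuinely lets one peel off a $\phi^*$, rather than merely making $\Theta$ divisible by $\pi$. If this direct approach stalls, I would instead use the commutative diagram (\ref{diag-crys-limit-intro}) and an explicit basis of $\bXp(E)$ from \cite{PS-1}, built from the recursion for the $\Psi_j$. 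I would then compute $\Upsilon$ on that basis and check that the resulting $h\times h$ matrix of vectors in $V^\perp$ has nonzero determinant, likely equal to a power of $\pi$ times a unit. This computation would give injectivity and the $\pi$-torsion cokernel at once.
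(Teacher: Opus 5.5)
Your reduction is sound: since $\bXp(E)$ and $\Lie(E)^*$ are both free of rank $h$, it suffices to show that $\Upsilon_K$ is injective (or surjective). However, the map you then analyse is not $\Upsilon$. The linear coefficient of $\Theta\circ i$ in the first coordinate $\bx'$ of $N^n$ does not vanish on $\phi^*\bX_{n-1}(E)_\phi$. Indeed, for $\Theta=\phi^*\Theta'$, Proposition \ref{diff} gives $\iota^*\Theta=\fra^*\iota^*\Theta'+\gamma_{\Theta'}\Psi_1$. Since $\fra(\bx',\bx'',\dots)=(\bx'^{(\hq)}+\pi\bx'',\dots)$ has no term linear in $\bx'$, your coefficient is just that of $\gamma_{\Theta'}\Psi_1$, which is nonzero whenever $\Upsilon(\Theta')\neq 0$. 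So it does not descend to $\bXp(E)$. Also, $A$-linearity constrains it through $V(z)^{\phi}$, not $V(z)$. The correct description is $\Upsilon(\Theta)=\gamma_\Theta/\pi=P_0$. This is the linear coefficient of $\Theta$ in the $E$-direction $\bx_0$, i.e.\ the differential of $\teich^*\Theta$.

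The more serious gap is Step 1, which contains no argument. The claim that vanishing of the linear term ``should force'' a $\phi^*$ to peel off is precisely the theorem: the matrix $\Gamma=(\gamma_1,\dots,\gamma_h)^t$ of a primitive basis must be invertible over $K$. A kernel element $\Theta$ would satisfy $\iota^*\phi^*\Theta=\fra^*\iota^*\Theta$, so $\iota^*\Theta$ would be a nonzero vector of $\bH(E)_K$ killed by $\fra^*$. Ruling this out is a nondegeneracy statement; the paper actually deduces Theorem \ref{bijfra} from this very theorem. None of the coordinate bookkeeping you describe produces it. Your fallback does not help either: for general $E$ one only knows that a primitive basis exists (with orders $\le\mup$), so the $\gamma_j$ cannot be computed directly.

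The missing ingredient is arithmetic. The paper uses the characteristic polynomial $P(X)=X^r+\dots+a_0$, with $a_0\neq0$, of the Frobenius $\overline{F}$ of the special fibre. Because $P(\overline F)=0$, the map $P(\phi)$ factors through $\phi\circ\iota:N^1\to E$. This gives $\mf g:J^rE\to N^1$ and characters $\tTheta_j=\pr_j\circ\Psi_1\circ\mf g$ with $(\phi\circ\iota)^*\tTheta_j=\pr_j\circ(\Psi_{r+1}+a_{r-1}\Psi_r+\dots+a_0\Psi_1)$. Comparing with Proposition \ref{diff} and Lemma \ref{ind-psi} yields $\gamma_{\tTheta_j}=a_0e_j$, i.e.\ $\Delta^*\tTheta_j=a_0e_j\cdot\Psi_1$. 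On the other hand, expanding $\tTheta_j$ in the primitive basis gives $\Delta^*\tTheta_j=\bigl(\sum_i\phi(c_{ij})\gamma_i\bigr)\cdot\Psi_1$. Hence $\Gamma$ multiplied by an $h\times h$ matrix over $K$ equals $a_0\mathbbm{1}_h$, so $\Gamma$ is invertible. That is surjectivity of $\Upsilon_K$, and your rank count then finishes the proof.
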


Theorem 1.2 in \cite{PS-1} (recalled as Theorem \ref{finite_dim_thm} (4) above) 
establishes that $\bH(E)$ is a finite free module over $R\simeq l[[z]]$. Under this identification, the lift of Frobenius $\phi:l[[z]]\longrightarrow l[[z]]$ sends $z\mapsto z$ and $a\mapsto a^{q}$ for $a\in l$. We define the $l((z))$-vector space $\wHH:=\bH(E)\otimes_R l((z))$. The semilinear operator $\fra^*$ naturally extends to $\wHH$. 
Note that $(\wHH,\fra^*)$ becomes a $z$-isocrystal (definition recalled in 
Section \ref{LocalShtuka}) whenever $\det(\fra^*)$ is non-zero.

Our second main result ensures the non-degeneracy of the semilinear operator:

\begin{theorem}\label{thm-2-intro}
Let $E$ be an abelian Anderson $A$-module of dimension $d$ and rank $r$ over 
$R$. 
The semilinear operator $\fra^*: \bH(E)_K \map \bH(E)_K$ is a bijection. 

In particular, $(\wHH,\mathfrak{f}^{*})$ forms a $z$-isocrystal.
\end{theorem}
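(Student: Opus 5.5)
Since $\bH(E)_K$ is a finite-dimensional $K$-vector space (Theorem \ref{finite_dim_thm}(4)) and $\fra^*$ is $\phi$-semilinear with $\phi$ injective on $K=l((z))$, it suffices to show that $\fra^*$ is injective on $\bH(E)_K$. Indeed, for a $\phi$-semilinear map injectivity is equivalent to the linearization $\fra^*_{\mathrm{lin}}:\bH(E)_K\otimes_{K,\phi}K\map \bH(E)_K$ being injective, hence bijective by dimension count. Bijectivity of the linearization is exactly $\det(\fra^*)\neq 0$, which is the $z$-isocrystal condition. (If $l$ is not perfect, then bijectivity of the set map should be read through this linearization, as in the definition of $z$-isocrystal.)

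For injectivity I will use the short exact sequence (\ref{Xpshort}) and the diagram (\ref{diag-crys-limit-intro}). The strategy is to understand $\fra^*$ with respect to the filtration $\bXp(E)\subset\bH(E)$. I will work with explicit representatives. By the computations of \cite{PS-1}, $\mathrm{Hom}_A(N^n,\hG)$ is spanned by pullbacks $\Di{}{j}$-type coordinate characters $u\,\tau$-coordinates of the kernel, indexed by $u\in V^\perp$ and by the Witt level. The lateral Frobenius $\fra$ acts on $N^n$ by shifting Witt coordinates, via the relation (\ref{comp}) $\phi\circ i\circ\fra=\phi^{\circ2}\circ i$. On these generators, $\fra^*$ therefore acts by an index shift, twisted by $\phi$ on the coefficients. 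Modulo $i^*\phi^*\bX_{n-1}(E)_\phi$, the quotient is computed by the recursion coming from $\varphi_E(z)=\sum A_i\tau^i$. In this way I will write the matrix of $\fra^*$ on a basis of $\bH(E)$, in the spirit of the rank-two Drinfeld computation in \cite{PS-3}.

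The main step is to show that this matrix has nonzero determinant in $K$. I plan to compute it modulo the filtration. First, $\bXp(E)$ is stable under $\fra^*$ up to the image of $\bH$. The action on the graded pieces $\bXp(E)_K$ (rank $h$) and $\mathbf{I}(E)_K\subset \mathrm{Ext}_A(E,\hG)_K$ is controlled as follows. On $\mathbf{I}(E)$, it is controlled by the $\tau$-action on extensions, given by the coefficients $A_i$, with leading term $A_s$ invertible since $E$ is abelian/pure. On the primitive part, it is controlled by $\Upsilon$, which is an isomorphism after $\otimes K$ by Theorem \ref{thm-1-intro}, together with the factor $\pi I+V$. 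Because $z=\pi$ is invertible in $K$ and $V$ is nilpotent, the diagonal blocks should come out as products of $\phi$-twists of $\pi$ and of $A_s$, which are invertible over $K$. A block-triangular determinant argument would then give $\det\fra^*\neq0$.

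The hardest part will be justifying that the block structure is really triangular. One must check that $\fra^*$ maps the image of $\bXp$ into itself modulo terms that are harmless after inverting $\pi$, and that the induced map on $\mathbf{I}(E)_K$ is injective. I would first attempt the latter via an alternative route: show that any $\Psi\in\mathrm{Hom}(N^n,\hG)$ with $\fra^*\Psi\in i^*\phi^*\bX$ must itself, after multiplying by a power of $\pi$, lie in $i^*\phi^*\bX$. The argument would extend $\fra^*\Psi$ to $J^{n}E$ and use (\ref{comp}) to descend it to an extension of $\Psi$ through $\phi$. Should this not close, I would fall back on the explicit matrix computation.
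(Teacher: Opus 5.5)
Your reduction to injectivity via the linearization is fine. The core mechanism, however, fails. The filtration $\bXp(E)_K\subset\bH(E)_K$ is not $\fra^*$-stable, so there is no block-triangular structure to exploit.

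\textbf{Why the filtration is not stable.} By Proposition \ref{diff}(1), $\fra^*(i^*\Theta)=i^*\phi^*\Theta-\gamma_\Theta\cdot\Psi_1$. So in $\bH(E)_K$ the class of $i^*\Theta$ goes to $-[\gamma_\Theta\cdot\Psi_1]$, which points in the $\Psi_1$-direction, away from the primitive part.
\begin{itemize}
\item For a Drinfeld module with splitting number $m\ge 2$, work in the basis $\{i^*\Theta,\Psi_1,\dots,\Psi_{m-1}\}$. There \eqref{franew} gives $\fra^*(i^*\Theta)=\gamma\Psi_1$. So the ``diagonal block'' on $\bXp$ is $0$, and no induced map on $\mathbf{I}(E)_K$ exists.
\item The paper's weak-admissibility argument uses exactly the fact that $\langle i^*\Theta\rangle$ is not stable.
\item In general, once $\Gamma$ is invertible, $\fra^*$-stability of $\bXp(E)_K$ would force every $[\Psi_{ij}]$ into $\bXp(E)_K$, i.e.\ $\bXp(E)_K=\bH(E)_K$. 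In that case your decomposition is vacuous.
\end{itemize}

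\textbf{The diagonal blocks.} Your proposed blocks are also unsupported. $A_s$ need not be invertible for an abelian Anderson module; a direct sum of Drinfeld modules of different ranks is a counterexample. Nothing in the setup relates $\fra^*$ on $\bH(E)$ to $A_s$ or to $\pi I+V$.

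\textbf{What the paper does.} The paper works in the basis $\Psi_{ij}$ of Lemma \ref{ind-psi}, on which $\fra^*$ is the shift $\Psi_i\mapsto\Psi_{i+1}$. The relations
$i^*\phi^*\tTheta=\Lambda_{\mup}\Psi_{\mup+1}-\Lambda_{\mup-1}\Psi_{\mup}-\cdots-\Lambda_1\Psi_2+\Gamma\Psi_1$
then give a block companion matrix with $\det[\fra^*]=\pm\det(\Lambda_{\mup}^{-1}\Gamma)$.
\begin{itemize}
\item $\Lambda_{\mup}$ is invertible by Proposition \ref{Lambdamu}.
\item $\Gamma$ is invertible by \eqref{phi-up4}, ultimately because $a_0\neq 0$ in the Frobenius polynomial of $\oE$.
\end{itemize}
So $\Upsilon$ enters as the \emph{corner} of a companion matrix, not as a diagonal block.

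\textbf{Your fallback.} As stated, it is just injectivity restated, and \eqref{comp} cannot close it. Relation \eqref{comp} yields Proposition \ref{diff}(2), i.e.\ that $\phi^*$-pullbacks have $\gamma=0$; you need the converse. Here is how to close it.
\begin{enumerate}
\item Use that $u^*$ commutes with $\fra^*$, $i^*$ and $\phi^*$. After pulling back, suppose $\fra^*\Psi=i^*\phi^*\Theta$ with $\Psi\in\bX(N^n)_K$ and $\Theta\in\bX_n(E)_K$.
\item Proposition \ref{diff}(1) gives $\fra^*(\Psi-i^*\Theta)=\gamma_\Theta\cdot\Psi_1$.
\item By Lemma \ref{ind-psi}, $\fra^*$ is the injective shift $\sum c_{ij}\Psi_{ij}\mapsto\sum\phi(c_{ij})\Psi_{(i+1)j}$. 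Its image meets the span of the $\Psi_{1j}$ trivially. Hence $\gamma_\Theta=0$ and $\Psi=i^*\Theta$.
\item Then $\Upsilon[\Theta]=\gamma_\Theta/\pi=0$, so $[\Theta]=0$ in $\bXp(E)_K$ by Theorem \ref{isoupsilon}.
\item Therefore $[\Psi]$, being the image of $[\Theta]$ under $\bXp(E)_K\to\bH(E)_K$, vanishes.
\end{enumerate}
The same computation with $K$-linear combinations shows the linearization is injective, which covers your imperfect-$l$ caveat.

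The missing idea is therefore twofold. First, compare $\Psi_1$-coefficients to force $\gamma_\Theta=0$. Second, use injectivity of $\Upsilon_K$ to conclude that $\Theta$ is non-primitive. This route avoids the companion matrix and Proposition \ref{Lambdamu}, but it rests on the same essential input as the paper, namely Theorem \ref{isoupsilon}.
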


Let $e=\mathrm{ord}_{z}\left(\mathrm{det}(\mathfrak{f}^{*})\right).$ Recall that $\XX_{\mathrm{prim}}(E)=\langle \Theta_{1},\ldots,\Theta_{h} \rangle$, where the $\Theta_i$ are primitive characters of $E$. Extending this basis to $\wHH$, we obtain $\wHH=\langle i^{*}\Theta_{1},\ldots,i^{*}\Theta_{h},\Psi_{i_1},\dots , \Psi_{i_m}\rangle$ for certain $\Psi_{i_j}\in \bH(E)$. We define the Hodge--Pink structure on $\wHH$ as the following submodule of $\phi^*\bH(E) \otimes K((z-\pi))$:
\begin{align*}
	\qH:=\phi^{*}\langle(z-\pi)^{-e}i^{*}\Theta_{1},\ldots,(z-\pi)^{-e}i^{*}\Theta_{h},\Psi_{i_1},\dots \Psi_{i_m}\rangle\otimes_{l((z))} K[[z-\pi]].
\end{align*}

The details of the above construction of the Hodge-Pink structure are
elaborated in Section \ref{HP_delta}.

\begin{corollary} The object $(\wHH, \qH,\fra^*)$ is a $z$-isocrystal with Hodge--Pink structure.
\end{corollary}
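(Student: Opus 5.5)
The plan is to check the axioms of a $z$-isocrystal with Hodge--Pink structure (in the sense of \cite{GL2011, Hartl 2011, HJ 2020}, recalled in Section \ref{LocalShtuka}) one at a time against the construction above.

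\textbf{The isocrystal axiom.} We first need $(\wHH,\fra^*)$ to be a $z$-isocrystal. Concretely, $\wHH$ must be a finite-dimensional $l((z))$-vector space, and the linearization $\phi^*\wHH \to \wHH$ of $\fra^*$ must be an isomorphism.
\begin{itemize}
\item Finiteness follows from Theorem \ref{finite_dim_thm}(4): $\bH(E)$ is free of finite rank over $R \simeq l[[z]]$.
\item Bijectivity of $\fra^*$ is exactly Theorem \ref{thm-2-intro}.
\item In particular $\det(\fra^*)\neq 0$, so $e=\ord_z(\det \fra^*)$ is a well-defined non-negative integer. It is non-negative because $\fra^*$ preserves the lattice $\bH(E)$.
\end{itemize}

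\textbf{The Hodge--Pink lattice axiom.} It remains to show that $\qH$ is a Hodge--Pink lattice, i.e.\ a full $K[[z-\pi]]$-lattice in
\[
\phi^*\wHH\otimes_{l((z))} K((z-\pi)).
\]
This requires a meaningful inclusion $l((z)) \hookrightarrow K[[z-\pi]]$, sending $z \mapsto \pi + (z-\pi)$. Since $\pi\in R$ is a non-zero element of the maximal ideal, every element of $l((z))$ expands as a convergent series in $K[[z-\pi]]$. Units of $l[[z]]$ go to units, and $z$ goes to $\pi+(z-\pi)$, which is a unit of $K[[z-\pi]]$ because $\pi\in K^\times$. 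So the map extends to $l((z))$.

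Next, the elements
\[
i^*\Theta_1,\dots,i^*\Theta_h,\Psi_{i_1},\dots,\Psi_{i_m}
\]
form an $l((z))$-basis of $\wHH$. This rests on two inputs:
\begin{itemize}
\item the injectivity of $\bXp(E)\to\bH(E)$ from the exact sequence (\ref{Xpshort}), together with the freeness in Theorem \ref{finite_dim_thm}(3);
\item the choice of the $\Psi_{i_j}$ as a basis extension.
\end{itemize}
Applying $\phi^*$ (base change along $\phi$, which is flat since $\phi$ is a field endomorphism on $l((z))$) preserves bases. Hence
\[
\phi^*(i^*\Theta_1),\dots,\phi^*(\Psi_{i_m})
\]
is a basis of $\phi^*\wHH$.

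Multiplying the first $h$ basis vectors by $(z-\pi)^{-e}$ changes them only by units of $K((z-\pi))$. The resulting family is therefore still a $K((z-\pi))$-basis of $\phi^*\wHH\otimes K((z-\pi))$. Its $K[[z-\pi]]$-span $\qH$ is thus a free $K[[z-\pi]]$-submodule of full rank, i.e.\ a lattice, as required.

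\textbf{Main obstacle.} The delicate point is whether $\qH$ is independent of the choice of basis extension $\Psi_{i_j}$, and whether it satisfies the normalisation conventions of \cite{HJ 2020}.

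Dependence on the choice of the $\Psi_{i_j}$ does not affect the definition of a Hodge--Pink structure, so for the corollary it suffices to fix one choice. If the definition in Section \ref{LocalShtuka} also demands the standard bound relating Hodge weights to $\ord_z\det\fra^*$, I would check this directly. Writing $\mathfrak{p} := \phi^*\wHH\otimes K[[z-\pi]]$ for the tautological lattice, the relative position of $\qH$ and $\mathfrak{p}$ has elementary divisors $(z-\pi)^{-e}$ with multiplicity $h$ and $1$ with multiplicity $m$. Whatever bound that convention imposes can then be read off directly from these elementary divisors.
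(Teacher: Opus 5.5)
Your proposal is correct and follows the paper's route. The paper deduces the corollary directly from Theorem \ref{bijfra}, whose proof shows $\det[\fra^*]\neq 0$ in (\ref{det-fra}), which is exactly what you need for the linearization $\phi^*\wHH\to\wHH$ to be an isomorphism; it then treats the lattice property of $\qH$ as immediate from the definition, and your explicit check (via $l((z))\hookrightarrow K[[z-\pi]]$ and the adapted basis rescaled by $(z-\pi)^{-e}$) just spells out what the paper leaves implicit.
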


By the positive equal characteristic analog of the Fontaine functor developed
by Genestier and Lafforgue \cite{GL2011} and Hartl \cite{Hartl 2011}, 
the weakly admissible $z$-isocrystals correspond to crystalline $z$-adic 
Galois representations. 
In this context, our next main result is the following: 

\begin{theorem}\label{thm-3-intro} 
Let $E$ be a Drinfeld module of rank $r$ over $R$.
\begin{itemize}
\item[(1)] Then the $z$-isocrystal with Hodge--Pink structure $(\wHH,\qH,\mathfrak{f}^{*})$ is weakly admissible. 
\item[(2)] In particular, $(\wHH, \qH, \fra^*)$ determines a crystalline Galois representation
$$\rho_{\wHH}: \mathrm{Gal}(K^{\mathrm{sep}}/K) \longrightarrow \mathrm{GL}_m(\FF_{\hq}((z)))$$
where $m = \dim_{l((z))} \wHH$ is the splitting number of $E$.
\end{itemize}
\end{theorem}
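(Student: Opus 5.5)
We will prove part (1) by checking the weak admissibility criterion of Genestier--Lafforgue and Hartl directly: for every $\fra^*$-stable $l((z))$-subspace $W\subseteq\wHH$, with the induced Hodge--Pink lattice $\mfrak{q}_W=\mfrak{q}_{\wHH}\cap(\phi^*W\otimes K((z-\pi)))$, we need the Hodge degree $t_H(W)$ to be at most the Newton degree $t_N(W)=\ord_z\det(\fra^*|_W)$, with equality for $W=\wHH$. Part (2) then follows from the equal-characteristic Fontaine functor recalled in Section \ref{weak-admissible}. That functor sends a weakly admissible $z$-isocrystal with Hodge--Pink structure of dimension $m$ to a crystalline representation on an $\FF_{\hq}((z))$-vector space of the same dimension.

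\textbf{Step 1: $W=\wHH$.} For a Drinfeld module $d=1$ and $V=0$, so $h=1$. Then $\bXp(E)$ is generated by a single primitive character $\Theta_1$, and $\wHH=\langle i^*\Theta_1,\Psi_{i_1},\dots,\Psi_{i_{m-1}}\rangle$. By construction, $\qH$ differs from the tautological lattice $\phi^*\bH(E)\otimes K[[z-\pi]]$ only in the $i^*\Theta_1$ direction, where it is twisted by $(z-\pi)^{-e}$. Hence $t_H(\wHH)=e$. By definition $e=\ord_z\det\fra^*=t_N(\wHH)$, which gives the required equality.

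\textbf{Step 2: proper subobjects.} Let $0\neq W\subsetneq\wHH$ be $\fra^*$-stable. Write $\bar W$ for the image of $W$ in $\mathbf I(E)_K$.
\begin{itemize}
\item \emph{Case $i^*\Theta_1\notin W$.} Then $\mfrak{q}_W$ is the tautological lattice, so $t_H(W)=0$. It remains to show $t_N(W)\ge 0$. We will show that $\fra^*$ preserves the lattice $\bH(E)$; then $\fra^*|_{W\cap\bH(E)}$ is integral and its determinant has nonnegative $z$-adic order. Integrality should follow from the explicit description of $\fra^*$ on $\Hom(N^n,\hG)$ from \cite{BS b,PS-1}, where $\fra$ is a morphism of formal schemes over $R$.
\item \emph{Case $i^*\Theta_1\in W$.} Then $t_H(W)=e$. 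The quotient $\wHH/W$ is $\fra^*$-stable and is a quotient of $\mathbf I(E)_K$. Using additivity of $t_N$ and the integrality of Step 2(a) applied to the quotient lattice, we get $t_N(\wHH/W)\ge0$. Hence $t_N(W)=e-t_N(\wHH/W)\le e$.
\end{itemize}
The second case therefore gives the inequality in the wrong direction. We must instead show that no proper $\fra^*$-stable $W$ contains $i^*\Theta_1$, or that any such $W$ has $t_N(\wHH/W)=0$.

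\textbf{Step 3: the main obstacle.} We need a structural fact: the quotient $\wHH/W$ has slope zero, i.e.\ $\fra^*$ is bijective on an integral lattice of it. Equivalently, all the ``$z$-divisibility'' of $\det\fra^*$ is concentrated on the $\Theta_1$ direction. We will extract this from the explicit formula for $\fra^*$ on $\bH(E)$ in the Drinfeld case, available from \cite{BS b} and \cite{PS-3}. There $\fra^*\Psi_j$ is expressed via the coefficients of $\vp_E(z)$, and the contributions of $\Psi_{i_1},\dots,\Psi_{i_{m-1}}$ form a companion-type matrix whose constant term is a unit. Concretely, we would work modulo the line $\langle i^*\Theta_1\rangle$ and show that the induced matrix on $\mathbf I(E)_K$ lies in $\mathrm{GL}_{m-1}(R)$. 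This is expected because $\mathrm{Ext}_A(E,\hG)$ is ``étale-like'' under $\fra^*$, the only non-unit contribution coming from the relation $\fra^*i^*\Theta_1$ involving $\pi$. Once this holds, every $\fra^*$-stable quotient of $\mathbf I(E)_K$ has $t_N=0$, and the inequality $t_H(W)\le t_N(W)$ follows in both cases. Verifying the unit-determinant claim on $\mathbf I(E)$ is the step we expect to be hardest.
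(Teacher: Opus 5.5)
\textbf{There is a genuine gap in the case of a proper $\fra^*$-stable $W$ containing $i^*\Theta_1$.} Your Steps~1 and~2(a) are fine, but the argument does not close: this case is deferred to Step~3, and Step~3 is not a viable plan.

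The line $\bXp(E)_K=\langle i^*\Theta_1\rangle$ is not $\fra^*$-stable, since by \eqref{franew} $\fra^*(i^*\Theta_1)=\gamma\Psi_1$. Hence $\fra^*$ induces no operator on $\mathbf I(E)_K=\wHH/\langle i^*\Theta_1\rangle$, and there is no ``induced matrix on $\mathbf I(E)_K$'' to place in $\mathrm{GL}_{m-1}(R)$. The nearest substitute is to compress $\fra^*$ to $\langle\Psi_1,\dots,\Psi_{m-1}\rangle$ along $i^*\Theta_1$. That gives a companion matrix of determinant $\pm\lambda_1$, and nothing forces $\lambda_1$ to be a unit (for $m=r\geq 2$, Corollary~\ref{comm-frob} gives $\lambda_1=-a_1$, a Weil-polynomial coefficient). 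So the ``\'etale-like'' heuristic for $\mathrm{Ext}_A(E,\hG)$ is unsupported. As written, the inequality $t_H(W)\le t_N(W)$ is unproved exactly in the case you flagged.

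The missing idea is the first alternative you wrote down and then set aside: $i^*\Theta_1$ is a cyclic vector for $\fra^*$. Suppose $W$ is $\fra^*$-stable and contains $i^*\Theta_1$. Then $\gamma\Psi_1\in W$. In the basis $\{i^*\Theta_1,\Psi_1,\dots,\Psi_{m-1}\}$ one has $\det\fra^*=\pm\gamma$, which is nonzero by Theorem~\ref{bijfra}; since $W$ is an $l((z))$-subspace, $\Psi_1\in W$. Then $\Psi_{j+1}=\fra^*\Psi_j\in W$ for $1\le j\le m-2$, so $W=\wHH$. Thus your case (b) is empty for proper $W$. Every proper stable $W$ has $\mfrak{q}_W=\mfrak{p}_W$ and $t_H(W)=0$, and your Step~2(a) finishes the proof; no slope statement about quotients is needed.

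The paper's proof rests on the same observation, $\fra^*(i^*\Theta)=\gamma\Psi_1$ with $\gamma\neq0$. It only writes out $D'\neq\langle i^*\Theta\rangle$, but the iteration above is what actually gives $i^*\Theta\notin D'$.

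Where you genuinely differ is in proving $t_N(W)\ge0$. The paper extends scalars to $L_\infty$ and combines the Dieudonn\'e--Manin decomposition with Lemma~\ref{eigenint} on eigenvalues (Lemma~\ref{det_pos}). You instead use that $W\cap\bH(E)$ is an $\fra^*$-stable lattice in $W$. This holds because $\fra^*$ is a $\phi$-semilinear endomorphism of $\bH(E)$ by construction; you do not need $\fra$ to be defined over $R$, which it is not. That gives an integral matrix, hence $\ord_z\det(\fra^*|_W)\ge0$. A change of basis only multiplies the determinant by some $\phi(u)u^{-1}$, which has order $0$ by Lemma~\ref{eqval}. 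This route is shorter than the paper's and entirely valid.

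Part~(2) then follows as you say, through Hartl's theorem that weakly admissible implies admissible.
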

Recall from Section 9B of \cite{BS b} that for a Drinfeld module $E$, $\bH(E)$ is a free $R$-module of rank $m$, which is called the splitting number of $E$. To investigate how these Galois representations compare with those arising from the classical Tate modules associated with Drinfeld modules, we show that the representations coincide when $E$ has rank 1 (cf.\ subsection \ref{rk-1-com}  ). For higher rank cases, we prove the following relation between the characteristic
polynomials:

\begin{theorem}\label{thm-4-intro} 
Let $E/R$ be a Drinfeld module of rank $r$ with splitting number $r$. Then the characteristic polynomial of the linear operator $\mfrak{f}^{*}$ on $\bH(E)_K$ equals the Weil polynomial of the special fiber $\overline{E}.$
\end{theorem}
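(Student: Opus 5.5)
The plan is to compute $\fra^*$ explicitly on a basis of $\bH(E)$ built from the coefficients of the Drinfeld module. Then I compare its characteristic polynomial with the Frobenius action on the reduction $\oE$ over $l$.

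Write $\vp_E(z)=\pi+\sum_{i=1}^{r}a_i\tau^i$ with $a_r$ a unit. Since the splitting number is $r$, the computations of Section 9 of \cite{BS b} (as extended in \cite{PS-1}) give $\bH(E)\simeq R^r$. The basis can be taken to be the images of the coordinate characters $i^*\d^{j}$ on $N^n$ ($j=1,\dots,r$), where $\d$ is the $\pi$-derivation. The first step is to write the lateral Frobenius $\fra:N^n\to N^{n-1}$ in these Buium--Joyal coordinates; on Witt coordinates it is a shift twisted by $\phi$. Using relation (\ref{comp}) and the description of $\bH(E)$ as a quotient by $i^*\phi^*\bX_{n-1}(E)_\phi$, we reduce $\fra^*(i^*\d^{r})$ modulo the relation coming from $\vp_E(z)$. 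The $A$-linearity of characters forces this relation: roughly, $\phi^{r}$ is expressed through $\pi$ and the lower $\phi^j$ via the coefficients $a_j$ and their $\d$-derivatives.

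The upshot I aim for is the following. In the basis $\{i^*\d^{j}\}$, the matrix of $\fra^*$ is a companion-type matrix: shifts on the subdiagonal and, in the last column, entries $c_j\in R$ determined by $a_1,\dots,a_r$. Its determinant equals $\pi$ times a unit, matching $e=\mathrm{ord}_z\det\fra^*=1$ for a Drinfeld module. Once the companion form is established, the characteristic polynomial is read off as $X^r-\sum c_jX^j$ (suitably semilinearly interpreted). It is a genuine linear polynomial once we pass to the $\hq$-power (more precisely, the $[l:\FF_{\hq}]$-th iterate) or restrict to the case where $\phi$ is trivial on the relevant coefficients.

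Next I compute the Weil polynomial of $\oE$. Over $l$, the $\hq^{\,n}$-Frobenius $\tau^{n}$ (where $l=\FF_{\hq^n}$ or more generally the relevant finite subfield) lies in $\End(\oE)$. Its characteristic polynomial on the $z$-adic Tate module, or equivalently on the Dieudonné module / $z$-isocrystal $\bH^*_{\mathrm{dR}}$ of $\oE$, is obtained from Gekeler's formula. Concretely, one solves $\tau^{n}$ in terms of $\vp_{\oE}(z)=\sum\bar a_i\tau^i$ inside the division algebra $\End(\oE)\otimes K$. I would then identify both polynomials through the comparison map $\Phi:\bH(E)\to\bH^*_{\mathrm{dR}}(E)$ of (\ref{diag-crys-limit-intro}). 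By Theorem \ref{thm-1-intro}, $\Phi$ is an isomorphism after $\otimes K$ in rank $r$, so it suffices to show that $\Phi$ intertwines $\fra^*$ with the crystalline Frobenius on $\bH^*_{\mathrm{dR}}$. Reducing mod $\pi$, this crystalline Frobenius is induced by $\tau$ on $\oE$, and the resulting characteristic polynomial is the Weil polynomial.

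The main obstacle is the intertwining: showing that $\Phi\circ\fra^*=F\circ\Phi$, where $F$ is the Frobenius on de Rham cohomology transported from the special fiber. Since $\fra$ is defined via Witt vectors and $F$ via $\tau$, I would first check this on the generators $i^*\d^{j}$, using the explicit vectorial extension constructed in \cite{PS-1}. Failing that, I would fall back on a direct comparison of the two explicit companion matrices, checking that their characteristic polynomials agree coefficientwise modulo $\pi$. This agreement would lift because both polynomials have coefficients in $A$, by Theorem \ref{finite_dim_thm} and the integrality of Weil polynomials.
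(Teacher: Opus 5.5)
\textbf{There is a genuine gap, at exactly the step you flag as the main obstacle, and neither of your two routes closes it.}

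\emph{The main route.} The intertwining $\Phi\circ\fra^*=F\circ\Phi$ needs a Frobenius $F$ on $\Hdr(E)$ over $R$ that is compatible with $\overline{F}$ on $\oE$. Nothing in the paper or in \cite{PS-1} supplies one. The map $\Phi$ comes from vectorial extensions and carries no Frobenius structure. The paper only \emph{transports} $\fra^*$ to $\Hdr(E)_K$, and it lists a crystalline comparison of this kind as future work. So your main route assumes an unproved comparison theorem.

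\emph{The fallback.} It fails for two reasons.
\begin{itemize}
\item Two polynomials agreeing modulo $\pi$ need not be equal, even if their coefficients lie in $A$. Moreover, the constant term $\pm\det\fra^*$ is divisible by $\pi$, so the mod-$\pi$ comparison discards exactly that information.
\item Nothing a priori puts the coefficients of the characteristic polynomial of $\fra^*$ in $A$. Theorem \ref{finite_dim_thm} only gives freeness over $R$. That $\gamma$ and the $\lam_i$ lie in $A$ is a \emph{consequence} of the theorem, not an input.
\end{itemize}

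\emph{The explicit computation.} It is set up on the wrong objects. The coordinate functions $i^*\d^j$ on $N^n$ are not $A$-linear, so they are not elements of $\bX(N^n)$. The basis of $\bH(E)$ is $\Psi_j=\Psi_1\circ\fra^{j-1}$, where $\Psi_1$ is a logarithm-type series. The quantities $\lam_i,\gamma$ have no closed form in the coefficients of $\vp_E(z)$ that you could match against Gekeler's formula. Your claim $e=1$ is also unjustified.

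\textbf{The missing idea:} lift the Weil relation directly into delta characters, with no comparison theorem.
\begin{itemize}
\item Since $P(\overline{F})=0$ on $\oE$, the $A$-linear map $P(\phi)\colon J^rE\to E$ lands in the kernel of reduction.
\item That kernel is the image of the injective map $\phi\circ\iota\colon N^1\to E$, which is multiplication by $\pi$ in coordinates. Hence $P(\phi)=\phi\circ\iota\circ\mf g$ for some $\mf g\colon J^rE\to N^1$.
\item Iterating \eqref{comp} gives $\mf g\circ\phi\circ\iota=P(\fra)$ on $N^{r+1}$. So $\Psi_1\circ\mf g\in\bX_r(E)$, and $\iota^*\phi^*(\Psi_1\circ\mf g)=\Psi_{r+1}+a_{r-1}\Psi_r+\cdots+a_0\Psi_1$.
\item The splitting-number hypothesis gives $\bX_r(E)=R\,\Theta_r$. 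Now compare the $\Psi_{r+1}$-coefficient with $\iota^*\phi^*\Theta_r=\Psi_{r+1}-\phi(\lam_{r-1})\Psi_r-\cdots-\phi(\lam_1)\Psi_2+\gamma\Psi_1$. This forces $\Psi_1\circ\mf g=\Theta_r$, hence $\gamma=a_0$ and $\phi(\lam_i)=-a_i$.
\item Since $\iota^*\phi^*\Theta_r$ vanishes in $\bH(E)$, $\fra^*$ acts on $\Psi_1,\dots,\Psi_r$ by the companion matrix of $P$.
\item The entries of that matrix lie in $A$ and are fixed by $\phi$. This also removes your semilinearity worry, so the characteristic polynomial is exactly the Weil polynomial.
\end{itemize}
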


Beyond its intrinsic interest, Theorem \ref{thm-4-intro} hints at a deeper geometric connection when $E$ is a non-CL Drinfeld module of rank $2$, mirroring the theory of elliptic curves over mixed characteristics. In this scenario, the splitting number of $E$ is $2$, and by Theorem 1.1 in \cite{PS-3}, $\bH(E)_K$ is isomorphic to $\Hdr(E)_K$. Thus we have a natural Frobenius on $\Hdr(E)_K$ transported from $\bH(E)_K.$ In this context, Theorem \ref{thm-4-intro} implies that the characteristic polynomial of the induced Frobenius operator on $\Hdr(E)_K$ exactly matches the Weil polynomial of the special fiber.

Looking forward, the development of a suitable crystalline theory of universal vectorial extensions for Anderson modules—analogous to the Mazur--Messing framework \cite{MazMess}—will enable a general comparison theorem between the delta isocrystal and the  crystalline cohomology that would be compatible with their respective Hodge--Pink structures as done in \cite{Pandit26}.

{\bf Acknowledgements.} The first named author was partially supported by Vikram Sarabhai Research Fellowship at IIT Gandhinagar and Postdoctoral fellowship at IISER Mohali when most of the work was done, and Royal Society research fellowship 
 grant RF$\backslash $ERE$\backslash $231161.
The second author was partially supported by ANRF/ARGM/2025/001294/MTR.

\section{$\pi$-typical Witt vectors, $A$-module $\pi$-formal schemes and 
 arithmetic jet spaces}
In this section we recall some of the basic properties of $\pi$-typical Witt
vectors and  $A$-module 
$\pi$-formal schemes and the theory of delta characters associated to them.

\cblue
Let $q=p^{h}$ be a fixed prime power and $\mathcal{C}$ be a projective, geometrically connected, smooth curve over $\mathbb{F}_{q}$. Fix an $\mathbb{F}_{q}$-rational point $\infty$ on $\mathcal{C}$. Let $A$ be the ring of functions regular outside $\infty$. Fix a maximal ideal $\mathfrak{p}$ of the Dedekind domain
$A$ and $z\in \mathfrak{p}\setminus \mathfrak{p}^{2}$ be an element of degree $f$ such that $A$ is a separable extension of $\FF_{q}[z]$. Note that it is always possible to find such uniformizer for any $A$ (cf. page 18, Proposition $1.4$ in \cite{Silverman}). Let $\hat{A}$  be the $\mathfrak{p}$-adic completion  
of $A$ and $\pi$ be the image of $z$ in $\hat{A}$. Then $\pi$ generates the maximal ideal $\hat{\mathfrak{p}}$. Let $k$ denote the residue field $A/\mathfrak{p}$ which is finite and let $\hq=q^{f}$ be its cardinality. Note that the quotient map $\hat{A}\longrightarrow \FF_{\hq}$ has a unique section and thus $\hat{A}$ is not only an $\mathbb{F}_{q}$-algebra but also an $\FF_{\hq}$-algebra.
Let $\hat{A}$  be the $\mathfrak{p}$-adic completion  
of $A$ and $\pi$ be the image of $z$ in $\hat{A}$. 

For any $A$-algebra B and a $B$-algebra $C$, we define a {\it $\pi$-derivation}
$\delta$ as a set-theoretic map $\delta: B \map C$ that satisfies for all
$x,y \in B$, 
\begin{enumerate}
\item $\delta (1) = 0$ 

\item $\delta (x + y) = \delta x + \delta y$ 

\item $\delta (xy) = u(x)^{\hat{q}}\delta y + u(y)^{\hat{q}} \delta x +
\pi \delta x \delta y$,
\end{enumerate}
where $u:B \map C$ is the structure map.
Note that the pair $(u,\d)$ as above is uniquely characterised by a ring map 
from $B$ to $W_1(C)$, where $W_1(C)$ is the $\pi$-typical Witt vectors
of length $2$. Also note that since $R$ is of prime characteristic, 
the operator $\d$ is additive. However $\delta$ is not a ring map in general.

The associated {\it $\hat{q}$-power lift of Frobenius} map $\phi: B \map C$ is 
defined as $\phi(x):= u(x)^{\hat{q}} + \pi \delta x$ for all $x \in B$ 
and is easy to see that $\phi$ is a ring homomorphism.

Fix a flat $\hat{A}$-algebra $R$ which is a
$\pi$-adically complete discrete valuation ring. 
Let $R$ have a lift of Frobenius which is an $\hat{A}$-algebra
endomorphism $\phi:R\map R$ that satisfies $\phi(x) \equiv x^{\hat{q}}
\bmod (\mfrak{p}R)$. 
Associated to such a $\phi$, consider the unique $\pi$-derivation $\d:R \map
R$ given by $\d x = \frac{\phi(x)- x^{\hq}}{\pi}$. 

\subsection{$\pi$-typical Witt vectors}
We recall some of the basic theory of $\pi$-typical Witt vectors.
The general theory of Witt vectors over Dedekind domains with finite residue
fields were introduced by Borger \cite{bor11a}. The theory over local fields
of characteristic $p$ was introduced earlier by Drinfeld \cite{drin76}.

For an $R$-algebra $B$ with structure map $R \stk{f}{\map} B$, for all 
$n \geq 0$ let $B^{\phi^n}$ be the 
$R$-algebra with the structure map $R \stk{\phi^n}{\map} R \stk{f}{\map} B$.
We define the {\it ghost rings} $\prod_\phi^n B := B \times B^\phi \times 
\cdots \times B^{\phi^n}$ and $\prod_\phi^\infty B := B \times B^\phi \times 
\cdots $. For all $n \geq 1$, consider the {\it restriction} or 
{\it truncation} map on the ghost rings as $T_w : \prod_\phi^n B \map 
\prod_\phi^{n-1} B$ given by $T_w(w_0,\dots , w_n) := (w_0,\dots, w_{n-1})$.

Also consider the left shift {\it Frobenius} operators $F_w: \prod_\phi^n B
\map \prod_\phi^{n-1}B$ given by $F_w(w_0,\dots, w_n) = (w_1,\dots, w_n)$. 
Note that $T_w$ is an $R$-algebra map whereas $F_w$ lies over the fixed
Frobenius endomorphism $\phi$ of $R$.

Define as a set $W_n(B) := B^{n+1}$, and the set theoretic map
$w: W_n(B) \map \prod_\phi^n B$ by $w(x_0,\dots, x_n) := (w_0,\dots ,w_n)$
where for all $i \geq 0$,
$$
w_i = x_0^{{\hat{q}}^i} + \pi x_1^{{\hq}^{i-1}}+ \cdots + \pi^i x_i
$$
are the {\it Witt polynomials} and $w$ is known as the {\it ghost} map.
We define the ring of $\pi$-typical Witt vectors of length $n+1$ as the 
following analogous theorem to the $p$-typical case (also cf. pp. 171-192 in \cite{Mum-66}).

\begin{theorem}
For all $n \geq 0$, there exists a unique functorial $R$-algebra 
structure on $W_n(B)$ such that $w$ is a natural transformation of 
functors of $R$-algebras.
\end{theorem}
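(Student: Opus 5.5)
We follow the classical Witt-vector argument (as in Mumford's lectures): construct the ring operations on $W_n(B)$ first when the ghost map is injective, then transport them to all $B$ by universality.

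\textbf{Step 1: universal polynomials.} Work over the universal algebra $P=R[x_0,\dots,x_n,y_0,\dots,y_n]$. We claim there are unique polynomials $S_i,M_i\in P$, depending only on $x_0,\dots,x_i,y_0,\dots,y_i$, with
\[
w_i(S_0,\dots,S_i)=w_i(\mathbf x)+w_i(\mathbf y),\qquad w_i(M_0,\dots,M_i)=w_i(\mathbf x)\,w_i(\mathbf y).
\]
The analogous statement holds for negation, for $0$ and $1$, and for the $R$-scalar action $r\mapsto(r,0,\dots,0)$, where the ghost components of $r$ are $(r,\phi(r),\dots)$ in $\prod_\phi^n B$. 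Since $R$ is $\pi$-torsion free, $P$ is as well. So we can solve recursively:
\[
S_i=\pi^{-i}\Bigl(w_i(\mathbf x)+w_i(\mathbf y)-\sum_{j<i}\pi^jS_j^{\hq^{\,i-j}}\Bigr)\in P[1/\pi],
\]
and uniqueness is immediate. The remaining task is to show that $S_i$ (and $M_i$) lie in $P$ itself.

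\textbf{Step 2: integrality (the main obstacle).} Here we use the Dwork-type lemma. Equip $P$ with the lift of Frobenius $\Phi$ extending $\phi$ on $R$ and sending $x_j\mapsto x_j^{\hq}$, $y_j\mapsto y_j^{\hq}$. The criterion is: a sequence $(a_0,\dots,a_n)\in\prod_\phi^n P$ lies in the image of $w$ if and only if $a_i\equiv\Phi(a_{i-1})\bmod \pi^iP$ for all $i$.

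The key ingredient is that $c\equiv d\bmod \pi^jP$ implies $c^{\hq}\equiv d^{\hq}\bmod\pi^{j+1}P$. This holds because $\hq$ is a power of the characteristic $p$, so $(c-d)^{\hq}=c^{\hq}-d^{\hq}$. Note that this is even easier than in the mixed characteristic case.

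With this ingredient, induction gives the criterion. Ghost vectors of Witt vectors satisfy the congruence, and the congruence is preserved under sums and products componentwise, since $\Phi$ is a ring map. Hence $S_i,M_i\in P$. The negation, scalar, and unit cases are handled the same way.

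\textbf{Step 3: ring axioms and functoriality.} For an arbitrary $R$-algebra $B$, define the operations on $W_n(B)$ by evaluating $S_i,M_i$. Each ring axiom is a polynomial identity among the universal polynomials. Each such identity holds over a polynomial ring $R[\mathbf x,\mathbf y,\mathbf z]$, because there $w$ is injective (the ring is $\pi$-torsion free) and the ghost side is a ring. The identity then specializes to every $B$.

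Functoriality holds because the operations are defined by polynomials with coefficients in $R$. Naturality of $w$ holds by construction. Uniqueness follows from the same injectivity on torsion-free polynomial algebras, together with functoriality applied to the surjections $R[\mathbf x,\mathbf y]\to B$.
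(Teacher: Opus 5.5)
Your treatment of the ring operations is correct. It is the classical argument the paper itself relies on; the paper gives no proof, only a reference to Mumford. That argument consists of universal sum and product polynomials, a Dwork-type criterion using the characteristic-$p$ congruence $(c-d)^{\hq}=c^{\hq}-d^{\hq}$, and specialisation from $\pi$-torsion-free polynomial algebras for the axioms and for uniqueness.

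\textbf{The error is in the $R$-algebra structure.} This is the one place where the hypothesis on $\phi$ actually matters. You take the structure map to be $r\mapsto(r,0,\dots,0)$. But $w_i(r,0,\dots,0)=r^{\hq^i}$, not $\phi^i(r)$, and these differ: for example $\phi(\pi)=\pi\neq\pi^{\hq}$. So with your structure, $w\colon W_n(B)\to\prod_\phi^n B$ is not $R$-linear, and the statement is not proved.

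In this equal-characteristic setting the Teichm\"uller map $b\mapsto(b,0,\dots,0)$ is in fact additive, since $a^{\hq^i}+b^{\hq^i}=(a+b)^{\hq^i}$. So it does give \emph{a} functorial $R$-algebra structure. However, $w$ is $R$-linear for it only if the $i$-th ghost factor carries the structure $r\mapsto f(r)^{\hq^i}$, not $f(\phi^i(r))$. Your remark that the scalar case is ``handled the same way'' hides this. In the universal algebra $R[t]$ with $\Phi(t)=t^{\hq}$, the element $t$ has ghost vector $(t,t^{\hq},\dots)$, which again yields the Teichm\"uller vector.

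\textbf{How to repair it.} Apply your criterion to $R$ itself, which is $\pi$-torsion free with Frobenius lift $\phi$ fixing $\pi$.
\begin{itemize}
\item The vector $(r,\phi(r),\dots,\phi^n(r))$ satisfies $a_i=\phi(a_{i-1})$ exactly. So there is a unique $\iota_R(r)\in W_n(R)$ with these ghost components; its Witt coordinates begin $(r,\d r,\dots)$ with $\d r=(\phi(r)-r^{\hq})/\pi$.
\item $\iota_R$ is a ring homomorphism, because $w$ is injective on $W_n(R)$ and $r\mapsto(\phi^i(r))_i$ is a ring map.
\item For an $R$-algebra $f\colon R\to B$, set $\iota_B:=W_n(f)\circ\iota_R$. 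Naturality of $w$ gives $w(\iota_B(r))=(f(\phi^i(r)))_i$, which is exactly the structure map of $\prod_\phi^n B$.
\item Compatibility with $W_n(g)$ for $g\colon B\to B'$ is immediate.
\item For uniqueness, any functorial structure must satisfy $\iota_B=W_n(f)\circ\iota_R$, since $W_n(f)$ is $R$-linear. And $\iota_R$ is forced by injectivity of $w$ on $W_n(R)$.
\end{itemize}
With this structure map in place of $r\mapsto(r,0,\dots,0)$, the rest of your proof goes through unchanged.
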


We now recall some of the important operators on the $\pi$-typical 
Witt vectors: 

$(1)$ The {\it restriction} or {\it truncation} map given by
$T(x_0,\dots,x_n) = (x_0,\dots, x_{n-1})$ and satisfies $w \circ T
= T_w \circ w$. 

$(2)$ The {\it Frobenius} map $F: W_n(B) \map W_{n-1}(B)$ satisfying
$w \circ F = F_w \circ w$ given by 
$$
F(x_0,\dots ,x_n) = (x_0^{\hq} + \pi x_1, \dots ).
$$
As in the case of the ghost side map, $F$ lies over the Frobenius
endomorphism $\phi$ of $R$.

Consider the Witt vectors of infinite length 
$W(B):= \varinjlim_T W_n(B)$ be the inverse limit taken over the
truncation map $T$. 

The map $F$ induces the lift of Frobenius map on 
$W(B)$. In fact there exists a unique delta map $\Delta: W(B) \map W(B)$
such that $F$ is the associated Frobenius map, that is 
$F(x) = x^{\hq} + \pi \Delta (x)$.

The ring $W(B)$ satisfies the following universal property: For any $R$-algebra
$C$ with a $\pi$-derivation $\delta$ on it and an $R$-algebra map $f:C \map B$,
there exists a unique $R$-algebra homomorphism $g: C \map W(B)$ such that 
the diagram 
$$
\xymatrix{
W(B) \ar[d]_T & \\
B & C \ar[l]^f \ar[lu]_g
}
$$
commutes and $g \circ \Delta = g \circ \delta$. Hence $W$ is the right adjoint
of the forgetful functor from $R$-algebras with $\pi$-derivation to $R$-algebra.
Hence via the above universal property of Witt vectors, for any $R$-algebra
$B$, $W(B)$ becomes naturally an $R$-algebra.

For more details, we refer the reader to Section $1$ of \cite{bor11a}. 
This
approach is analogous to that of Joyal in the case of $p$-typical Witt vectors
\cite{joyal}.

Given 
any $\pi$-formal scheme $X$ over $\Spf R$ the $n$-th jet space $J^nX$ is 
functorially defined as 
\begin{align}
\label{jetfuncdef}
J^nX(B) := X(W_n(B))
\end{align}
for all $\pi$-adically complete $R$-algebra $B$. Then $J^nX$ is representable 
by a $\pi$-formal scheme over $\Spf R$ that is obtained by glueing over open
affine covers of $X$- this is analogous to Buium's construction in the 
mixed characteristic case as in \cite{Buium 1995}.
For more details on the representability of the jet functor, the reader may
see \cite{bps} \cite{bor11b}. 

Let $X$ and $Y$ be $\pi$-formal schemes over $S$. We say a pair $(u,\d)$ is a 
{\it prolongation of $\pi$-formal schemes} and we write $Y\stk{(u,\d)}{\map} X$, if $u:Y \map X$ is a 
map of $\pi$-formal $S$-schemes and $\d: \Ou_X \map u_*\Ou_Y$ is a 
$\pi$-derivation making the following diagram commute:
$$\xymatrix{
R \ar[r] & u_*\Ou_Y \\
R \ar[u]^\d \ar[r] &\Ou_X. \ar[u]_\d
}
$$

We now recall the notion of prolongation sequences over 
$\pi$-formal schemes which are the positive characteristic analog of the
ones defined by Buium.
%For a more detailed treatment  we refer to \cite{BS_b,bui95}.
A {\it prolongation sequence of $\pi$-formal schemes} is a sequence 
$$
S \stk{(u_0,\d_0)}{\loongleftarrow} T^0 \stk{(u_1,\d_1)}{\loongleftarrow} 
T^1 \stk{(u_2,\d_2)}{\loongleftarrow} \cdots,
$$
where $T^i \stk{(u_{i+1},\d_{i+1})}{\loongleftarrow} T^{i+1}$ are 
prolongations satisfying
$$
u^*_i \circ \d_{i+1} = \d_i \circ u^*_{i+1}
$$
where $u^*_i$ is the pull-back morphism of sheaves induced by $u_i$
for each $i$. To simplify notations, we shall omit the subscripts on $u_i$
 and $\delta_i$ whenever no confusion is likely to arise.

We will denote a prolongation sequence of $\pi$-formal schemes
as $T^*$ or $\{T^n\}_{n\geq 0}$.
 Prolongation sequences form a category $\mcal{C}_{S^*}$, where a morphism $f:T^*\to U^*$ is 
a family of morphisms $f^n:T^n\to U^n$ commuting with both the $u$ and $\d$, in the evident sense.

Define $S^*=\{S^i\}_{i=0}^\infty$ to be the prolongation sequence given by 
$S^i := \Spf R$ $u_i := \mathbbm{1}$ and $\d_i = \d$ is the fixed 
$\pi$-derivation on $R$ for all $i$. This is the final object in $\mcal{C}_{S^*}.$

The system of $\pi$-formal schemes $J^*X:=\{J^nX\}_{n\geq 0}$ forms a 
prolongation sequence 
and is called the {\it canonical prolongation sequence}  
where $\phi_X: J^{n+1}X \map J^nX$ denote the lift of Frobenius
morphism for each $n$. Analogous to Proposition 1.1 in \cite{Buium 2000}, 
$J^*X$ satisfies the universal property that for any $T^* \in
\mcal{C}_{S^*}$ and $X$ a $\pi$-formal scheme over $S$ we have 
\begin{align}
	\label{univ}
\Hom_S(T^0,X) =\Hom_{\mathcal{C}_{S^*}}(T^*,J^*X).
\end{align}

\subsection{$A$-module schemes}
We will review some of the basic definitions and properties of $A$-module
schemes and their arithmetic jet spaces.

\begin{definition} An \emph{$A$-module scheme} is a pair $(E,\varphi_{E})$ consisting of a smooth $\pi$-formal group scheme $E$ over $S$ and a ring homomorphism $$\varphi_{E}:A\longrightarrow \mathrm{End}_{R}(E).$$ 
\end{definition}

Throughout this article, every $A$-module scheme is assumed to be defined over $S$ unless specified otherwise. Given two $A$-module schemes $G$ and $G^{'}$, we denote the set of all $\mathbb{F}_{q}$-linear morphisms of $S$-group schemes from $G$ to $G^{'}$ by $\mathrm{Hom}_{R,\mathbb{F}_{q}}(G,G^{'})$. \medskip

We now formalize the natural extension of the $A$-module structure from $E$ to $J^nE$. Let $\vp_E:A \map \mathrm{End}_R(E)$ denote the $A$-module structure on $E$. For any $a \in A$ and any $\pi$-adically complete $R$-algebra $C$, the functoriality of $E$ yields a group homomorphism $\vp_E(a): E(C) \map E(C)$.

We define the $A$-module structure on $J^nE$, denoted by $\vp_{J^nE}$, via $\vp_{J^nE}(a)(x):= \vp_E(a)(x)$ for all $x \in E(W_n(B)) = J^nE(B)$. This establishes a natural $A$-module structure on $J^nE$ induced by that of $E$. \cblue 

\cblue
\subsection{Explicit descriptions of $J^nE$}
\label{ExplicitJnE}
We now discuss the specific implications of the above definition of jet 
spaces to our setting
where $E = \hG^d = \Spf R\langle \bx \rangle$ over $\Spf R$ endowed with an 
$A$-module structure, where $\bx = (x_1,\dots , x_d)$ is tuple of 
$d$-indeterminates. This case will be especially pertinent to our cause when
$E$ is an admissible $A$-module (defined below).

Since $E = \hG^d$, by the definition
of the jet space functor in (\ref{jetfuncdef}) we have an isomorphism 
$J^nE \simeq \bb{W}_n^d$ 
where $\bb{W}_n$ is the affine $(n+1)$-space $\hat{\bA}^{n+1} \simeq 
R\langle \bx ,\bx_1, \dots ,\bx_n\rangle$ with the 
group structure of the additive $\pi$-typical Witt vectors of length $n+1$
and $\bx_i = (x_{1i},\dots , x_{di})$ is a tuple of $d$-indeterminates 
which are the {\it Witt coordinates} on $\bb{W}_n^d$ induced from $\bx$.
Also note that there exists another set of coordinates on $\bb{W}^d_n$, 
called the {\it Buium-Joyal } coordinates $(\bx, \bx', \dots 
\bx^{(n)})$ and one has a linear transformation between the above two choices
of coordinate functions for $\bb{W}_n^d$ such that we have
$$
\bb{W}_n^d \simeq \Spf R\langle \bx , \bx_1,\dots ,\bx_n\rangle 
\simeq \Spf \langle \bx ,\bx',\dots ,\bx^{(n)}\rangle.
$$
Hence the canonical prolongation sequence of $\pi$-formal rings (this is 
the natural notion associated to our definitions which were introduced above
for the case of $\pi$-formal schemes) associated to $\Ou(E) = R\langle
\bx \rangle$ is as follows:
$$\xymatrix{
\Ou(E) 
\ar[d]^-{{\rotatebox{90}{$\sim$}}} \ar[r]^-u_-\delta & \Ou(J^1E) 
\ar[d]^-{{\rotatebox{90}{$\sim$}}} \ar[r]^-u_-\delta & \cdots  
\ar[r]^-u_-\delta & \Ou(J^nE) \ar[d]^-{{\rotatebox{90}{$\sim$}}}
 \ar[r]^-u_-\delta & \cdots  \\
R\langle \bx \rangle \ar[r]^-u_-\delta & R\langle \bx, \bx'\rangle 
\ar[r]^-u_-\delta & \cdots  \ar[r]^-u_-\delta & R\langle \bx,\bx',\dots,
\bx^{(n)}\rangle \ar[r]^-u_-\delta & \cdots ,
}$$
where for all $n \geq 0$, $(u,\delta): \Ou(J^nE) \map \Ou(J^{n+1}E)$ is a 
$\pi$-derivation such that $u(\bx^{i}) = \bx^{i}$ and $\delta (\bx^{i})
= \bx^{i+1}$ for all $i = 0 , \dots n$. 

The associated lift of Frobenius map $\phi: \Ou(J^nE) \map \Ou(J^{n+1}E)$ 
is given by $\phi(a) = u(a)^{\hat{q}} + \delta a$ for all $a \in \Ou(J^nE)$
which is a ring homomorphism.

Here we remark that the ring map $u$ induces (at the level of 
$\pi$-formal schemes) the {\it truncation} or
the {\it restriction} map of Witt vectors $T: \bb{W}_n^d \map \bb{W}_{n-1}^d$
given by 
$$
T(\bx,\bx',\dots \bx^{(n)}) = (\bx, \dots, \bx^{(n-1)}).
$$
On the other hand, the ring map $\phi$ induces the {\it Frobenius} map 
of Witt vectors $F: \bb{W}_n^d \map \bb{W}_{n-1}^d$ given by
$$
F(\bx, \bx',\dots , \bx^{(n)}) = (\bx^{\hat{q}} + \pi \bx' ,\dots ,
{(\bx^{(n-1)})}^{\hat{q}} + \pi \bx^{(n)})
$$
and by the above discussion, $T$ and $F$ also are $A$-linear morphisms of 
$\pi$-formal schemes. 
For a detailed discussion on positive characteristic jet spaces, we refer the reader to Section $4$A of \cite{BS b}, and to \cite{bps, bor11b} for the general theory of arithmetic jet spaces. \color{black} 

Note that we have the following short exact sequence of $A$-module schemes:
\begin{align}
\label{fshort1}
0 \map N^n \stk{i}{\map} J^nE \stk{u}{\map} E \map 0 
\end{align}
where $u:J^nE \map E$ is the natural projection map and $N^n$ is the kernel of $u$. 

Given any $A$-module scheme $E$, consider the group $M(E):=\mathrm{Hom}_{R,\mathbb{F}_{q}}(E, \hat{\mathbb{G}}_{a})$ of $\mathbb{F}_{q}$-linear maps from $E$ to $\hat{\mathbb{G}}_{a}$ over $S$. The group $M(E)$ admits a natural left action by $A$ and a right action by $R$ defined by  
$$(a\otimes b)m:=b\circ m\circ \varphi_{E}(a)\quad \mathrm{for \ any \ a\otimes b \in A\otimes_{\mathbb{F}_{q}}R},$$
which corresponds to the composition
$$ E \xrightarrow{\varphi_{E}(a)}E\xrightarrow{m} \hat{\mathbb{G}}_{a}\xrightarrow{b}\hat{\mathbb{G}}_{a}.$$ 
The $A\otimes_{\mathbb{F}_{q}}R$-module $M(E)$ is called the $A$-\emph{motive} associated to $E$.  
\cblue
Note that an $A$-module scheme $E$ naturally endows $\Lie (E)$ with an $A$-module structure. This structure, denoted by $\vp_{\Lie (E)}$, is given by the derivative map $D\vp_E(a)$ for all $a \in A$; that is, 
$$
\vp_{\Lie (E)}(a)(y) := D\vp_E(a)(y)
$$
for all $a \in A$ and $y \in \Lie (E)$.

\color{black}

\begin{definition}  
\label{abAnddef}
An abelian Anderson $A$-module of rank $r$ and dimension $d$ over $S$ is an affine $A$-module scheme $(E,\varphi_{E})$ over $S$ of relative dimension $d$ such that:
\begin{itemize}
\item[(i)] $E$ is isomorphic to $\hat{\mathbb{G}}_{a}^{d}$ as $\mathbb{F}_{q}$-modules.
\item[(ii)] $(\varphi_{\mathrm{Lie}E}(a)-\theta(a))^{d}=0$, for all $a \in A$.
\item[(iii)] The associated $A$-motive $M(E)$ is a locally free $A\otimes_{\mathbb{F}_{q}}R$-module of rank $r$.
\end{itemize}
\end{definition}
\noindent We remark that even if $E$ is an Anderson module, $J^nE$ is not necessarily one.

We say $E$ is an \emph{admissible} $A$-module of dimension $d$ if it satisfies conditions (i) and (ii) above. Note that if $E$ is an admissible $A$-module, then $\Lie(E)$ is also admissible, as the $A$-action is given by $\varphi_{\Lie (E)}(z)=A_{0}{\tau}^{0}.$ \cblue 
Let $E$ be an admissible $A$-module of dimension $d$ over $S$ with the $A$-action given by 
 $$\varphi_{E}(z)=\displaystyle{\sum_{i=0}^{s}}A_{i}{\tau}^{i},$$
where $A_{0}=\pi I+V(z)$. Observe that $\varphi_{E}(ab)=\varphi_{E}(a)\circ \varphi_{E}(b)$. Consequently, we obtain:
\begin{align}\label{V-id}
\nonumber \theta(ab)I+V(ab)&=(\theta(a)I+V_{a})(\theta(b)I+V(b))\\
V(ab)&=\theta(a)V(b)+\theta(b)V(a)+V(a)V(b).
\end{align}
This implies, in particular, that $U \circ V(z)=0$ yields $U \circ V(g)=0$ for all $g\in \FF_{q}[z]$, where $U$ is an $R$-linear map. Define 
\begin{align*}
	V^{\perp}:=\{u\in \mathrm{Mat}_{1\times d}(R)~|~uV(z)=0\}.
\end{align*}
Since $R$ is a discrete valuation ring, $V^{\perp}$ is a free submodule of $R^{d}$ of rank $h:=d-\mathrm{rank}(V(z))$.

\subsection{Delta characters associated to $A$-module schemes}
Let $\hG$ denote the $\pi$-formal additive group scheme over $\Spf R$. 
Note that $\hG$ naturally admits an $R$-module structure via scalar multiplication. Let $(\hG,\vp_{\hG})$ denote the $\pi$-formal additive group scheme $\hG$ equipped with the tautological $A$-action induced by the characteristic map $\theta:A \map R$. 

Let $E$ be an $A$-module scheme. We denote by ${\bX}(E):=\mathrm{Hom}_{A}(E,\hat{\mathbb{G}}_{a})$ the group of all $A$-linear morphisms of $\pi$-formal group schemes from $E$ to $\hat{\mathbb{G}}_{a}$, where $\hat{\mathbb{G}}_{a}$ is equipped with the $A$-module structure induced by the characteristic map. 
 %For convenience, we will denote ${\bX}(J^{n}E)$ by ${\bX}_{n}(E)$ throughout the remainder of this article. 

An $A$-linear morphism of group schemes from $J^{n}E$ to $\hat{\mathbb{G}}_{a}$ is called a \textit{differential} or \textit{delta character of order $\leq n$} of $E$. The group of all such $\d$-characters is denoted by ${\bX}_{n}(E)$.

Since $\hG$ is an $R$-module $\pi$-formal scheme over $S$, $\bX_n(E)$ naturally becomes an $R$-module. Furthermore, the inverse system $J^{n+1}E\xrightarrow{u} J^{n}E$ defines a directed system of $R$-modules via the pullback map:
$$\ldots \xrightarrow{u^{*}} {\bX}_{n}(E)\xrightarrow{u^{*}} {\bX}_{n+1}(E) \xrightarrow{u^{*}}\ldots$$ 
We define 
$${\bX}_{\infty}(E)=\varinjlim {\bX}_{n}(E).$$

Similarly, pulling back by the Frobenius map $\phi:J^{n+1}E\to J^nE$ induces a Frobenius operator
$\bX^n(E)\to \bX^{n+1}(E)$. However since $\phi:J^{n+1}E\to J^nE$ is not a morphism over $\Spf R$ but
instead lies over the Frobenius endomorphism $\phi$ of $\Spf R$, some care is required.
Consider the relative Frobenius morphism $\phi_{E/R}$, defined to be the unique
morphism making the following diagram commute:
$$\xymatrix{
J^{n+1}E \ar@{.>}^{\phi_{E/R}}[rd] \ar@/^/[rrd]^\phi \ar@/_/[ddr]& & \\
& J^nE \times_{(\Spf R),\phi} \Spf R \ar[d] \ar[r] & J^nE \ar[d] \\
& \Spf R \ar[r]_\phi & \Spf R 
}$$
Then $\phi_{E/R}$ is a morphism of $A$-module formal schemes over $\Spf R$.

Then given a $\d$-character $\Theta:J^nE\to \hG$, define $\phi^*\Theta$ to be 
the composition
\begin{equation}
	J^{n+1}E \longlabelmap{\phi_{E/R}} J^nE \times_{(\Spf R),\phi} \Spf R 
	\longlabelmap{\Theta\times\mathbbm{1}} \hG\times_{(\Spf R),\phi} \Spf R\longlabelmap{\iota} \hG,
\end{equation}
where $\iota$ is the isomorphism of $A$-module formal schemes over $R$
coming from the fact that $\hG$ descends to $\hat{A}$ as an $A$-module scheme. For any $R$-algebra $B$,
the induced morphism on $B$-points is
$$
E(W_{n+1}(B)) \longlabelmap{E(F)} E(W_n(B)^\phi) \longlabelmap{\Theta_B^\phi} 
B^\phi \longlabelmap{b\mapsto b} B.
$$
Note that the composition $E(W_{n+1}(B)) \map B$ is indeed a morphism of 
$A$-modules because the identity map $B^\phi \map B$ is $A$-linear since 
$\phi$ restricted to $\hA$ is the identity.

Hence we have an additive map $\phi^*: \bX_n(E) \to \bX_{n+1}(E)$ given by 
$\Theta\mapsto \phi^*\Theta$. Note that this map is not $R$-linear. 
 However, the map
$$\bX_n(E) \longmap \bX_{n+1}(E)^\phi, \quad \Theta\mapsto 
\phi^*\Theta $$ is $R$-linear, where for any $R$-module $M$, $M^\phi$ 
is the group $M$ endowed with the $R$-action obtained by precomposing
with the Frobenius $\phi: R \map R$. 
Taking direct limits in $n$, we obtain an $R$-linear map
$$ \bX_\infty(E) \longmap \bX_\infty(E)^\phi, \quad \Theta\mapsto
\phi^*\Theta.  $$
Hence $\bX_\infty(E)$ is a left module over the twisted polynomial
ring $R\{\phi^*\}$ with commutation law $\phi^* r = \phi(r)\phi^*$.

\subsection{Splitting of $J^{n}E$}\label{splitting_J^n}
The short exact sequence \eqref{fshort1} always splits as $\mathbb{F}_{q}$-module schemes via the \cblue \textup{Teichm\"{u}ller section}$$\teich:E\longrightarrow J^{n}E .$$ Thus, as an $\mathbb{F}_{q}$-module scheme, $$J^{n}E \simeq E\times N^{n}.$$ For any element $\Theta \in {\bX}_{n}(E)$, we have the following commutative diagram: 

$${
\xymatrix{
E \ar[r]^\teich \ar[rd]_{\teich^{*}\Theta} &
J^{n}E \ar[d]^\Theta &
N^{n} \ar[l]_\iota \ar[ld]^{\iota^{*}\Theta}\\
&\hat{\mathbb{G}}_a}
}.$$
This allows us to write $\Theta(\mathbf{x}_{0},\ldots \mathbf{x}_{n})=\teich^{*}\Theta(\mathbf{x}_{0})+\iota^{*}\Theta(\mathbf{x}_{1},\ldots ,\mathbf{x}_{n})$. We emphasize that the splitting of $J^{n}E$ is not, in general, an isomorphism of $A$-module schemes, as the map $\teich^{*}\Theta$ is only $\mathbb{F}_q$-linear. The following theorems provide a straightforward generalization of Section $8$ in \cite{BS b}. This splitting induces a splitting in the Lie algebra of the jet sequence \ref{fshort1}, which would be crucial in Section \ref{ziso} relating delta characters to de Rham cohomology.

\subsection{Character groups of kernels and their explicit bases} In this subsection, we review the construction of the lateral Frobenius morphisms and explicitly describe the characters of the  kernels of jet spaces.\subsubsection{Lateral Frobenius}We begin by recalling the lateral Frobenius morphisms as constructed in \cite{BS b}. Let $\mathbb{W}_{n}$ denote the $\pi$-formal group scheme, which is isomorphic to $\hat{\mathbb{A}}^{n+1}$, equipped with the group structure of the additive Witt vectors of length $n+1$. Consider the following commutative diagram:
\begin{equation}\label{Lat_witt}
{
\xymatrix{
\mathbb{W}_{n}\ar[d]_{F} \ar[r]^{V} &\mathbb{W}_{n+1}\ar[d]^{F}  \\
\mathbb{W}_{n-1} \ar[r]^{V} &\mathbb{W}_{n}\ar[d]^{F}\\
&\mathbb{W}_{n-1} }}
\end{equation}
satisfying the following relation  
\begin{equation}\label{Fr_var}
FFV=FVF.
\end{equation}
which satisfies the fundamental relation:
\begin{equation}\label{Fr_var} FFV=FVF.
\end{equation}
This relation holds because the operator $FV$ acts as multiplication by $\pi$, and the Frobenius $F$  commutes with $\pi$. We can recast this diagram in the setting of jet spaces by  the natural group scheme identifications $J^{n}E\simeq \mathbb{W}_{n}^{d}$ and $N^{n}\simeq \mathbb{W}_{n-1}^{d}$. Within this context, we adopt the notation $i := V$ and $\phi := F$ for the morphisms in the right-hand column. We then define the \emph{lateral Frobenius}
\begin{equation*}\mathfrak{f}:N^{n+1}\rightarrow N^{n}\end{equation*}
to be the morphism $F:\mathbb{W}_{n}\rightarrow \mathbb{W}_{n-1}$ operating under these identifications. 
Consequently, Diagram \eqref{Lat_witt} translates to the following:
\begin{equation}\label{Lat_jet}
{
\xymatrix{
N^{n+1}\ar[d]_{\mathfrak{f}} \ar[r]^{i} &J^{n+1}E\ar[d]^{\phi}  \\
N^{n} \ar[r]^{i} &J^{n}E\ar[d]^{\phi}\\
&J^{n-1}E }}
\end{equation} It is crucial to emphasize that whenever we employ the notation $N^{n}$, the $A$-module structure is intrinsically understood to be the one that renders $i$ an $A$-linear morphism. This must be strictly distinguished from the $A$-module structure induced via transport of structure from the group scheme isomorphisms $N^{n}\simeq \mathbb{W}_{n-1}^{d} \simeq J^{n-1}E$. Furthermore, we remark that while $i$ operates as a morphism of $S$-schemes, the vertical arrows $\phi$ and $\mathfrak{f}$ in Diagram \eqref{Lat_jet} lie over the Frobenius endomorphism of $S$, rather than the identity morphism.

Although the lateral Frobenius $\mathfrak{f}:N^{n}\rightarrow N^{n-1}$ is defined a priori as a morphism of group schemes, an argument entirely analogous to Proposition 7.2 in \cite{BS b} demonstrates that it is, in fact, $A$-linear. The lateral Frobenius $\mathfrak{f}$, paired with the natural projection map $u$, endows the system $N^*E:=\{N^{n}\}_{n\geq 0}$ with the structure of a prolongation sequence. This leads to the following structural result:
\begin{theorem}\label{affjet1}If $E$ is an admissible $A$-module scheme over $S$, then we have$$N^{*}E \simeq J^*(N^1E)$$as a canonical isomorphism of prolongation sequences of $A$-module schemes over $S^*$.In particular, for all $n \geq 1$, the above induces a canonical isomorphism$$N^{n}E \simeq J^{n-1}(N^{1}E)$$of $A$-module schemes over $S$.\end{theorem}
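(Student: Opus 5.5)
The plan is to use the universal property \eqref{univ} of the canonical prolongation sequence. Put $T^n:=N^{n+1}E$ and take as structure maps the projections $u:N^{n+1}\map N^n$ (restrictions of $u:J^{n+1}E\map J^nE$) together with the lateral Frobenius $\fra:N^{n+1}\map N^n$. Since $T^0=N^1E$, the identity morphism of $N^1E$ then corresponds under \eqref{univ} to a unique morphism of prolongation sequences $\Psi^*:N^*E\map J^*(N^1E)$ with $\Psi^0=\mathrm{id}$ (after the index shift $N^{n}\leftrightarrow T^{n-1}$). The rest of the argument shows that each $\Psi^n$ is an isomorphism and is $A$-linear.

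\textbf{Step 1: $N^*E$ is a prolongation sequence.} Each pair $(u,\fra)$ must be a prolongation. That is, $\fra$ has to be a Frobenius lift relative to $u$ and compatible with $\delta$ on $R$: modulo $\pi$ we need $\fra^\sharp(a)\equiv u^\sharp(a)^{\hq}$, and we need $u\circ\fra=\fra\circ u$. Both can be read off from the Witt-vector model. Under $N^{n}\simeq\bb{W}^d_{n-1}$, the map $\fra$ is the Witt Frobenius $F$ and $u$ is the truncation $T$. So $\fra$ is given by $F(\bx,\dots)=(\bx^{\hq}+\pi\bx',\dots)$, which reduces to the $\hq$-power modulo $\pi$, and $FT=TF$. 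This is the same computation that makes $J^*\hG^d$ a prolongation sequence.

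\textbf{Step 2: $\Psi^n$ is an isomorphism.} The underlying formal schemes carry no $A$-structure, so here I reduce to $\mathbb F_q$-module schemes (or just formal schemes). Since $E\simeq\hG^d$ by (i), both $N^nE\simeq\bb{W}^d_{n-1}$ and $J^{n-1}(N^1E)\simeq J^{n-1}(\bb W_0^d)\simeq \bb W^d_{n-1}$ hold. Under these identifications, the prolongation structures on both sides are $(T,F)$. Hence the identity $\bb W^d_{n-1}\map\bb W^d_{n-1}$ is a morphism of prolongation sequences extending the identity in degree $0$. By the uniqueness in \eqref{univ} it equals $\Psi^n$, so $\Psi^n$ is an isomorphism. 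The point needing care is that the identification $N^1E\simeq\hG^d$ (via $V$) must be matched with the Buium--Joyal coordinates of $J^{n-1}$; I would verify this in ghost coordinates, using that $V$ followed by $F$ is multiplication by $\pi$.

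\textbf{Step 3: $A$-linearity.} This is the main obstacle. The $A$-structure on $N^n$ is the one making $i$ $A$-linear, not the transported one, while $J^{n-1}(N^1E)$ carries the structure induced functorially from that of $N^1E$. For $a\in A$, I would show that both $\Psi^n\circ\vp_{N^n}(a)$ and $\vp_{J^{n-1}N^1}(a)\circ\Psi^n$ are morphisms of prolongation sequences $N^*\map J^*(N^1E)$ restricting to $\vp_{N^1}(a)$ in degree $0$; uniqueness in \eqref{univ} then forces them to agree. For this it suffices that $\vp_{N^n}(a)$ commutes with $u$ and with $\fra$. Commutation with $u$ is clear. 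Commutation with $\fra$ is exactly the $A$-linearity of $\fra$, which the text asserts by analogy with Proposition 7.2 of \cite{BS b}. I would prove it by using that $\phi\circ i$ is $A$-linear, together with \eqref{comp}: $\phi\circ i\circ\fra=\phi\circ\phi\circ i$. Since $i$ is injective and $\phi$ is injective on points of flat algebras (on the ghost side), one can cancel $\phi$ to deduce $A$-linearity of $\fra$. The admissibility hypotheses enter precisely in making this cancellation legitimate, via $\pi$-torsion-freeness of the relevant coordinate rings.
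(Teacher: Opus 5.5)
Your proof is correct, but it does not follow the paper, which gives no argument of its own and only invokes Theorem 1.2 of \cite{PS-2}. Your proof is self-contained and runs as follows:
\begin{enumerate}
\item You build $N^*E\to J^*(N^1E)$ from $\mathrm{id}_{N^1E}$ using the universal property \eqref{univ}.
\item Hypothesis (i) of admissibility ($E\simeq\hG^d$) identifies both sides with $\bb{W}^d_{*-1}$ carrying $(T,F)$. Under this identification the induced map is the identity, hence an isomorphism. It is also additive, since both group laws are Witt addition; you should say this explicitly.
\item $A$-linearity follows from uniqueness in \eqref{univ}, once $\vp_{N^*}(a)$ is known to be an endomorphism of the prolongation sequence, i.e.\ once $\fra$ is $A$-linear.
\end{enumerate}
The cited result presumably treats general smooth commutative group schemes without coordinates. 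Your route is more elementary and explicit, and it shows that only condition (i) of admissibility is needed; condition (ii) plays no role.

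Two points need correcting.

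\textbf{Step 2.} There is nothing to check against Buium--Joyal coordinates, and $FV=\pi$ is not needed here. $V$, $T$ and $F$ are morphisms of group schemes, $TV=VT$, and $\fra$ is by definition $F$ transported along $V$. So if you work in Witt coordinates throughout, the identity is tautologically a morphism of prolongation sequences.

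\textbf{Step 3.} The claim ``$\phi$ is injective on points of flat algebras'' is false as stated. On the ghost side $F_w(w_0,\dots,w_n)=(w_1,\dots,w_n)$ forgets $w_0$, so $\phi:J^nE\to J^{n-1}E$ is not injective on $B$-points.

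What is true, and what you need, is that $\phi\circ i=FV$ is multiplication by $\pi$ on $\bb{W}^d_{n-1}$. It is therefore injective on $B$-points for every $\pi$-torsion-free $B$. You compare $\fra\circ\vp_{N^{n+1}}(a)$ and $\vp_{N^n}(a)\circ\fra$ only after composing with $\phi\circ i$. So you may cancel $\phi\circ i$ by evaluating at the universal point $B=\Ou(N^{n+1})$, which is $\pi$-torsion-free.

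Keep track of the Frobenius twist: the resulting identity is best stated on coordinate rings as $\fra^\sharp\circ\vp_{N^n}(a)^\sharp=\vp_{N^{n+1}}(a)^\sharp\circ\fra^\sharp$. By $\pi$-torsion-freeness, this is exactly the statement that $\vp_{N^*}(a)$ commutes with the $\pi$-derivations. With that fix, your uniqueness argument for $A$-linearity goes through as written.
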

\begin{proof} The proof follows directly from Theorem 1.2 in \cite{PS-2}.\end{proof} 
Recall the following fundamental result regarding the character group of $N^1$:
\begin{lemma}\label{rank-N^1} Let $E$ be an abelian Anderson $A$-module of dimension $d$ and rank $r$ over $R$. Then the rank of the $R$-module $\bX(N^{1})$ is $h$.\end{lemma}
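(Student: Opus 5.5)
We aim to describe $N^1$ explicitly and compute $\bX(N^1)=\Hom_A(N^1,\hG)$ directly. Under the identification $J^1E\simeq \mathbb{W}_1^d$ with Buium--Joyal coordinates $(\bx,\bx')$, the kernel $N^1$ is the locus $\bx=0$, so $N^1\simeq \hG^d$ with coordinate $\bx'$. The first step is to compute the $A$-action $\vp_{N^1}(z)$ that makes $i:N^1\to J^1E$ $A$-linear. Since $\vp_E(z)=\sum_{j=0}^s A_j\tau^j$ and the Witt vector structure satisfies $V(x)^{\hq}$-type identities, we expect that for $j\ge1$ the terms $A_j\tau^j$ applied to $(0,\bx')$ contribute only terms divisible by $\pi$ or of higher Frobenius twist in $\bx'$. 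Concretely, we would compute $\vp_{J^1E}(z)(0,\bx')$ in Witt coordinates: the ghost components are $(0,\pi\bx')$, and applying $\vp_E(z)$ ghost-componentwise (with $\phi$-twisted coefficients on the second component), then dividing by $\pi$, we expect
\[
\vp_{N^1}(z)(\bx')=\phi(A_0)\bx'+\sum_{j\ge1}\pi^{\hq^{j}\cdot(\text{something})-1}\phi(A_j)(\bx')^{\hq^{j}\cdots},
\]
and more precisely $\vp_{N^1}(z)=\pi I+\phi(V(z))+\pi(\text{higher }\tau\text{-terms})$. We do not need full details: the essential point is that the linear term is $\pi I+V'$ with $V'=\phi(V(z))$ (same rank as $V(z)$, since $\phi$ is injective on $R$ and preserves the DVR structure) and all higher $\tau$-terms carry positive powers of $\pi$.

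Second, we determine $A$-linear characters. Any $\FF_q$-linear map $\Theta:N^1\to\hG$ is an additive polynomial $\Theta(\bx')=\sum_k u_k\tau^k$ with $u_k\in\Mat_{1\times d}(R)$ (and $\pi$-adic convergence). It suffices to impose $z$-linearity, since $A$ is separable over $\FF_q[z]$ and linearity for $\FF_q[z]$ extends to $A$ (by the standard argument in \cite{PS-1}, or by uniqueness of $A$-structures extending the $\FF_q[z]$-structure on torsion-free objects). The condition $\Theta\circ\vp_{N^1}(z)=\pi\Theta$ becomes a system of equations in the $u_k$. Comparing the $\tau^0$ coefficients gives $u_0(\pi I+V')=\pi u_0$, i.e.\ $u_0\in (V')^{\perp}$, a free module of rank $h$. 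For higher coefficients, the equation for $\tau^k$ reads $u_k(\pi^{\hq^k}-\pi)=(\text{terms involving }u_{k'},\ k'<k)$ plus the nilpotent correction; since $\pi^{\hq^k}-\pi=\pi(\pi^{\hq^k-1}-1)$ with $\pi^{\hq^k-1}-1$ a unit, and the right side is divisible by $\pi$ because the higher $\tau$-terms of $\vp_{N^1}(z)$ carry a factor $\pi$, we can solve recursively and uniquely for $u_k$ given $u_0$. Nilpotency of $V'$ is handled by iterating the recursion $d$ times, or by first solving modulo the filtration $\ker (V')^j$.

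Thus the map $\Theta\mapsto u_0$ is an isomorphism $\bX(N^1)\simeq (V')^\perp$, giving rank $h$. The main obstacle is the recursive step: verifying that the $\tau^k$-equation always has its right-hand side divisible by $\pi$, so the solution lies in $R$, and that the resulting series converges $\pi$-adically. This is where we must use the precise form of $\vp_{N^1}(z)$, namely that the non-linear part is $\pi$ times something. If this divisibility fails, we would instead argue after tensoring with $K$ to obtain rank exactly $h$, and then use the fact that $\bX(N^1)$ is a saturated submodule of a free module to conclude that it is free of rank $h$ over the DVR $R$.
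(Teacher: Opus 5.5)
The first paragraph below identifies where your main argument fails; the second says what your fallback still needs.

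\textbf{The recursive step is not integral, and $\bX(N^1)\simeq(V')^\perp$ is false when $V(z)\neq0$.} Your formula $\vp_{N^1}(z)=\phi(A_0)+\sum_{j\ge1}\pi^{q^j-1}\phi(A_j)\tau^j$ is right. (The exponents are powers of $q$, not $\hq$, since $\tau$ is the $q$-Frobenius.) So is the $\tau^0$-condition $u_0\in(V')^\perp$. But the $\tau^n$-equation is
\[
u_n\bigl((\pi-\pi^{q^n})I-(V')^{(q^n)}\bigr)=\sum_{j\ge1}u_{n-j}\,\pi^{q^{n-j}(q^j-1)}\,\phi(A_j)^{(q^{n-j})} .
\]
The nilpotent correction involves $u_n$ itself and carries no factor of $\pi$. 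So you must invert $\epsilon_nI-N_n$, where $\epsilon_n=\pi-\pi^{q^n}$ and $N_n^d=0$. Its inverse $\sum_{i<d}\epsilon_n^{-1-i}N_n^i$ has denominators up to $\pi^{d}$, while the right side is only divisible by $\pi^{q^{n-1}(q-1)}$. Since $u_0$ determines the $K$-solution uniquely, no reorganization (iterating $d$ times, filtering by $\ker(V')^j$) can help.

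Here is a concrete counterexample. Take $q=2$, $A=\FF_2[z]$ and $E=C^{\otimes 2}$, with $\vp_E(z)=\begin{pmatrix}\pi&1\\0&\pi\end{pmatrix}+\begin{pmatrix}0&0\\1&0\end{pmatrix}\tau$. Then $h=1$ and $(V')^\perp=R\,(0,1)$. Writing $u_k=(\alpha_k,\beta_k)$, the $\tau^1$-equations are $\alpha_1(\pi-\pi^2)=\pi\beta_0$ and $\beta_1(\pi-\pi^2)=\alpha_1$. So $u_0=(0,1)$ forces $\beta_1=\pi^{-1}(1-\pi)^{-2}\notin R$. One checks that $u_0=(0,\pi)$ does give an integral character, so here the image of $\Theta\mapsto u_0$ is exactly $\pi(V')^\perp$. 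In general this map is injective with image a possibly proper finite-index submodule. That is why the paper, quoting Theorem 3.9 of the earlier paper, normalizes its basis by $D\Psi_{1j}=\pi^{\nu}B_{0j}$. Your integral argument does go through when $V(z)=0$, e.g.\ for Drinfeld modules.

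\textbf{Your fallback is the right route, but the substantive steps are missing.} It is essentially what lies behind Corollary 3.11 of the earlier paper, which is all the paper cites for this lemma. You need two things.

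First, an estimate showing the $K$-solution has bounded denominators and converges. From the display and $v(M_n^{-1})\ge -d$ you get $v(u_n)\ge\min_{k<n}v(u_k)+q^{n-1}(q-1)-d$. Hence for every $u_0\in(V')^\perp$ the unique $K$-solution has valuations bounded below and tending to $\infty$. So some $\pi^c\Theta$ lies in $R\langle\bx'\rangle$.

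Second, a correct passage from $z$-linearity to $A$-linearity; ``uniqueness of $A$-structures'' does not give it. The maps $\Theta\circ\vp_{N^1}(a)$ and $\theta(a)\Theta$ are both $z$-linear. Their linear terms are $u_0(\theta(a)I+V(a)^\phi)$ and $\theta(a)u_0$. These agree because the left kernel of $V(z)^\phi$ is killed by $V(a)^\phi$: separability makes $z-\pi$ a local generator of $\ker(A\otimes K\to K)$. Injectivity of $\Theta\mapsto u_0$ over $K$ then gives equality.

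With both points, $\Theta\mapsto u_0$ embeds $\bX(N^1)$ into $(V')^\perp\cong R^h$ with $K$-spanning image, so the rank is $h$. Freeness is automatic for submodules of $R^h$ over a DVR. Appealing to saturation is unnecessary, and the example shows the image is not saturated in $(V')^\perp$.
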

\begin{proof} See Corollary 3.11 in \cite{PS-1}.\end{proof}

Let us fix an $R$-linear basis $\{B_{01}, B_{02}, \ldots, B_{0h}\}$ for $V^{\perp} \simeq \Lie (E)^*$ (we refer the reader to Proposition 3.10 in \cite{PS-1} for details on this $R$-module isomorphism). In the degenerate case where $V$ vanishes, $V^{\perp}=R^d$ identically, and we may naturally select the standard basis $\{e_1,\ldots, e_{d}\}$. Let $\{\Psi_{11},\Psi_{12},\ldots, \Psi_{1h}\}$ denote an $R$-linear basis of the character group $\bX(N^{1}):=\Hom_A(N^1,\hG)$ such that $D\Psi_{1j}=\pi^{\nu}B_{0j}$, where $\nu$ is the  least positive integer for which $\Psi_{1j}$ is defined over $R$ (see Theorem 3.9 in \cite{PS-1}) for all $1\leq j\leq h$. We encapsulate this basis into the tuple $\Psi_1:=(\Psi_{11},\Psi_{12},\ldots, \Psi_{1h}) \in \Hom_A(N^1,\hG^h)$. For every pair of positive  integers $n\geq i\geq 1$ and $1\leq j\leq h,$ we define the composition map:$$\Psi_{ij}:=\Psi_{1j}\circ\mff^{\circ i}: N^{n}\to \hG,$$where $\mff^{\circ i}$ denotes the sequential composition $\mff^{\circ i}: N^n\xrightarrow{\mff\circ \ldots \circ\mff} N^{n-i}\xrightarrow{u} N^1$. We gather these characters into the tuple $\Psi_i:=(\Psi_{i1},\Psi_{i2},\ldots, \Psi_{ih}) \in \Hom_A(N^n,\hG^h)$.

Following a similar argument to Proposition 7.4 in \cite{BS b}, and by Theorem $4.2$ in \cite{PS-1} we derive the following lemma.
\begin{lemma}\label{ind-psi} Let $E$ be an abelian Anderson $A$-module. Then the elements $\{\Psi_{11}, \ldots,\Psi_{1h},\ldots,  \Psi_{n1},\ldots \Psi_{nh}\}$ are $R$-linearly independent in $\Hom_A(N^n, \hG)$. Hence $\{\Psi_1, \ldots, \Psi_n\}$ forms a $K$-basis of  $\Hom_A(N^n, \hG^h)_K.$\end{lemma}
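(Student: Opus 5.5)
The proof will go through derivatives at the origin: linear independence of the $\Psi_{ij}$ is reduced to linear independence of their linear parts, computed in Witt coordinates. Throughout I use the identifications of Theorem \ref{affjet1}, $N^{n}\simeq J^{n-1}(N^{1})$ as prolongation sequences, under which $\mff$ is the Frobenius $\phi$ of the jet space of $N^1$. Then $\Psi_{ij}=\Psi_{1j}\circ\mff^{\circ i}$ is, up to the truncation $u$, the character $(\phi^{*})^{i-1}\Psi_{1j}$ of order $\le i-1$ on $J^{n-1}(N^1)$. (I read the indexing so that $\Psi_{1j}$ involves no $\mff$; if the indices are shifted by one, the argument is the same.) The first step is therefore to prove that the characters $(\phi^*)^{k}\Psi_{1j}$, for $0\le k\le n-1$ and $1\le j\le h$, are $R$-linearly independent in $\bX(J^{n-1}N^1)$.

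I will argue by induction on $n$, using an ``order'' filtration. Write $N^1\simeq \hG^d$ with coordinates $\by$ and $J^{n-1}N^1$ with Buium--Joyal coordinates $(\by,\by',\dots,\by^{(n-1)})$. Any character $\Theta$ of order $\le k$ is additive and $\FF_q$-linear, so it is a twisted polynomial in $\tau$ in each block of coordinates. The key computation is the coefficient of the top variable. For $\phi^*\Theta$, the top variables $\by^{(k+1)}$ enter only through $F(\dots,\by^{(k)},\by^{(k+1)})=(\dots,(\by^{(k)})^{\hq}+\pi\by^{(k+1)})$. Hence, on the top block, $\phi^*\Theta$ has the Frobenius twist of $\Theta$'s coefficient on $\by^{(k)}$, multiplied by $\pi$ and by the linear derivative term. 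Iterating, the linear part of $(\phi^*)^{k}\Psi_{1j}$ in $\by^{(k)}$ is
$$\pi^{k}\,\phi^{k}(D\Psi_{1j})=\pi^{k}\,\phi^k(\pi^{\nu}B_{0j}).$$

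The independence then follows. Suppose $\sum_{k,j} c_{kj}(\phi^*)^k\Psi_{1j}=0$ with $c_{kj}\in R$. Restricting to the linear terms in the top block $\by^{(n-1)}$ gives
$$\sum_j c_{n-1,j}\,\pi^{n-1+\nu}\,\phi^{n-1}(B_{0j})=0.$$
The $B_{0j}$ form a basis of the saturated submodule $V^\perp\subset R^d$. Applying $\phi^{n-1}$ to the coordinates keeps them independent: $\phi$ is injective and preserves the nonvanishing of an $h\times h$ minor, since $\phi(x)\equiv x^{\hq}$ mod $\pi$ and $R$ is a DVR. Hence $c_{n-1,j}=0$ for all $j$, and induction on $n$ kills the remaining coefficients.

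The $K$-basis claim then requires spanning as well. By Theorem \ref{affjet1} and the rank computation of Theorem 4.2 in \cite{PS-1}, $\Hom_A(N^n,\hG)$ has rank $nh$: inductively, $\bX_{k}(N^1)/\bX_{k-1}(N^1)$ injects via the top linear coefficient into a module of rank $h$, using Lemma \ref{rank-N^1} for $k=0$. The $nh$ independent elements found above therefore span after tensoring with $K$.

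The main obstacle is making the top-coefficient computation rigorous. Two points need checking. First, $\Psi_{1j}$ may contain nonlinear $\tau$-terms, so I must verify that the higher $\tau$-powers cannot produce a linear term in the top variables; this uses $\hq$-th powers of lower variables together with additivity. Second, I must justify the rank bound of the graded pieces of $\bX_k(N^1)$. For the latter I will rely on the cited rank statement of Theorem 4.2 in \cite{PS-1} rather than reprove it.
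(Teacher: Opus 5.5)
Your argument is correct, but its key step takes a different route from the paper's. The paper restricts a relation $\sum c_{ij}\Psi_{ij}=0$ to the bottom block $\bx'$ of coordinates on $N^n$. There $\Psi_i$ has leading term $\pi^{\nu}\phi^{i-1}(B_0)(\bx')^{\hq^{i-1}}$, and the paper separates the $\Psi_i$ by $\tau$-degree. You instead read off the coefficient of the linear monomial in the top block $\by^{(n-1)}$, which only the order-$(n-1)$ characters involve, and then descend. Both arguments finish with the independence of the $\phi$-twisted $B_{0j}$ and the rank count of Theorem 4.2 of \cite{PS-1}.

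Your leading term cannot be disturbed by other terms. By additivity, the nonlinear $\tau$-terms contribute only $q^m$-th powers of $\pi^k\by^{(k)}$, which settles the first point you flagged. In the paper's comparison, by contrast, the term $B_f(\bx')^{\hq}$ of $c_1\Psi_1$ lands in the same degree as the leading term of $\Psi_2$. So ``distinct degrees cannot cancel'' really has to be run as an ascending induction: $c_1=0$ from degree $1$, then $c_2=0$ from degree $\hq$, and so on. In exchange, the paper's route needs only the first component of $\mff$.

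You also use only the injectivity of $\phi$, which holds since $v(\phi(x))=v(x)$. The paper invokes $\phi$ being an automorphism, which need not hold when $l$ is imperfect.

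Finally, your parenthetical rank sketch can be completed without the citation. The kernel of $J^kN^1\to J^{k-1}N^1$ is $\hG^d$ with linear $z$-action $\pi I+\phi^{k+1}(V)$. So the top linear coefficient of a character is annihilated by $\phi^{k+1}(V)$, a condition cutting out a rank-$h$ module. Nilpotency of $V$ then forces an $A$-linear character of this kernel with zero linear part to vanish, which gives the upper bound $nh$ directly.
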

\begin{proof}We first show that any $R$-linear relation among the elements $\{\Psi_1, \ldots, \Psi_n\}$ is trivial. We establish linear independency by examining the lowest-degree terms of the coordinate expansions.

In Buium--Joyal coordinates as in section \ref{ExplicitJnE}, we may express $\Psi_{1}$ as$$\Psi_{1}(\bx') = \pi^{\nu}B_{0}\bx' + B_1(\bx')^q + \cdots,$$where $B_{0} = [B_{01}, \dots, B_{0h}]$ is the coefficient matrix corresponding to the basis of $\Lie (E)^*$. Recall that the lateral Frobenius $\mff$ acts as the $\hq=q^f$-th power Frobenius modulo $\pi$. Indeed, we have
\begin{align*}\mff: N^2&\to N^1\\
\mff(\bx',\bx'')&=\bx'^{(\hq)}+\pi \bx''
\end{align*}
where $M^{(q)} = \left(M^{q}_{ij}\right)$ denotes the matrix obtained by raising each entry of a matrix $M \in \mathrm{Mat}_{m\times n}(R)$ to the $q$-th power. 
Consequently, the composition $\Psi_{2} = \Psi_1 \circ \mff$ has the coordinate expansion
\begin{align*}\Psi_{2}(\bx',\bx'') &=( \mff^*\Psi_1)(\bx',\bx'')\\
 &=(\pi^\nu B_0)^{\phi}(\bx'^{(\hq)})+\pi \bx'')+\ldots\\
 &=\pi^{\nu} B_{0}^{(\phi)}(\bx')^{(\hq)} + \text{higher order terms},
\end{align*}
where $M^\phi:=\phi(m_{ij})$ for any matrix $M=(m_{ij}) \in \mathrm{Mat}_{m\times n}(R).$ 
By inductive argument we get,
$$\Psi_{i}(\bx',\bx'',\ldots) = =\pi^{\nu} B_{0}^{\phi^{(i-1)}}(\bx')^{(\hq^{i-1})} + \text{higher order terms}$$

Suppose there is an $R$-linear dependence relation $\sum_{i=1}^n c_i \Psi_i = 0$. Because the lowest-degree non-zero terms of each $\Psi_i$ appear at distinct degrees $\hq^{i-1}$, they cannot cancel one another. Thus, the relation implies that the coefficient of $(\bx')^{(\hq^{i-1})}$ must vanish identically for each $i$. To ensure $c_i = 0$, it suffices to show that the matrix $B_0^{\phi^{(i-1)}}$ is non-zero, which is true as $\phi$ is an automorphism of $R$ and $B_0$ is a non-zero matrix. 

To prove $\{\Psi_{11}, \ldots,\Psi_{1h},\ldots,  \Psi_{n1},\ldots \Psi_{nh}\}$ are $R$-linearly independent in $\Hom_A(N^n, \hG)$, it is enough to to prove that $\sum_{j=1}^h c_{ij} \Psi_{ij} = 0$ implies $c_{ij}=0,$ for $c_{ij}\in R.$
Moreover, considering the coefficients of $\bx'^{(\hq^{i-1})}$, it is enough to prove that the column vectors $\{B_{01}^{\phi^{(i-1)}}, \dots, B_{0h}^{\phi^{(i-1)}}\}$ are $R$-linearly independent.

Given that the elements $\{B_{01}, B_{02}, \ldots, B_{0h}\}$ are $R$-linearly independent, meaning there exists an $h \times h$ minor of $B_0$ with a non-zero determinant. Since $\phi$ is a ring homomorphism on $R$, for any square matrix $B$ with entries in $R$, the identity $\det(B^{\phi^{(i-1)}}) = \phi^{(i-1)}(\det B)$ naturally holds. Moreover, as $\phi$ is an automorphism, applying this to a non-singular $h \times h$ minor of $B_0$, we deduce that the corresponding $h \times h$ minor of $B_0^{\phi^{(i-1)}}$ is also non-singular. This implies the column vectors $\{B_{01}^{\phi^{(i-1)}}, \dots, B_{0h}^{\phi^{(i-1)}}\}$ remain $R$-linearly independent, completing the first assertion of the proof. 

By Theorem $4.2$ in \cite{PS-1} we can derive that $\dim_K \Hom(N^n,\hG^h)_K$ is $n.$ Thus the $R$-linearly independent set $\{\Psi_{1}, \dots, \Psi_{n}\}$ forms a $K$-basis of $\Hom_A(N^n, \hG^h)_K$, completing the proof.\end{proof}

\noindent \textbf{Characters of the Carlitz Module.} Let $A=\FF_q[z]$. The $A$-module structure defining the Carlitz module $E$ is given by:$$\varphi_{E}(z)=\pi z+z^q.$$Following the construction detailed in \cite{BS b} (p. 821), we obtain a canonical character, expressed in local coordinates as:$$\Psi_1(x')=1/\pi \log_C(\pi(x-x')),$$where $\log_C$ denotes the formal Carlitz logarithm.

\subsection{The $R$-module $\bXp(E)$ and ${\bH}(E)$}\label{ziso}
In this subsection, we recall specific constructions from \cite{BS b, PS-1}. 
The aim is to explain the map between short exact sequences as in
 (\ref{diag-crys-limit-intro}).
These will be utilized later in Section \ref{LocalShtuka} to define a $z$-isocrystal equipped with a Hodge-Pink structure canonically associated to an Anderson module $E$.

A $\d$-character $\Theta \in {\bX}_{n}(E)_{K} $ is called \emph{primitive} if $$\Theta \notin \bX(E)_{K}+ u^{*}{\bX}_{n-1}(E)_{K}+\phi^{*}({\bX}_{n-1}(E)_{K})_{\phi}.$$
For $i\geq 1$, let $\mathbb{B}_{i}\subset {\bX}_{i}(E)_{K}$ be a finite subset whose elements project to distinct, linearly independent vectors that form a $K$-basis for $$\dfrac{{\bX}_{i}(E)_{K}}{u^{*}{\bX}_{i-1}(E)_{K}+\phi^{*}({\bX}_{i-1}(E)_{K})_{\phi}}.$$ Such a set $\mathbb{B}_{i}$ is called a \emph{primitive basis} of ${\bX}_{i}(E)_{K}$. We define the module of primitive characters as $${\bX}_{\mathrm{prim}}(E):=\underrightarrow{\lim} \dfrac{{\bX}_{n}(E)}{\bX(E)+\phi^{*}({\bX}_{n-1}(E))_{\phi}}.$$ 

Furthermore, we define the $R$-module $${\bH}_{n}(E):=\dfrac{{\bX}(N^{n})}{i^{*}\phi^{*}({\bX}_{n-1}(E)_{\phi})}.$$
The morphism $u:N^{n+1}\longrightarrow N^{n}$ induces a map $u^{*}:{\bX}(N^{n})\longrightarrow {\bX}(N^{n+1})$, which commutes with both $i^{*}$ and $\phi^{*}$. Consequently, we obtain a map $$u^{*}:{\bH}_{n}(E)\longrightarrow {\bH}_{n+1}(E).$$
We then define ${\bH}(E):=\underrightarrow{\lim} \, {\bH}_{n}(E)$ and $\mathbf{I}(E):=\underrightarrow{\lim} \, \mathbf{I}_{n}(E)$.
Similarly, $\mathfrak{f}:N^{n+1}\longrightarrow N^{n}$ induces a linear map $\mathfrak{f}^{*}:{\bX}(N^{n})\longrightarrow {\bX}(N^{n+1})$, which descends to a semilinear map of $R$-modules $$\mathfrak{f}^{*}:{\bH}_{n}(E)\longrightarrow {\bH}_{n+1}(E)$$ by Proposition \ref{diff} (ii). This induces a semilinear endomorphism $\mathfrak{f}^{*}:{\bH}(E)\longrightarrow {\bH}(E)$. 

Recall from Section $5$ of \cite{Gekeler_a} that $\mathrm{Ext}_{A}(E,\hG)$ denotes the group of extension classes of $E$ by $\hG$ as $A$-module schemes over $R$. Moreover, $\mathrm{Ext}^{\sharp}_{A}(E,\hG)$ denotes the group of extension classes of $E$ by $\hG$ over $R$, equipped with a splitting of the induced extension of Lie algebras. By definition, for an abelian Anderson module $E$, we have $\Hdr(E)=\mathrm{Ext}^{\sharp}_{A}(E,\hG)$.

Consider the exact sequence $$0\longrightarrow N^{n}\longrightarrow J^{n}E\longrightarrow E\longrightarrow 0.$$ 
Applying $\HomA(-,\hG)$ to the above short exact sequence of $A$-module 
$\pi$-formal schemes we obtain the following long exact sequence of 
$R$-modules
\begin{align}
\label{connecting}
0\map \bX(E) \map \bX_n(E) \map \bX(N^n) \stk{\partial}{\longrightarrow} 
\Ext(E,\hG)
\end{align}
where $\partial$ is the connecting morphism of $R$-modules.

The Teichmüller section $v:E\longrightarrow J^{n}E$ induces a splitting of the corresponding Lie algebras:
\begin{align*}
 \xymatrix{
0  \ar[r] &\mathrm{Lie} (N^{n})\ar[r] &\mathrm{Lie} (J^{n}E) \ar[r] \ar@/^1pc/[l]^{s_{\mathrm{Witt}}} &\mathrm{Lie} (E)  \ar[r]&0}
\end{align*}
where $s_{\mathrm{Witt}}(\mathbf{x}_{0},\ldots,\mathbf{x}_{n})=(\mathbf{x}_{1},\ldots \mathbf{x}_{n})$ is the associated retraction. For any $\Psi \in {\bX}(N^{n})$, we obtain the following pushout diagram: 
\begin{align*}
\xymatrix{
0  \ar[r] &N^{n} \ar[d]^{\Psi} \ar[r] &J^{n}E \ar[d]^{e_{\Psi}} \ar[r] &E \ar@{=}[d] \ar[r]&0 \\
0 \ar[r] &\hat{\mathbb{G}}_{a} \ar[r] &E_{\Psi}^{*} \ar[r] & E \ar[r] &0}
\end{align*}
where $E_{\Psi}^{*}=(J^{n}E\times \hat{\mathbb{G}}_{a})/\Gamma(N^{n})$ and $\Gamma(N^{n})=\{i(\mathbf{z}),-\Psi(\mathbf{z})~|~\mathbf{z}\in N^{n}\}\subset J^{n}E\times \hat{\mathbb{G}}_{a}.$
Let $s_{\Psi}$ denote the induced splitting of Lie algebras for the pushout extension:
\begin{align*}
 \xymatrix{
0  \ar[r] &\mathrm{Lie} (\hat{\mathbb{G}}_{a})\ar[r] &\mathrm{Lie} (E_{\Psi}^{*}) \ar[r]\ar@/^1pc/[l]^{s_{\Psi}} &\mathrm{Lie} (E)  \ar[r]&0}.
\end{align*}
We define $\tilde{s}_{\Psi}:\mathrm{Lie}(J^{n}E)\times \mathrm{Lie}(\hat{\mathbb{G}}_{a})\longrightarrow \mathrm{Lie}(\hat{\mathbb{G}}_{a})$ as
\begin{align*}
\tilde{s}_{\Psi}(\mathbf{x},y)=D\Psi(s_{\mathrm{Witt}}(\mathbf{x}))+y, 
\end{align*}
from which it follows that $\tilde{s}_{\Psi}|_{\mathrm{Lie}(\Gamma(N^{n}))}=0$. Therefore, 
\begin{align}\label{s_Witt_def}
s_{\Psi}[\mathbf{x},y]=D\Psi(s_{\mathrm{Witt}}(\mathbf{x}))+y 
\end{align}
is a well-defined retraction.

Hence the above splitting and (\ref{connecting}) we obtain the following 
commutative diagram of $R$-modules:
\begin{equation}
\xymatrix{
		0 \ar[r] & {\bX}_{n}(E)/\bX(E) \ar[r] \ar[d]^{\tilde{\Upsilon}} & 
		{\bX}(N^{n}) \ar[r] \ar[d]^{\tilde{\Phi}}& 
		\mathrm{Ext}_{A}(E,\hat{\mathbb{G}}_{a})  \ar@{=}[d]& \\
		0 \ar[r] &\mathrm{Lie} (E)^{*} \ar[r] & \mathrm{Ext}^{\sharp}_{A}(E,\hat{\mathbb{G}}_{a}) \ar[r] & 
\mathrm{Ext}_{A}(E,\hat{\mathbb{G}}_{a}) \ar[r] & 0,
	}
\end{equation}\label{Char_to_Hdg_fil_1}
where $\tilde{\Phi}(\Psi) = (E^*_\Psi,s_\Psi) \subset \bX(N^n)$ 
for all $\Psi \in \bX(N^n)$
and $\tilde{\Upsilon}$ is the naturally induced morphism.

It follows that the submodule $\iota^* \phi^{*}({\bX}_{n-1}(E))_{\phi} \subset
\bX(N^n)$ under $\tilde{\Phi}$ maps to $\{0\}$ in $\mathrm{Ext}^{\sharp}_{A}(E,\hat{\mathbb{G}}_{a})$ (cf. Proposition $8.1$ in \cite{PS-1}). 
Hence this induces the map of short exact sequences of $R$-modules:
$${
\xymatrix{
0  \ar[r] &\dfrac{{\bX}_{n}(E)}{\bX(E)+\phi^{*}({\bX}_{n-1}(E))_{\phi}}\ar[d]^\Upsilon \ar[r] &{\bH}_{n}(E)\ar[d]^\Phi \ar[r] &\mathbf{I}_{n}(E)\ar@{^{(}->}[d] \ar[r]&0 \\
0 \ar[r] &\mathrm{Lie} (E)^{*} \ar[r] &\mathrm{Ext}^{\sharp}_{A}(E,\hat{\mathbb{G}}_{a}) \ar[r] &\mathrm{Ext}_{A}(E,\hat{\mathbb{G}}_{a}) \ar[r] &0.}
}$$
Finally, taking the direct limit with respect to $n$ gives us:
\label{map to_deRham}
\begin{equation} 
{\xymatrix{
0  \ar[r] &{\bX}_{\mathrm{prim}}(E)\ar[d]^\Upsilon \ar[r] &{\bH}(E)\ar[d]^\Phi \ar[r] &\mathbf{I}(E)\ar@{^{(}->}[d] \ar[r]&0 \\
0 \ar[r] &\mathrm{Lie} (E)^{*} \ar[r] &\bH^{*}_{\mathrm{dR}}({E}) \ar[r] &\mathrm{Ext}_{A}(E,\hat{\mathbb{G}}_{a}) \ar[r] &0.}
}.
\end{equation}

\section{The map $\Upsilon$}
In this section, we establish that the map $\Upsilon$ induces a $K$-linear isomorphism between $\bXp(E)_K$ and $\mathrm{Lie} (E)^{*}_K$. We need some preparation. We now recall the following results that plays a crucial role in proving Theorem \ref{thm-1-intro}.
\begin{theorem}[Proposition 8.4 in \cite{BS b}]\label{comfact} There exists a unique $A$-linear map $\Delta$ that makes the following diagram commute:
$$\xymatrix{
N^{n+1} \ar[rr]^-{\phi \circ i-i \circ \mathfrak{f}}  \ar[d]_u & & J^{n}E \\
N^1 \ar@{..>}[rru]_\Delta & &
}$$
\end{theorem}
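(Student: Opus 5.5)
The statement to prove is that the map $\phi\circ i - i\circ\mathfrak{f}:N^{n+1}\to J^nE$ factors uniquely through $u:N^{n+1}\to N^1$ via an $A$-linear $\Delta$. The plan is to reduce everything to Witt vector identities, using the identifications $J^nE\simeq \mathbb{W}_n^d$ and $N^{n+1}\simeq\mathbb{W}_n^d$ as group schemes, with $i=V$, $\phi=F$ and $\mathfrak{f}=F$ as in \eqref{Lat_jet}.

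First, I will show the composite $\Gamma:=\phi\circ i-i\circ\mathfrak{f}$ kills $\ker(u:N^{n+1}\to N^1)$. In Witt coordinates, $u$ remembers only the first component $\bx_1$ of $i(\bx_1,\dots,\bx_{n+1})=(0,\bx_1,\dots,\bx_{n+1})$. The kernel therefore consists of points with $\bx_1=0$, and these are the points $V\mathbf{y}$ for $\mathbf{y}\in\mathbb{W}_{n-1}^d$ (taken inside $N^{n+1}\simeq\mathbb{W}_n^d$). On such points I get $\Gamma(V\mathbf{y})=FVV\mathbf{y}-VFV\mathbf{y}$. Since $FV=\pi$ and $F$ commutes with multiplication by $\pi$ (the relation \eqref{Fr_var}), this equals $\pi V\mathbf{y}-V(\pi\mathbf{y})$, which vanishes because $V$ is additive and $\pi$-linear on $\pi$-typical Witt vectors. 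Alternatively, I can verify this directly on ghost components, where $V$ is shift-and-multiply-by-$\pi$ and $F$ is left shift; over a $\pi$-torsion-free base the ghost map is injective, and flatness of the coordinate rings then carries the identity over.

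Second, because $u:N^{n+1}\to N^1$ is a surjective (indeed split, via the coordinate projection) morphism of group schemes with the kernel just described, the additive map $\Gamma$ descends to a morphism $\Delta:N^1\to J^nE$ of $\pi$-formal group schemes. Concretely, I take $\Delta(\bx_1):=\Gamma(\bx_1,0,\dots,0)$ and check that $\Gamma=\Delta\circ u$ using additivity of $\Gamma$ together with the decomposition of any point into its first coordinate plus an element of the kernel. Uniqueness is immediate from surjectivity of $u$, which is an epimorphism of formal schemes because it admits a section.

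Third, I need $A$-linearity of $\Delta$. The maps $i$ and $\mathfrak{f}$ are $A$-linear (the latter as in Proposition 7.2 of \cite{BS b}), and $\phi$ is $A$-linear because $\phi$ fixes $\hat A$. Hence $\Gamma$ is $A$-linear, and for $a\in A$ I have $\Delta\circ\varphi(a)\circ u=\Delta\circ u\circ\varphi(a)=\Gamma\circ\varphi(a)=\varphi(a)\circ\Gamma=\varphi(a)\circ\Delta\circ u$. Cancelling the epimorphism $u$ gives $A$-linearity of $\Delta$. The one subtlety is that $\phi$ and $\mathfrak{f}$ lie over Frobenius rather than the identity, so $\Gamma$ has to be read as a morphism into $J^nE\times_{\phi}\Spf R$, or via relative Frobenius, exactly as in the definition of $\phi^*$.

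I expect the main obstacle to be the first step. The identification $N^{n+1}\simeq \mathbb{W}_n^d$ is only one of group schemes, and the $A$-structure on $N^{n+1}$ is the one making $i$ linear, so I must make sure the kernel computation uses only the additive Witt structure. I will also need to justify $V(\pi\mathbf{y})=\pi V\mathbf{y}$ in this equal-characteristic setting, which I plan to do through ghost coordinates.
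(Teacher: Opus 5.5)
\textbf{Verdict: your proof is correct, and it takes a different route from the paper, which gives no proof of its own.}

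The paper does not prove this statement. It quotes it as Proposition 8.4 of \cite{BS b}, which is stated for Drinfeld modules, and applies it to abelian Anderson modules without further comment.

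Your argument is self-contained, and it makes clear exactly what the result needs:
\begin{itemize}
\item the additive Witt structure under $J^nE\simeq\mathbb{W}_n^d$ and $N^{n+1}\simeq\mathbb{W}_n^d$, with $i=V$ and $\phi=\mathfrak{f}=F$;
\item the identities $FV=\pi$ and $V(\pi y)=\pi V(y)$;
\item the $A$-linearity of $i$, $\phi$, $\mathfrak{f}$ and $u$.
\end{itemize}
This shows explicitly that the Drinfeld-module statement carries over verbatim to admissible $A$-modules of any dimension $d$, which the citation leaves implicit. The citation, in exchange, keeps the paper short.

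Your descent step, writing $x=s(u(x))+(x-s(u(x)))$ with $s$ the coordinate section, is fine. So are uniqueness and $A$-linearity, both obtained by cancelling the split epimorphism $u$.

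A few small corrections:
\begin{itemize}
\item In Step 1 you do not actually use $FFV=FVF$ or the commutation of $F$ with $\pi$. What you use is $FV=\pi$ (twice) together with the $\hat A$-linearity of $V$. The latter follows from $V(F(a)y)=aV(y)$ and the fact that $F$ fixes $\hat A$, or directly on ghost components.
\item In Step 3, the commutation of $\phi$ with $\varphi(a)$ comes from naturality of $\phi:J^{n+1}X\to J^nX$ in $X$, applied to the $S$-endomorphism $\varphi_E(a)$; equivalently, $J^*(\varphi_E(a))$ is a morphism of prolongation sequences. The fact that $\phi$ fixes $\hat A$ is what makes the twist $\hG\times_{\phi}S\simeq\hG$ $A$-linear for characters. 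It is not what makes $\phi$ $A$-linear on jets.
\end{itemize}

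Steps 1 and 2 can also be compressed. Since $VF(x)=V(1)\,x$ and $FV(x)=\pi x$, the map $\Gamma$ is multiplication by $\pi-V(1)$, whose ghost vector is $(\pi,0,\dots,0)$. Hence $\Gamma(x)$ is the Witt vector with ghost vector $(\pi w_0(x),0,\dots,0)$. This visibly depends only on the first Witt coordinate of $x$, that is, on $u(x)$. It also gives $\Delta$ explicitly, in agreement with your $\Delta(\bx_1)=\Gamma(\bx_1,0,\dots,0)$.
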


Let $\mathbf{x}=\bx_0 = (x^{1}, x^{2}, \dots, x^{d})$ be the coordinate system on $E$ , and let $\left(\mathbf{x}_{0},\mathbf{x}_{1},\ldots, \mathbf{x}_{n}\right)$ be the induced Buium--Joyal coordinate functions on $J^{n}E$, where $\mathbf{x}_{i}=(x^{1}_{i}, x^{2}_{i} \dots x^{d}_{i})$ are the $i$-th Witt coordinates associated to $\bx$ as in Section \ref{ExplicitJnE}. 
For any delta character $\Theta \in \bX_n(E)$, consider the differential map $D\Theta : \Lie(J^n E) \map \Lie(\hG)\simeq \hG$ between the tangent spaces at the identity sections. With respect to the Buium--Joyal coordinates, the differential map is given by
\begin{equation}
D\Theta=(P_{0},\ldots,P_{n})
\end{equation}\label{Derivative_matrix}
where $P_j \in \mb{Mat}_{h\times 1} (R)$.
% Let $\Psi_{i}:= \left( \begin{smallmatrix} \Psi_{i1} \\ \vdots \\ \Psi_{ih}  \end{smallmatrix}\right) $ be an $R$-basis of $\bX(N^i)/u^{*}\bX(N^{i-1})$ for each $i\geq 1$.

\begin{proposition}[Proposition $6.3$ in \cite{BS b}]
\label{diff}
Let $\Theta$ be a character in $\bX_n(E)$.
\begin{enumerate}
        \item We have
                $$
                \iota^*\phi^*\Theta = \mfrak{f}^*(\iota^*\Theta)+ \gamma_{\Theta}. \Psi_1,
                $$
                where $\gamma_{\Theta}=\pi P_0$. \\
                
        \item For $n\geq 1$, we have
                $$
                \iota^*(\phi^{\circ n})^*\Theta= (\mfrak{f}^{n-1})^* \iota^*\phi^*\Theta.
                $$
\end{enumerate}
\end{proposition}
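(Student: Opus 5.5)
The plan is to derive both parts from the commutation relation of Theorem \ref{comfact} and from \eqref{comp}. For (1), Theorem \ref{comfact} gives $\phi\circ i - i\circ\mff = \Delta\circ u$ on $N^{n+1}$, where $u:N^{n+1}\to N^1$ and $\Delta:N^1\to J^nE$ is $A$-linear. Composing with $\Theta\in\bX_n(E)$ (via the relative Frobenius, so everything stays $R$-linear over $\Spf R$) gives
$$\iota^*\phi^*\Theta-\mff^*(\iota^*\Theta)=u^*(\Theta\circ\Delta),\qquad \Theta\circ\Delta\in\bX(N^1)=\Hom_A(N^1,\hG).$$
So the difference is a character of $N^1$ pulled back to $N^{n+1}$. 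By Lemma \ref{rank-N^1} and the choice of basis, it can be written as $\gamma\cdot\Psi_1$ for some row vector $\gamma$, with entries a priori in $K$.

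Next I would identify $\gamma$ by passing to Lie algebras. The key input is that an $A$-linear character of $N^1$ is determined by its differential: the map $\bX(N^1)\to\Lie(N^1)^*$, $\Psi\mapsto D\Psi$, is injective with image in the span of $V^\perp$. This is the content of Theorem 3.9 and Proposition 3.10 of \cite{PS-1}, which produced the basis $\Psi_{1j}$ with $D\Psi_{1j}=\pi^{\nu}B_{0j}$. It therefore suffices to compute $D(\Theta\circ\Delta)=D\Theta\circ D\Delta$.

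To get $D\Delta$, I linearize in Buium--Joyal coordinates:
\begin{itemize}
\item The Frobenius is $F(\bx,\bx',\dots)=(\bx^{\hq}+\pi\bx',\dots)$, so $D\phi$ kills the $\bx$-slot and multiplies the remaining slots by $\pi$, shifting them down one place.
\item The map $i$ is the Verschiebung inclusion, which on tangent spaces puts the coordinates of $N^{n+1}$ into slots $1,\dots,n+1$.
\item The map $\mff$ is $F$ transported to $N^{n+1}$.
\end{itemize}
Hence $D(\phi\circ i)$ and $D(i\circ\mff)$ agree in every slot except the $0$-th, where the first contributes $\pi\bx'$ and the second contributes $0$. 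So $D\Delta(\bx')=(\pi\bx',0,\dots,0)$, and therefore $D(\Theta\circ\Delta)=\pi P_0$. Matching this with $\gamma\, D\Psi_1$ gives $\gamma_\Theta=\pi P_0$, under the normalization of $\Psi_1$ used in the statement (the $\pi^\nu$ being absorbed into the identification $V^\perp\simeq\Lie(E)^*$).

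I expect this step to be the main obstacle. It requires a careful check that the linearizations of $i$ and $\mff$ carry no stray powers of $\pi$ coming from the Witt-to-Buium--Joyal change of coordinates. It also requires checking that the differential really determines the character over $R$ and not just over $K$, which is where the injectivity of $\bX(N^1)\to V^\perp$ from \cite{PS-1} is used.

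For (2), I would prove by induction on $k$ that $\phi\circ i\circ\mff^{\circ k}=\phi^{\circ(k+1)}\circ i$. The case $k=1$ is \eqref{comp}. For the inductive step,
$$\phi\circ i\circ\mff^{\circ(k+1)}=(\phi\circ i\circ\mff)\circ\mff^{\circ k}=\phi\circ(\phi\circ i\circ\mff^{\circ k})=\phi^{\circ(k+2)}\circ i.$$
Taking $k=n-1$ gives $\phi^{\circ n}\circ i=\phi\circ i\circ\mff^{\circ(n-1)}$. Pulling back $\Theta$ along both sides, and noting that relative pullbacks compose compatibly with the Frobenius twists, yields $\iota^*(\phi^{\circ n})^*\Theta=(\mff^{n-1})^*\iota^*\phi^*\Theta$.
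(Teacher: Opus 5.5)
The paper gives no proof of this proposition; it is quoted from \cite{BS b}. Your argument is correct, and it is the natural route. Part (2) is just the iteration of (\ref{comp}). For part (1), the factorization $\phi\circ i-i\circ\mff=\Delta\circ u$ of Theorem \ref{comfact} gives $\iota^*\phi^*\Theta-\mff^*\iota^*\Theta=u^*(\Delta^*\Theta)$ with $\Delta^*\Theta\in\bX(N^1)$. This is exactly the form $\Delta^*\Theta_j=\gamma_j\cdot\Psi_1$ in which the paper later uses the result (cf.\ (\ref{commute})).

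The obstacle you anticipated does not arise. In both Witt and Buium--Joyal coordinates, the $i$-th ghost component has linear part $\pi^i$ times the $i$-th coordinate. So the change of coordinates is tangent to the identity, and $D\Delta(\bx')=(\pi\bx',0,\dots,0)$ holds as you claim. Nor do you need injectivity of $D$ on all of $\bX(N^1)$. Since $\Psi_1$ is an $R$-basis, apply $D$ to $\Delta^*\Theta=\gamma\cdot\Psi_1$; the linear independence of the $D\Psi_{1j}=\pi^\nu B_{0j}$ already pins down $\gamma$.

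One correction is needed in the final matching step. The map $\Delta$, like $\phi$ and $\mff$, lies over the Frobenius of $S$. So $\Delta^*\Theta$ is formed with the $\phi$-twisted coefficients of $\Theta$, and its linear term is $\pi\,\phi(P_0)$, not $\pi P_0$. Writing $D(\Theta\circ\Delta)=D\Theta\circ D\Delta$ drops this twist.

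The twist is forced by the statement itself. Identity (1), together with $\phi^*(c\Theta)=\phi(c)\phi^*\Theta$ and the semilinearity of $\mff^*$, gives $\gamma_{c\Theta}=\phi(c)\gamma_\Theta$ for $c\in R$. By contrast, $P_0$ is $R$-linear in $\Theta$. So what your computation actually establishes is $D(\gamma_\Theta\cdot\Psi_1)=\pi\,\phi(P_0)$. The formula ``$\gamma_\Theta=\pi P_0$'' has to be read modulo this twist and the $\pi^\nu$-normalisation of $\Psi_1$. This does not affect the paper's conclusions, such as invertibility of $\Gamma$ over $K$, because $\phi$ is treated there as an automorphism of $R$.
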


By Proposition \ref{diff} and Proposition $8.1 (a)$ of \cite{PS-1}, the $R$-linear map admits the following explicit description:
\begin{align*}
\Upsilon: \bXp(E) &\to \Lie(E)^*\\
\Theta &\mapsto \gamma_{\Theta}/\pi
\end{align*}
where $\gamma_{\Theta}$ is as in the Proposition above.

Let $\BB= \{\Theta_{1},\dots, \Theta_{h}\}$ denote a primitive basis for $\bX_\infty(E)$, and let $o_1,\dots, o_h$ be the respective orders of these delta characters. Throughout the remainder of this section, we abbreviate $\gamma_{\Theta_j}$ as $\gamma_j$.  
By Theorem \ref{comfact} and Proposition \ref{diff}, the morphism $\Delta : N^1 \map J^{r}E$ yields the pull-back of the primitive characters for all $j=1,\dots, h$:
$$\Delta^*\Theta_j:N^1E \map \hG$$
which are explicitly given by
\begin{align}
\label{commute}
\Delta^*\Theta_j = \gamma_j\cdot\Psi_1
\end{align}
where $\gamma_j = \pi P_{0j}$.

For any integer $n \geq m_{\mathrm{u}}$ and any character $f \in \bX_n(E)$, we can write
\begin{align}
f = \sum_{j=1}^h \left(\sum_{s=0}^{o_j} c_{sj}\phi^{\circ(n-s)*}\Theta_{j}
\right).
\end{align}
Applying Proposition \ref{diff}, we note that $\Delta^*(\phi^*\Theta)=0$ and we subsequently obtain
\begin{align}
\label{phiup}
\Delta^*f &
= (\phi(c_{01}) \gamma_{i_1} + \cdots + \phi(c_{0r})\gamma_{i_h})\cdot\Psi_1.
\end{align}

Let $\oE$ denote the special fiber of $E$ over the residue field $l$ of $R$, and let $\overline{F}$ be the geometric Frobenius on $\oE$. Suppose 
$$
P(X)= X^{r} + a_{r-1}X^{r-1} + \cdots + a_1 X + a_0
$$ 
is the characteristic polynomial in $A[X]$ such that $a_0\neq 0$ and $P(\overline{F})=0$ in $\End(\oE)$ (cf. Section 3.7 in \cite {Hu-Pa}). 
For any $\pi$-adically complete $R$-algebra $B$, we have the following commutative diagram:
\begin{equation}\label{frob-diag}
\xymatrix{
\bb{W}_{r}^h(B) \ar[r]^-\sim &J^{r}(\hG^h)(B) \ar[d]_-{P(\mfrak{f})} 
 & N^{r+1}(B) \ar[l]_{J^{r}(\Psi_1)}
\ar[r]^-{\phi\circ \iota} \ar[d]_-{P(\mfrak{f})} & J^{r}E(B) \ar[r] 
\ar@{.>}[dl]_{\mf g} \ar[d]_-{P(\mfrak{\phi})} & \oE(B\otimes k) 
\ar[d]_-{P(\overline{F})} \\
& \hG^h(B) \ar[d]_{\mathrm{pr_j}} & N^1(B) \ar[l]_{\Psi_1}
\ar[r]^-{\phi \circ \iota} & E(B) \ar[r]^-{\mf h} & \oE(B\otimes k)\\
& \hG (B) & & & 
}
\end{equation}
where $\mathrm{pr}_j: \hG^h \map \hG$ is the projection onto the 
$j$-th component for all $j=1,\dots,h$ and ${\mf h}: E(B) \map \oE(B/\pi B)$ 
is the natural group homomorphism
induced by the reduction mod $\pi$ given by $B \map B/\pi B$.

Observe that $\mathrm{Im(\phi \circ \iota)} = \mathrm{Ker}(\mf h)$. Since $P(\overline{F})=0$, the image of the morphism $P(\phi)$ lands inside $\ker {\mf h}$ and
hence induces the map
$\mf g:J^{r}E(B) \map N^1(B)$ as illustrated above.

\begin{theorem}
\label{isoupsilon}
The map $\Upsilon: \bXp(E) \map \mathrm{Lie} (E)^{*} $ is an injective morphism of $R$-modules with a $\pi$-torsion cokernel.

In particular, $\Upsilon_K: \bXp(E)_K \map \mathrm{Lie} (E)^{*}_K$ is an isomorphism of $K$-vector spaces.
\end{theorem}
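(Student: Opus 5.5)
Both $\bXp(E)$ and $\mathrm{Lie}(E)^*$ are free $R$-modules of rank $h$ (Theorem \ref{finite_dim_thm}(3) and the proposition from \cite{PS-1} on $\Lie(E)^*$). So an $R$-linear map between them is injective with $\pi$-torsion cokernel exactly when its determinant is nonzero, i.e.\ when it is injective, or equivalently when $\Upsilon_K$ is injective. The plan is therefore to show that $\Upsilon(\Theta)=\gamma_\Theta/\pi = P_0$ vanishes only for $\Theta\in \bX(E)+\phi^*(\bX_{n-1}(E))_\phi$. Equivalently, we must show that the vectors $\gamma_1,\dots,\gamma_h$ attached to a primitive basis $\BB=\{\Theta_1,\dots,\Theta_h\}$ are $K$-linearly independent in $\Lie(E)^*_K\simeq V^\perp_K$.

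Suppose instead that $\sum_j c_j\gamma_j=0$ with $c_j\in R$ not all zero. Since $\phi$ is an automorphism of $R$, we may write $c_j=\phi(c'_j)$ and form $f=\sum_j c'_j\,\phi^{\circ(n-o_j)*}\Theta_j$ for large $n$. By (\ref{phiup}), $\Delta^*f=0$. We then use the commutative diagram (\ref{frob-diag}). The relation $P(\overline F)=0$ in $\End(\oE)$ gives the factorization $P(\phi)=\phi\circ\iota\circ\mf g$ on $J^rE$. Combined with Theorem \ref{comfact} ($\phi\circ i=i\circ\fra+\Delta\circ u$ on $N^{n+1}$), this lets us express $P(\phi)^*f$ in terms of $\fra$-pullbacks of $\iota^*f$ and of $\Delta^*f$-type contributions. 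The aim is to show that $\Delta^*f=0$ forces $\iota^*\phi^*f=\fra^*\iota^*f$ by Proposition \ref{diff}(1). Iterating Proposition \ref{diff}(2) and applying $P$, we get that $P(\phi)^*f$ factors through $\mf g^*$ of a character of $N^1$ lying in the span of $\Psi_1$.

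Next we compare coefficients with respect to the $K$-basis $\{\Psi_1,\dots,\Psi_n\}$ of $\Hom_A(N^n,\hG^h)_K$ from Lemma \ref{ind-psi}; the leading terms live in distinct degrees $\hq^{i-1}$. Because $a_0\neq 0$ in $P$, the constant term of $P(\phi)^*$ is invertible over $K$. Together with the dimension count of \cite{PS-1}, Theorem 4.2, this should give $f\in \bX(E)_K+\phi^*(\bX_{n-1}(E)_K)_\phi$. That contradicts the primitivity of the $\Theta_j$ and their independence modulo these subspaces. Hence the $\gamma_j$ are independent, $\Upsilon_K$ is injective, and since both sides have dimension $h$, $\Upsilon_K$ is an isomorphism. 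Injectivity of $\Upsilon$ itself then follows from the freeness of $\bXp(E)$, and the cokernel is torsion by the rank equality.

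The main obstacle is the middle step: converting $\Delta^*f=0$ into membership of $f$ in $\bX(E)_K+\phi^*(\cdots)$. This requires tracking precisely how $\Delta$, $\fra$ and the induced map $\mf g$ interact, and checking that the leading-term argument of Lemma \ref{ind-psi} survives the twisting by $\phi$ on the coefficients. If this route stalls, I would fall back on the exact sequence (\ref{connecting}) and the injectivity of $\Phi$ modulo $\iota^*\phi^*$ from Proposition 8.1 of \cite{PS-1}. Concretely, if $\Upsilon(\Theta)=0$, then $\iota^*\phi^*\Theta=\fra^*\iota^*\Theta$, so the class of $\iota^*\Theta$ is $\fra^*$-stable. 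The $\fra$-degree filtration from Lemma \ref{ind-psi} should then force $\iota^*\Theta$ into $\iota^*\phi^*(\bX)$ over $K$.
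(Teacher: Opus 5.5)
Your reduction to proving that $\Upsilon_K$ is an isomorphism is fine, and it is also where the paper starts. However, the central step is missing.

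Once you have $f$ with $\Delta^*f=0$, nothing in your sketch turns this into $f\in\bX(E)_K+\phi^*(\bX_{n-1}(E)_K)_\phi$. You never say what $P(\phi)^*f$ is as a character, or how it relates to $f$ modulo $\bX(E)+\phi^*(\cdots)$. The phrase ``the constant term is invertible, so this should give'' is not a checkable step.

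Your fallback is circular. If $\gamma_\Theta=0$, then $\iota^*\phi^*\Theta=\fra^*\iota^*\Theta$. Since $\iota^*\phi^*\Theta$ is zero in $\bH(E)$, this says that $\fra^*$ \emph{kills} the class of $\iota^*\Theta$; it does not say the class is stable. To conclude that the class vanishes you would need $\fra^*$ to be injective on $\bH(E)_K$. But comparing $\Psi_1$-coefficients via Proposition \ref{diff}(1) and Lemma \ref{ind-psi} shows that a class is killed by $\fra^*$ exactly when it is represented by some $\iota^*\Theta$ with $\gamma_\Theta=0$. So that injectivity is the very statement you are trying to prove. Indeed, the paper derives Theorem \ref{bijfra} from the invertibility of $\Gamma$ established in this proof.

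The missing idea is to use $\mf g$ to \emph{produce} characters rather than to pull $f$ back, and to prove surjectivity rather than injectivity. By your rank count, surjectivity suffices.

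\begin{itemize}
\item Put $\tTheta_j:=\pr_j\circ\Psi_1\circ\mf g\in\bX_r(E)$.
\item From (\ref{frob-diag}), $(\phi\circ\iota)^*\tTheta_j=\pr_j\circ(\Psi_{r+1}+a_{r-1}\Psi_r+\cdots+a_0\Psi_1)$.
\item By Proposition \ref{diff}(1), $\iota^*\phi^*\tTheta_j=\fra^*\iota^*\tTheta_j+\gamma_{\tTheta_j}\Psi_1$. By Lemma \ref{ind-psi}, $\fra^*\iota^*\tTheta_j$ lies in the $K$-span of $\Psi_2,\dots,\Psi_{r+1}$.
\item Comparing $\Psi_1$-coefficients gives $\gamma_{\tTheta_j}=a_0e_j$. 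Since $a_0\neq0$, the image of $\Upsilon_K$ contains a basis of $\Lie(E)^*_K$.
\item Expanding each $\tTheta_j$ in the primitive basis and using (\ref{phiup}) expresses $a_0\mathbbm{1}_h$ as a product of $\Gamma=(\gamma_1,\dots,\gamma_h)^t$ with a coefficient matrix. Hence $\Gamma$ is invertible over $K$.
\end{itemize}

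This last point also refutes your hypothetical relation $\sum_jc_j\gamma_j=0$ directly.
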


\begin{proof}
It suffices to demonstrate that $\Upsilon_K$ is an isomorphism of $K$-vector spaces.
For all $j=1,\dots, h$, define the delta character of order $r$ as $\tTheta_j:J^{r}E \map \hG$ given by
\begin{align}
\tTheta_j:= \mathrm{pr}_j \circ \Psi_1 \circ \mf g.
\end{align}
From the preceding commutative diagram \eqref{frob-diag}, we have $(\phi \circ \iota)^*\mf g=P(\mff)$ and deduce 
\begin{align}
(\phi \circ \iota)^* \tTheta_j:= (\phi \circ \iota)^*(\mathrm{pr}_j \circ \Psi_1 \circ \mf g)= \pr_j \circ \Psi_1 \circ P(\fra).
\end{align}
Note that the morphism $\Psi_1 \circ P(\fra): N^{r+1}E \map \hG^h$ evaluates as
\begin{align}
\Psi_{r+1}+ a_{r-1}\Psi_{r} + \cdots + a_0 \Psi_1.
\end{align}
for some $a_i\in R.$
Consequently, we have
\beqar
(\phi \circ \iota)^* \tTheta_j &=& \pr_j \circ (\Psi_1 \circ P(\fra))\\
&=& \pr_j \circ (\Psi_{r+1}+ a_{r-1}\Psi_{r} + \cdots + a_0 \Psi_1 )\\
&=& \pr_j \circ (\Psi_{r+1}+ a_{r-1}\Psi_{r} + \cdots + a_1 \Psi_2) 
+ \pr_j \circ a_0 \Psi_1. \\
\eeqar
By Proposition \ref{diff} (1) we have 
\begin{align*}
\fra^* (\iota^* \tTheta_j) &= (\phi \circ \iota)^* \tTheta_j -\gamma_{\tTheta_j }\\
&=\pr_j \circ (\Psi_{r+1}+ a_{r-1}\Psi_{r} + \cdots + a_1 \Psi_2)+(a_0 e_j-\gamma_{\tTheta_j })\cdot \Psi_1.
\end{align*}
where $e_j$ represents the $j$-th row of the $h\times h$ identity matrix $\mathbbm{1}_{h}$.
On the other hand, by Lemma \ref{ind-psi}, we know $\{\Psi_1,\ldots,\Psi_r\}$ forms a $K$-basis of $\Hom_A(N^r,\hG^{h}).$ This in particular implies: for some $b_i\in K,$ we have
\begin{align*}
\fra^* (\iota^* \tTheta_j)&=\mff^*(b_r\Psi^r+\ldots+b_1\Psi_1)\\
&=\phi(b_r)\Psi_{r+1}+\ldots+\phi(b_1)\Psi_2
\end{align*}
Therefore, comparing both the representations of $\fra^* (\iota^* \tTheta_j)$, we obtain $( a_0 e_j-\gamma_{\tTheta_j })\cdot \Psi_1=0$ and
\begin{align}
\label{phi-up2}
\Delta^* \tTheta_j := (\phi \circ \iota - \iota \circ \fra)^*\tTheta_j =
 a_0 e_j\cdot \Psi_1
\end{align}

Whereas, according to (\ref{phiup}), for each $j$ there exist coefficients $c_{1j},\ldots, c_{hj} \in R$ such that
\begin{align}
\label{phi-up3}
\Delta^* \tTheta_j &= (\phi(c_{1j}) \gamma_1 + \ldots + 
\phi(c_{hj}) \gamma_h)\cdot \Psi_1.
\end{align}
By equating expressions (\ref{phi-up2}) and (\ref{phi-up3}) for all $j$, we deduce that 
\begin{align}
\label{phi-up4}
\left(\begin{smallmatrix} \gamma_1  \\ \vdots \\  \gamma_h \end{smallmatrix}\right)\cdot C = a_0 \mathbbm{1}_h
\end{align}
where $C= (\phi(c_{ij})) \in \Mat_{h\times h}(R)$. Since $a_0$ is the non-zero constant coefficient of the characteristic polynomial of Frobenius on the special fiber of $E,$ the matrix $\left(\begin{smallmatrix} \gamma_1  \\ \vdots \\  \gamma_h \end{smallmatrix}\right)$ is invertible in $K.$

This implies the map $\Upsilon$ to be invertible in $K$, and we conclude that $\Upsilon:\bXp(E)_K \simeq \mathrm{Lie} (E)^{*}_K$.
\end{proof}

\section{The $z$-isocrystal $\bH_{\d}(E)$ and its Hodge--Pink structure}
\label{LocalShtuka}
In this section, we associate to any Anderson module $E$ a canonical $z$-isocrystal equipped with a Hodge--Pink structure, denoted by $(\wHH, \qH,\fra^*)$, and in particular prove the non-degeneracy of $\mff^*.$ 

In the equal characteristic setting, the $z$-isocrystals with Hodge--Pink structures introduced by Pink serve as the natural analogs of filtered isocrystals in $p$-adic Hodge theory. 
Let $l$ denote the residue field of $R$, and let $K$ denote the fraction field of $R$. We denote the lift of the Frobenius endomorphism $\sigma$ on $l[[z]]$ by the assignments $z\mapsto z$ and $a\mapsto a^q$ for any $a\in l$. Additionally, consider the ring homomorphism $l((z)) \map K[[z - \pi]]$ defined by $z \mapsto \pi + (z - \pi)$.

\begin{definition} A \textit{$z$-isocrystal} over $l$ is a pair $(D,F_{D})$ such that:
\begin{itemize}
\item[(i)] $D$ is a finite-dimensional vector space over $l((z))$;
\item[(ii)]  $\sigma^{*}D=D\otimes_{\sigma,~l((z))}l((z))$, and $F_{D}:\sigma^{*}D\longrightarrow D$ is an isomorphism of $l((z))$-vector spaces.
\end{itemize}
\end{definition}

Suppose $E$ is an Anderson $A$-module of dimension $d$ and rank $r$ over $R$. By Theorem \ref{finite_dim_thm}, $\HH(E)$ is a finite free module over $R\simeq l[[z]]$. Under this identification, the lift of the Frobenius endomorphism $\phi:l[[z]]\longrightarrow l[[z]]$ acts by fixing $z$ and sending $a\mapsto a^{q}$ for all $a\in l$.
We define the $l((z))$-vector space $\wHH:=\bfH\otimes l((z))$. 
The semilinear operator $\fra^*$ naturally extends to $\wHH$.
It follows immediately from the definitions that $(\wHH,\mathfrak{f}^{*})$ constitutes a $z$-isocrystal provided that $\det(\mathfrak{f}^{*})$ is nonzero; this non-degeneracy condition will be rigorously established in the subsequent subsection.

\subsection{Non-degeneration of the $z$-isocrystal $\bH_{\d}(E)$}
As an essential consequence of Theorem \ref{isoupsilon}, we will show that the semilinear operator $\fra^*$ on $\bH(E)_K$  is a bijective map—which confirms that $\bH_{\d}(E)$ is indeed a $z$-isocrystal.

Let $\{\Theta_1,\dots, \Theta_h\}$ be primitive characters such that $o_i = \ord{\Theta_i} \leq \mup$ for all $i=1,\dots , h$. The existence of such a basis is guaranteed by Theorem 7.6 of \cite{PS-1}. The module $\bXp(E)_K$ is then generated by the images of these $\Theta_i$ over $K$. 

For any delta character $\Theta$, define the set:
$$
S_n(\Theta):= \{\phi^{*(n-i)}\Theta \mid i = \ord{\Theta},(\ord{\Theta}+1), 
\dots , n\}
$$
for all $n \geq \ord{\Theta}$. Recall from page 418 of \cite{BS b} that for all $n\geq \mup$, the sets $S_n(\Theta_1),\dots, S_n(\Theta_h)$ are $K$-linearly independent and freely generate $\bX_n(E)_K$ as a $K$-module.

Define the delta characters $\tTheta_i:= \phi^{*(n-o_i)}\Theta_i$ for all $n\geq \mup$ and $i=1,\dots, h$.
Consequently, the quotient $K$-module $\bX_n(E)_K/u^*\bX_{n-1}(E)_K$ is generated by the set $\{\tTheta_1,\dots ,\tTheta_h\}$.

Suppose for all $i=1,\dots, h$ we can write
\begin{align}
\iota^* \tTheta_i = 
\lam_{i{\mup}}\Psi_{\mup} - \lam_{i({\mup} -1)}\Psi_{{\mup} -1} 
- \cdots - \lam_{i1} \Psi_1
\end{align}
where $\lam_{ij} \in \Mat_{1\times h}(R)$ for all $j = 1,\dots \mup$.
This yields 
\begin{align}
\fra^* \iota^* \tTheta_i = 
\lam_{i{\mup}}^\phi\Psi_{{\mup}+1} - \lam_{i({\mup} -1)}^\phi\Psi_{{\mup}} 
- \cdots - \lam_{i1}^\phi \Psi_2.
\end{align}
By Proposition $\ref{diff}$, there exist matrices $\gamma_i \in \Mat_{1\times h}(R)$ such that
\begin{align}
\label{phimu}
\nonumber
\iota^*\phi^* \tTheta_i &= \fra^*\iota^* \tTheta_i + \gamma_i \Psi_1\\
&= 
\lam_{i{\mup}}^\phi\Psi_{{\mup}+1} - \lam_{i({\mup} -1)}^\phi\Psi_{{\mup}} 
- \cdots - \lam_{i1}^\phi \Psi_2 + \gamma_i\Psi_1.
\end{align}
For each $j=1,\dots ,\mup$, define the $h \times h$ matrices 
$$\Lambda_{j}:= \left(\begin{matrix}
\lam_{1j}^\phi \\ \vdots \\ \lam_{hj}^\phi \\
\end{matrix}
\right)$$
and 
$$\Gamma:= \left(\begin{matrix}
\gamma_1 \\ \vdots \\ \gamma_h \\
\end{matrix}
\right).$$
Therefore, the induced map
$$
[\iota^*\phi^*]:\bX_n(E)_K/u^*\bX_{n-1}(E)_K {\longrightarrow}
\bX(N^{n+1})_K/u^*\bX(N^n)_K$$
is completely determined by
$[\iota^*\phi^*] \tTheta_i = \lam^\phi_{i\mup} \Psi_{{\mup}+1}$ for all $i=1,\dots, h$. In terms of the basis $\{\tTheta_1,\dots,\tTheta_h\}$ of $\bX_{\mup}(E)/u^*\bX_{{\mup}-1}(E)$ and the basis $\{\Psi_{\mup+1}\}= \{\Psi_{(\mup+1)1},\dots , \Psi_{(\mup+1)h}\}$ of 

\noindent
$\bX(N^{\mup +1})/u^*\bX(N^{\mup})$, the map $[\iota^*\phi^*]$ is represented by:
\begin{align}
[\iota^*\phi^*] = \Lambda_{\mup}.
\end{align}

%Recall the following result
%\begin{theorem}[Proposition 8.6 in \cite{BS b}.]\label{frob_bij}
 %$${
%\xymatrix{
%{\bX}_{n}(E)/{\bX}_{n-1}(E)\ar@{^{(}->}[d]_{i^{*}} \ar@{^{(}->}[r]^{\phi^{*}} &{\bX}_{n+1}(E)/{\bX}_{n}(E)\ar@{^{(}->}[d]^{i^{*}}  \\
%{\bX}(N^{n})/{\bX}(N^{n-1}) \ar[r]^{\mathfrak{f}^{*}}_{\sim} &{\bX}(N^{n+1})/{\bX}(N^{n}) }}
%$$
%where $i^{*}$ and $\phi^{*}$ are injective, and $\mathfrak{f}^{*}$ is bijective. 
%\end{theorem}
The following proposition can be proven similarly to Proposition $8.1$ in \cite{BS a}.
\begin{proposition}
\label{Lambdamu}
For all $n \geq \mup$, the natural map
$$
\bX_n(E)_K/u^*\bX_{n-1}(E)_K \stk{[\iota^* \phi^*]}{\longrightarrow}
\bX(N^{n+1}E)_K/u^*\bX(N^nE)_K
$$
is an isomorphism. 

In particular, for $n=\mup$, the map 
$$
\bX_{\mup} (E)_K/u^*\bX_{\mup-1}(E)_K \stk{[\iota^* \phi^*]}{\longrightarrow}
\bX(N^{{\mup} +1})_K/u^*\bX(N^\mup)_K
$$
is an isomorphism, which implies that $\Lambda_{\mup}$ is an invertible matrix over $K$.
\end{proposition}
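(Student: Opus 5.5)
The plan is to compare $K$-dimensions and then prove either injectivity or surjectivity of $[\iota^*\phi^*]$; the analogue of Proposition 8.1 in \cite{BS a} proceeds the same way.

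For the target, Lemma \ref{ind-psi} says $\{\Psi_1,\dots,\Psi_{n+1}\}$ is a $K$-basis of $\Hom_A(N^{n+1},\hG^h)_K$. Since $u^*\bX(N^n)_K$ is spanned by $\Psi_1,\dots,\Psi_n$, the quotient $\bX(N^{n+1})_K/u^*\bX(N^n)_K$ has dimension $h$, with basis $\Psi_{(n+1)1},\dots,\Psi_{(n+1)h}$. For the source, for $n\geq\mup$ the sets $S_n(\Theta_1),\dots,S_n(\Theta_h)$ freely generate $\bX_n(E)_K$, as recalled from \cite{BS b}. Hence $\dim_K\bX_n(E)_K-\dim_K\bX_{n-1}(E)_K=h$, and the classes of $\tTheta_i=\phi^{*(n-o_i)}\Theta_i$ form a basis of the source. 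The map is well defined because $\iota^*\phi^*u^*=u^*\iota^*\phi^*$. So both sides have dimension $h$, and it suffices to prove injectivity.

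For injectivity, suppose $\Theta\in\bX_n(E)_K$ satisfies $\iota^*\phi^*\Theta\in u^*\bX(N^n)_K$. I want to show $\Theta\in u^*\bX_{n-1}(E)_K$. Write $\iota^*\Theta=\sum_{k\le n}\mu_k\Psi_k$. By Proposition \ref{diff}(1),
\[
\iota^*\phi^*\Theta=\fra^*\iota^*\Theta+\gamma_\Theta\Psi_1=\sum_{k\le n}\mu_k^\phi\Psi_{k+1}+\gamma_\Theta\Psi_1 .
\]
The hypothesis forces $\mu_n^\phi=0$, and since $\phi$ is an automorphism, $\mu_n=0$. Thus $\iota^*\Theta\in u^*\bX(N^{n-1})_K$.

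It remains to lift this statement from the kernel back to $J^nE$. I would use the exact sequence \eqref{connecting} at levels $n-1$ and $n$, compatibly with $u^*$. The element $\iota^*\Theta$ lies in $u^*\bX(N^{n-1})_K$, say $\iota^*\Theta=u^*\Psi$, and its connecting image $\partial(\iota^*\Theta)=\partial(u^*\Psi)$ vanishes in $\Ext$. By compatibility of $\partial$ with $u^*$, this gives $\partial\Psi=0$. Hence $\Psi=\iota^*\Theta'$ for some $\Theta'\in\bX_{n-1}(E)_K$. Then $\Theta-u^*\Theta'$ restricts to zero on $N^n$, so it lies in $\bX(E)_K\subset u^*\bX_{n-1}(E)_K$, which gives injectivity.

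For the particular case $n=\mup$, the formula \eqref{phimu} shows that in the bases $\{\tTheta_i\}$ and $\{\Psi_{\mup+1}\}$ the map is represented by $\Lambda_{\mup}$. An isomorphism then forces $\Lambda_{\mup}$ to be invertible over $K$.

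The main obstacle is the compatibility step: that $\partial$ commutes with $u^*$, via the functoriality of the pushout along $u:N^n\to N^{n-1}$ and $J^nE\to J^{n-1}E$ over the identity of $E$. I would verify this directly from the pushout description of $E^*_\Psi$.
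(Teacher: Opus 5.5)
Your proposal is correct and is essentially the argument the paper defers to via Proposition 8.1 of \cite{BS a}: on graded pieces you factor $[\iota^*\phi^*]=[\fra^*]\circ[\iota^*]$, get injectivity of $[\iota^*]$ from the connecting map $\partial$ and its compatibility with $u^*$, use that $[\fra^*]$ is the coordinatewise $\phi$-shift $\mu\Psi_n\mapsto\mu^\phi\Psi_{n+1}$, and finish with the dimension count ($h$ on both sides, from Lemma \ref{ind-psi} and the primitive basis). One caveat: upgrading injectivity of a $\phi$-semilinear map to bijectivity uses that $\phi$ is an automorphism of $K$, as the paper assumes; invertibility of $\Lambda_{\mup}$, however, already follows from injectivity alone, since its rows are the $\phi$-images of the $\lam_{i\mup}$.
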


\subsection{The operator $\fra^*$}
Let us review the explicit description of the operator $\fra^*$ on $\bH(E)_K$.
Recall that 
$$
\bH(E)_K = \varinjlim_{n} 
\frac{\bX(N^n)_K}{\iota^* \phi^* \bX_{n-1}(E)_K}
$$
and that $\fra^*$ on $\bH(E)_K$ is induced from the pullback on the delta characters of $N^nG$, given by $\fra^* \Psi_i = \Psi_{i+1}$ for all $i \geq 1$.
Because $\fra^*(\iota^* \phi^* \bX_{n-1}(E)) \subset \iota^* \phi^* \bX_n(E)$ for all $n\geq 1$, $\fra^*$ cleanly descends to a semilinear operator on $\bH(E)_K$.

By equation (\ref{phimu}), we have 
\begin{align}
\fra^* \Psi_{\mup} = \Psi_{\mup+1} \equiv
\Lambda_{\mup}^{-1}\left( 
\Lambda_{\mup-1}\left(\Psi_{\mup} + \cdots + \Lambda_1 \Psi_2 - \Gamma \Psi_1\right)\right) \bmod
\iota^*\phi^* \bX_{\mup-1}(E)_K
.\nonumber
\end{align}
Employing the same structural arguments as in Proposition $8.1$ of \cite{BS a}, we establish that for all $n \geq \mup$: 
$$
\bH(E)_K \simeq \bH_n(E)_K \simeq \bH_{\mup}(E)_K.
$$
where $\bH_{\mup}(E)_K \simeq K\langle\Psi_1,\dots , \Psi_{\mup}\rangle/ \iota^*\phi^* \bX_{\mup -1}(E)$.

Consider the associated operator (still denoted by $\fra^*$) on $\Hom(N^{\mup},\hG)_K \simeq K\langle \Psi_1,\dots , \Psi_{\mup}\rangle$, defined explicitly as:
\begin{align}
\fra^*(\Psi_i) = \Psi_{i+1} \mb{ for all } i= 1, \dots, \mup -1, \mb{ and }
\nonumber \\
\fra^* \Psi_{\mup} = \Lambda_{\mup}^{-1} \left({\Lambda_{\mup-1}} 
\Psi_{\mup} + \cdots + \Lambda_1 \Psi_2 - \Gamma \Psi_1\right).\nonumber
\end{align}
This explicit map precisely descends to $\fra^*$ on the quotient $\bH(E)_K$.
With respect to the block basis $\{\Psi_1, \dots, \Psi_{\mup}\}$, the $(h\mup  \times h\mup)$-matrix representing $\fra^*$ is given by: 
\begin{align}
[\fra^*] = 
\left(
\begin{matrix}
[0]_h & [0]_h & \cdots & & - \Lambda_{\mup}^{-1}\Gamma \\
[\mathbbm{1}]_h & [0]_h &  & & \Lambda_{\mup}^{-1}\Lambda_1 \\
[0]_h & [\mathbbm{1}]_h &  & &   \\
\vdots & & \ddots & & \vdots \\
[0]_h & [0]_h & \cdots  &[\mathbbm{1}]_h & 
\Lambda_{\mup}^{-1}\Lambda_{\mup-1} \\
\end{matrix}
\right)
\end{align}
where $[0]_h$ and $[\mathbbm{1}]_h$ denote the $h \times h$ zero matrix and identity matrix, respectively. 

\begin{theorem}
\label{bijfra}
The semilinear operator $\fra^*: \bH(E)_K \map \bH(E)_K$ is a bijection.
\end{theorem}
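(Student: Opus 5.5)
The plan is to work with the explicit block companion matrix $[\fra^*]$ written just before the statement. Since $\bH(E)_K$ is a finite-dimensional $K$-vector space and $\fra^*$ is $\phi$-semilinear with $\phi$ an automorphism of $K$ (it acts on $l((z))$ by $z\mapsto z$ and $a\mapsto a^q$ on the residue field; we use that it is bijective, as in the proof of Lemma \ref{ind-psi}), bijectivity of $\fra^*$ is equivalent to the non-vanishing of the determinant of its matrix in any basis. Injectivity and surjectivity then coincide by dimension count, because the $K$-span of the image equals the image.

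\textbf{Step 1: reduction to $\Gamma$.} We first argue on the lift of $\fra^*$ to $\Hom(N^{\mup},\hG)_K\simeq K\langle \Psi_1,\dots,\Psi_{\mup}\rangle$ given by the matrix $[\fra^*]$. Expanding the determinant of the block companion matrix along its block structure gives
\[
\det[\fra^*]=\pm\det(\Lambda_{\mup}^{-1}\Gamma)=\pm\det(\Lambda_{\mup})^{-1}\det(\Gamma).
\]
Proposition \ref{Lambdamu} says $\Lambda_{\mup}$ is invertible over $K$, so it remains to show $\det\Gamma\neq 0$.

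\textbf{Step 2: $\Gamma$ is invertible, the main point.} The rows $\gamma_i$ of $\Gamma$ are $\gamma_{\tTheta_i}$, where $\tTheta_i=\phi^{*(n-o_i)}\Theta_i$. We relate these to $\gamma_{\Theta_i}$ using Proposition \ref{diff}: $\gamma_\Theta=\pi P_0$ is read off from the $\bx$-component of $D\Theta$. Pulling back by $\phi^*$ acts on the first-order part through $F$, and $F$ has zero linear term in $\bx$ since $\hq>1$. So the naive expectation is that $\gamma$ of a $\phi^*$-pullback is $\phi$ applied to something coming from the original character.

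Before relying on this, I would recheck, via Proposition \ref{diff}(2) and equation (\ref{phiup}), which $\gamma$'s actually enter (\ref{phimu}). In the paper's convention the term $\gamma_i\Psi_1$ is exactly $\Delta^*\tTheta_i$, and by (\ref{phiup}) this equals the image under $\phi$ of a $\Upsilon$-value of the primitive component. So $\Gamma=\Phi'\cdot(\gamma_{\Theta_1};\dots;\gamma_{\Theta_h})$ with $\Phi'$ a triangular transition matrix relating $\tTheta_i$ to the $\Theta_i$, whose diagonal entries are nonzero. The matrix $(\gamma_{\Theta_j})$ is $\pi$ times the matrix of $\Upsilon$ in the bases $\{\Theta_j\}$ and $\{B_{0j}\}$. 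By Theorem \ref{isoupsilon} it is invertible over $K$; equivalently, this follows directly from (\ref{phi-up4}), which gives $(\gamma_j)\cdot C=a_0\mathbbm{1}_h$ with $a_0\neq 0$. Hence $\det\Gamma\neq 0$. This identification of $\Gamma$ with $\Upsilon$ data is the step I expect to be the main obstacle, since it requires careful bookkeeping of the $\phi$-twists.

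\textbf{Step 3: descent to the quotient.} The operator $\fra^*$ on $\bH(E)_K\simeq\bH_{\mup}(E)_K$ is induced from this lift, and $\fra^*$ preserves the subspace $\iota^*\phi^*\bX_{\mup-1}(E)_K$ up to the identification $\bH_{\mup}\simeq\bH_{\mup+1}$. An injective semilinear endomorphism of a finite-dimensional space that preserves a subspace $W$ restricts to a bijection of $W$, by dimension count. Therefore it induces a bijection on the quotient, and $\fra^*$ is bijective on $\bH(E)_K$.
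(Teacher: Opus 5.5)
Your outline matches the paper's own proof: the block companion determinant $\pm\det(\Lambda_{\mup}^{-1}\Gamma)$, Proposition \ref{Lambdamu} for $\Lambda_{\mup}$, \eqref{phi-up4} for $\Gamma$, then passage to the quotient. However, Step 2 is wrong as stated. The rows of $\Gamma$ are $\gamma_i=\gamma_{\tTheta_i}$, i.e.\ $\gamma_i\Psi_1=\Delta^*\tTheta_i$. For $o_i<\mup$ the character $\tTheta_i=\phi^{*(\mup-o_i)}\Theta_i$ is a $\phi^*$-pullback. Proposition \ref{diff}(2) gives $\Delta^*\phi^*=0$, which the paper notes right before \eqref{phiup}. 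Your own remark that $F$ has no linear term in $\bx$ says the same thing: $P_0(\phi^*\Theta)=0$, so $\gamma_{\phi^*\Theta}=0$, not ``$\phi$ of something''. In \eqref{phiup}, $\Delta^*$ only sees the coefficients of the $\Theta_j$ themselves, and $\tTheta_i$ has no such component. So your transition matrix $\Phi'$ has zero rows rather than a nonzero diagonal. Hence $\gamma_i=0$ whenever $o_i<\mup$, $\Gamma$ is singular, and $\det[\fra^*]=0$. Step 3 then fails too. Since $\lambda^{\phi}_{i\mup}\Lambda_{\mup}^{-1}=e_i$, your lift sends $\iota^*\tTheta_i$ to $-\gamma_i\Psi_1=0$. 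It therefore annihilates a nonzero vector of $W=\iota^*\phi^*\bX_{\mup-1}(E)_K$ and is not injective. The underlying point is that \eqref{phi-up4} concerns $\gamma_{\Theta_j}$, not $\gamma_{\tTheta_j}$. The paper's one-line citation makes the same identification, which is literally valid only when all $o_i=\mup$ (e.g.\ $h=1$).

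There are two ways to repair this.
\begin{enumerate}
\item Change the lifts. For $o_i<\mup$ replace $\tTheta_i$ by $\hat{\Theta}_i:=\tTheta_i+\Theta_i$, and take $\hat{\Theta}_i=\Theta_i$ if $o_i=\mup$.
\begin{itemize}
\item Since $\Theta_i\in u^*\bX_{\mup-1}(E)$, these lift the same basis of $\bX_{\mup}(E)_K/u^*\bX_{\mup-1}(E)_K$.
\item Since $\iota^*\Theta_i$ involves only $\Psi_1,\dots,\Psi_{o_i}$, $\Lambda_{\mup}$ is unchanged.
\item By additivity of $\Delta^*$ the new rows are $\gamma_{\hat{\Theta}_i}=\gamma_{\Theta_i}$, so $\Gamma$ becomes the matrix of \eqref{phi-up4} and is invertible.
\end{itemize}
Your Steps 1 and 3 then go through. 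For Step 3, preservation of $W$ by the lift follows from $\fra^*\iota^*\phi^*=\iota^*\phi^*\phi^*$ together with injectivity of $\bH_{\mup}(E)_K\to\bH_{\mup+1}(E)_K$.
\item Avoid the lift and prove surjectivity directly. In $\bH_{\mup+1}(E)_K$ the image of $\fra^*$ contains the classes of $\Psi_2,\dots,\Psi_{\mup+1}$. It also contains the classes of $\gamma_{\Theta_j}\Psi_1\equiv-\fra^*\iota^*\Theta_j$, hence the class of $\Psi_1$ by \eqref{phi-up4}. So $\fra^*\colon\bH_{\mup}(E)_K\to\bH_{\mup+1}(E)_K\simeq\bH_{\mup}(E)_K$ is surjective, and it is bijective by dimension count.
\end{enumerate}
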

\begin{proof}
By equation (\ref{phi-up4}) in the proof of Theorem \ref{isoupsilon}, we know that the block matrix $\Gamma = (\gamma_1 \cdots \gamma_h)^t$ is an invertible $h\times h$ matrix over $K$.
Consequently, taking the determinant of the block companion matrix yields:
\begin{align}\label{det-fra}
\det [\fra^*] = (-1)^{\mup} \det (\Lambda_{\mup}^{-1} \Gamma) \ne 0.
\end{align}
This non-vanishing determinant implies that the semilinear map $\fra^*: \bX(N^{\mup})_K \map \bX(N^{\mup})_K$ is bijective. Hereby the operator $\fra^*$ on the quotient space $\bH(E)_K$ is also bijective.
\end{proof}

\subsection{Hodge-Pink structure on $\bH_{\d}(E)$}\label{HP_delta}
\begin{definition} A \textit{Hodge--Pink structure} over $K$ on a $z$-isocrystal $(D,F_{D})$ is a $K[[z-\pi]]$-lattice $\mathfrak{q}_{D}$ contained within $\sigma^{*}D\otimes_{l((z))}K((z-\pi))$.
\end{definition}

A triple $(D',\mathfrak{q}_{D'},F_{D'})$ is said to be a \textit{sub-object} of $(D,\mathfrak{q}_{D},F_{D})$ if $D'$ is a subspace of $D$, $F_{D'}=F_{D}|_{D'}$, and $\mathfrak{q}_{D'}\subset \mathfrak{q}_{D}\cap (\sigma^{*}D'\otimes K((z-\pi)))$. Moreover, it is termed \textit{strict} if $\mathfrak{q}_{D'}= \mathfrak{q}_{D}\cap (\sigma^{*}D'\otimes K((z-\pi)))$.
Given any $z$-isocrystal $(D,F_{D})$, one can canonically associate a tautological Hodge--Pink structure $\mathfrak{p}_{D}=\sigma^{*}D\otimes K[[z-\pi]]$. For all integers $i$, the Hodge--Pink structure then induces a filtration on $D_K$ as follows:
\begin{align}
\Fil^{i}_{D_{K}}:=\mathrm{Im}\left(\mathfrak{p}_{D}\cap (z-\pi)^{i}\mathfrak{q}_{D}\right)~ \mathrm{in}~ D_{K}.
\end{align}
We will enhance the $z$-isocrystal $(\wHH,\mff^*)$ by associating a Hodge-Pink structure on it as follows:

%Note that since $\Lambda_{m_u}$ is invertible over $R$, by \eqref{det-fra}, we get  $\mathrm{ord}_{z}\left(\mathrm{det}(\mathfrak{f}^{*})\right)\geq \mathrm{ord}_{z}\left(\mathrm{det}(\Gamma)\right)\geq1.$ 
Let $e=\mathrm{ord}_{z}\left(\mathrm{det}(\mathfrak{f}^{*})\right)$. 
Recall that $\XX_{\mathrm{prim}}(E)=\langle \Theta_{1},\ldots,\Theta_{h}
\rangle$ where $\Theta_1,\dots, \Theta_h$ are primitive characters of $E$. 
Extending this basis to $\wHH$, we get $\wHH=\langle i^{*}\Theta_{1},\ldots,i^{*}\Theta_{h},\Psi_{i_1},\dots , \Psi_{i_m}\rangle$, for some $\Psi_{i_j}\in \bfH$. We define the Hodge-Pink structure on $\wHH$ as the following submodule
of $\phi^*\bH(E) \otimes K((z-\pi))$:
\begin{align*}
	\qH:=\phi^{*}\langle(z-\pi)^{-e}i^{*}\Theta_{1},\ldots,(z-\pi)^{-e}i^{*}\Theta_{h},\Psi_{i_1},\dots \Psi_{i_m}\rangle\otimes_{l((z))} K[[z-\pi]].
\end{align*}
An easy computation shows that the filtration is given by:
\begin{align*}
\Fil^{i}_{\wHH}=\left\{\begin{array}{ll}
		{\HH(E)}_{K}, & \mb{if } i\leq 0 \\
		{\XX_{\mathrm{prim}}(E)}_{K}, & \mb{if } 1\leq i\leq e\\
		0, & \mb{if} ~i\geq e+1.
	\end{array} 
	\right.
\end{align*}
By Theorem \ref{bijfra}, we have the following conclusion:
\begin{corollary} The object $(\wHH, \qH,\fra^*)$ is a $z$-isocrystal with Hodge--Pink structure.
\end{corollary}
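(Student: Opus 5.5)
The corollary asserts three things about $(\wHH,\qH,\fra^*)$. First, $(\wHH,\fra^*)$ is a $z$-isocrystal. Second, $\qH$ is a $K[[z-\pi]]$-lattice in $\sigma^*\wHH\otimes_{l((z))}K((z-\pi))$. Third, $\qH$ induces the stated filtration. I will verify these in that order.

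For the isocrystal structure, Theorem \ref{finite_dim_thm}(4) gives that $\bH(E)$ is finite free over $R\simeq l[[z]]$. Hence $\wHH=\bH(E)\otimes_R l((z))$ is a finite-dimensional $l((z))$-vector space. Under $R\simeq l[[z]]$, the Frobenius $\phi$ is identified with $\sigma$, so $\fra^*$ is a $\sigma$-semilinear endomorphism. Its linearization $F:\sigma^*\wHH\to\wHH$ is therefore $l((z))$-linear, with matrix equal to $[\fra^*]$ in the block basis. By Theorem \ref{bijfra} and \eqref{det-fra}, $\det[\fra^*]\neq 0$, which uses the invertibility of $\Gamma$ from \eqref{phi-up4} and of $\Lambda_{\mup}$ from Proposition \ref{Lambdamu}. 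So $F$ is an isomorphism. The only point to check is that bijectivity over $K$ transfers to $l((z))$. This holds because $K=l((z))$ by the Cohen structure theorem, so $\bH(E)_K$ and $\wHH$ coincide.

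For the lattice condition, I fix the $l((z))$-basis $\{i^*\Theta_1,\dots,i^*\Theta_h,\Psi_{i_1},\dots,\Psi_{i_m}\}$ of $\wHH$. This basis exists because $\bXp(E)_K\hookrightarrow\bH(E)_K$ is injective by \eqref{Xpshort}, so the images of the primitive $\Theta_j$ can be extended to a basis. Then $\phi^*$ of this basis is a basis of $\sigma^*\wHH$. The module $\qH$ is the $K[[z-\pi]]$-span of the vectors obtained by scaling the first $h$ of these by $(z-\pi)^{-e}$. Since $(z-\pi)^{-e}$ is a unit in $K((z-\pi))$, these vectors still form a $K((z-\pi))$-basis of $\sigma^*\wHH\otimes K((z-\pi))$. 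Their $K[[z-\pi]]$-span is therefore a free $K[[z-\pi]]$-submodule of full rank that spans the whole space after inverting $z-\pi$, which is exactly a lattice.

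For the filtration, I compare $\qH$ with the tautological lattice $\mathfrak{p}_D$. In the basis above, $\qH$ is diagonal relative to $\mathfrak{p}_D$, with exponents $-e$ on the first $h$ basis vectors and $0$ on the rest. It follows that $\mathfrak{p}_D\cap(z-\pi)^i\qH$ is the span of $(z-\pi)^{\max(0,i-e)}$ times the first $h$ vectors together with $(z-\pi)^{\max(0,i)}$ times the remaining ones. Reducing modulo $z-\pi$ then gives $\bH(E)_K$ for $i\le 0$, $\bXp(E)_K$ for $1\le i\le e$, and $0$ for $i\ge e+1$.

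I expect the main obstacle to be the bookkeeping in these last two steps. One has to check that $\phi^*$ carries the chosen basis to a basis of $\sigma^*\wHH$, and that the map $l((z))\to K[[z-\pi]]$ sending $z\mapsto\pi+(z-\pi)$ is a well-defined ring homomorphism. The latter holds because $\pi$ is topologically nilpotent in $K$, but the paper's setup leaves it implicit, so I will verify it directly.
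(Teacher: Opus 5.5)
Your proposal is correct and follows the paper's route: the paper deduces the corollary directly from Theorem \ref{bijfra} (bijectivity of $\fra^*$ on $\bH(E)_K=\wHH$) together with the definition of $\qH$, and your lattice and filtration checks make explicit what the paper leaves as an easy computation. One small correction: the block matrix $[\fra^*]$ represents the operator on $\Hom(N^{\mup},\hG)_K$ (of dimension $h\mup$), not on $\wHH$, so cite Theorem \ref{bijfra} directly --- surjectivity of the semilinear map $\fra^*$ on $\wHH$ makes its linearization $\sigma^*\wHH\to\wHH$ surjective, and hence an isomorphism, since both sides have the same dimension.
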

\section{Weak admissibility of $\bH_{\d}(E)$ for Drinfeld modules and Galois Representations}\label{weak-admissible}
Let $(D,\mathfrak{q}_{D},F_{D})$ be a $z$-isocrystal with a Hodge-Pink structure. We can associate two invariants to $(D,\mathfrak{q}_{D},F_{D})$: the Newton number $t_{N}(D)$ and the Hodge number $t_{H}(D)$. 
The Newton number for $(D,F_D)$ is defined as
\begin{align}
	t_{N}(D):=\ord_z(\mathrm{det}(F_{D}))
\end{align}
and the Hodge number as 
\begin{align}
t_{H}(D):=\sum_{i\geq 0}i.\mathrm{dim}\left(\dfrac{\Fil^{i}_{D_{K}}}
{\Fil^{i+1}_{D_{K}}}\right).
\end{align}
\begin{definition}
	A $z$-isocrystal $(D,F_D)$ with Hodge Pink structure is said to be weakly admissible if 
	\begin{itemize}
		\item[(i)] $t_{H}(D)=t_{N}(D)$ and
		\item[(ii)] $t_{H}(D')\leq t_{N}(D')$ for any strict sub-object $D'$ of $D$.
	\end{itemize}
\end{definition}
Here we now recall some of the basic facts from \cite{BS b} when $E$ is a 
Drinfeld module of rank $r$. For a Drinfeld module $E$ of rank $r$ let $m$
be the splitting number. Then $\bXp(E) \simeq R\langle \Theta \rangle$ where 
$\Theta$ is a primitive character of order $m$ satisfying
\begin{align}
\label{Thetapull}
i^*\Theta = \Psi_m - \lam_{m-1}\Psi_{m-1} - \cdots - \lam_1 \Psi_1.
\end{align}
As in Section 9B of \cite{BS b}, we have
$\HH(E) \simeq R\langle \Psi_1,\dots \Psi_m \rangle$. 
The infinite rank free $R$-module 
$$\HomA(N^\infty,\hG) := \varinjlim~ (\HomA(N^n,\hG)) \simeq 
R\langle \Psi_1, \Psi_2, \dots \rangle$$ has semilinear operator $\fra^*$ 
induced from pulling back via the lateral Frobenius $\fra$ on $N^n$s.
This is given by 
$$
\fra^*(\Psi_i) = \Psi_{i+1}
$$
for all $i \geq 1$. Then the induced semilinear operator $\fra^*$ acts on 
the quotient $R$-module $\HH(E)$ as 
\begin{align*}
\fra^*(\Psi_i) &= \Psi_{i+1}, \mb{ for all } i = 0,\dots , m-1\\
\fra^*(\Psi_m) &= \phi(\lam_{m-1})\Psi_m + \cdots + \phi(\lam_1)\Psi_1 + 
\gamma \Psi_1.
\end{align*}
However by $(\ref{Thetapull})$, we can consider $\HH(E)$ with the new basis
$R\langle i^*\Theta, \Psi_1,\dots ,\Psi_{m-1}\rangle$. 
Then by Proposition \ref{diff}  we get
\begin{align}
\label{franew}
\fra^*(i^*\Theta) &= \gamma \Psi_{1} \\
\fra^*(\Psi_1) &= \Psi_2 \nonumber\\
\vdots \hspace{.75cm} & \hspace{.75cm} \vdots \nonumber \\
\fra^*(\Psi_{m-2}) &= \Psi_{m-1} \nonumber  \\
\fra^*(\Psi_{m-1}) &= i^*\Theta + \lam_{m-1}\Psi_{m-1}+ \cdots + \lam_1 \Psi_1
\nonumber
\end{align}
Let $\overline{L}$ denote a fixed algebraic closure of $l((z))$. For $r$ positive, set $L_{r}= l((z^{\frac{1}{r}}))\subset \overline{L}$ and $L_{\infty}:=l((z^{1/\infty}))=\displaystyle{\bigcup_{r\geq 1}}L_{r}$. Let $\mathcal{O}_{L_{\infty}}$ denotes the valuation ring of $L_{\infty}$. Then we can extend $\phi$ uniquely to $L_{\infty}$ as $\phi(z^{\frac{1}{r}}) = z^{\frac{1}{r}}$, for each $r\geq 1$.
\begin{lemma}
\label{eqval}
For any $x \in \mathcal{O}_{L_{\infty}}$, we have $\ord_z(x) = \ord_z(\phi(x))$.
\end{lemma}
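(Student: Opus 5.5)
Write $x\in\mathcal{O}_{L_\infty}$ explicitly and check that $\phi$ fixes the powers of $z$ while acting on coefficients by an injective map. Since $L_\infty=\bigcup_{r\ge1}L_r$, the element $x$ lies in $\mathcal{O}_{L_r}=l[[z^{1/r}]]$ for some $r$. Hence we may write
\[
x=\sum_{n\ge n_0} a_n z^{n/r}, \qquad a_n\in l,
\]
with $a_{n_0}\neq 0$ when $x\neq 0$. In that case $\ord_z(x)=n_0/r$. The case $x=0$ is trivial, since $\phi(0)=0$ and both sides equal $\infty$.

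Next we compute $\phi(x)$. The extension of $\phi$ to $L_r$ is the continuous ring endomorphism with $\phi(a)=a^{q}$ for $a\in l$ (as recalled in Section \ref{LocalShtuka}, via $R\simeq l[[z]]$) and $\phi(z^{1/r})=z^{1/r}$. Uniqueness of the extension forces this description. By continuity for the $z^{1/r}$-adic topology, we obtain
\[
\phi(x)=\sum_{n\ge n_0}\phi(a_n)\, z^{n/r}=\sum_{n\ge n_0} a_n^{q}\, z^{n/r}.
\]
We should confirm that this coefficientwise formula really defines a ring endomorphism extending $\phi$. This holds because $a\mapsto a^q$ is a ring endomorphism of $l$ in characteristic $p$, and the formula is compatible with multiplication of power series.

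Finally, the leading coefficient $a_{n_0}^{q}$ is nonzero, because $a\mapsto a^q$ is injective on the field $l$. The map is a field homomorphism, so its kernel is zero. Therefore $\ord_z(\phi(x))=n_0/r=\ord_z(x)$.

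The only point needing care is justifying that the extension of $\phi$ acts coefficientwise, that is, the continuity and uniqueness of the extension to $L_r$. This follows because $L_r=l((z))(z^{1/r})$ is generated over $l((z))$ by $z^{1/r}$ with $\phi$ fixing it. The element $x$ is a finite $l((z))$-combination $\sum_{j=0}^{r-1} b_j z^{j/r}$ with $b_j\in l((z))$, so $\phi(x)=\sum_j \phi(b_j) z^{j/r}$. Applying the coefficientwise description of $\phi$ on $l((z))$ to each $b_j$ reduces the claim to the case $r=1$, which is immediate from the description of $\phi$ on $l[[z]]$.
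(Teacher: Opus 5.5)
Your proof is correct and follows essentially the paper's route. The paper writes $x=z^{e}u$ with $u$ a unit, uses $\phi(z^{e})=z^{e}$, and deduces that $\phi(u)$ is a unit from $\phi(u)=u^{\hq}+z\d u$; you read off the same two facts (fractional powers of $z$ are fixed, and the leading coefficient goes to a nonzero element of $l$) from the coefficientwise formula for $\phi$ on $l((z^{1/r}))$. If anything, your explicit expansion justifies the unit step more cleanly: on $\mathcal{O}_{L_\infty}$ the element $\d u$ need not be integral (e.g.\ $u=1+z^{1/r}$ with $r>1$), so the paper's step really rests on $\phi(u)\equiv u^{\hq}$ modulo the maximal ideal rather than modulo $z$.
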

\begin{proof}
For $x\in \mathcal{O}_{L_{\infty}}$ we have  $\phi(x) = x^\hq+ z \d x$. Let $e:= \ord_z(x)$. If $e=0$,
then $\ord_z(x^\hq) = 0$ which implies $\ord_z(\phi(x)) = 0$. 
Now when $e > 0$, then $x = z^eu$ where $\ord_z(u) =0$. Then $\phi(x) = z^e
\phi(u)$. From $e=0$ case, we have $\ord_z(\phi(u))=0$ and hence we 
have $\ord_z(\phi(x)) = e$ and we are done.
\end{proof}

\begin{lemma}
\label{shortlem}
Let $a \beta = b + a c$ where $\ord_z(a)< 0$ and $\ord_z(b), \ord_z(c) \geq 0$.
Then $\ord_z(\beta) \geq 0$.
\end{lemma}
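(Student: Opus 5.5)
We argue by contradiction, using only the ultrametric property of $\ord_z$ on $L_\infty$ (extended by $\ord_z(0)=+\infty$). Implicitly $a\neq 0$, since $\ord_z(a)<0$ is finite. Suppose $\ord_z(\beta)<0$. We rewrite the relation as
\begin{align*}
a(\beta - c) = b,
\end{align*}
so that
\begin{align*}
\ord_z(a) + \ord_z(\beta - c) = \ord_z(b) \geq 0 .
\end{align*}
Since $\ord_z(c)\geq 0 > \ord_z(\beta)$, the strict triangle inequality gives $\ord_z(\beta - c) = \ord_z(\beta) < 0$. Together with $\ord_z(a)<0$, this makes the left-hand side strictly negative. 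That contradicts $\ord_z(b)\geq 0$, and hence $\ord_z(\beta)\geq 0$.

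Alternatively, one can avoid contradiction altogether. Here $\beta = c + b/a$ with $\ord_z(b/a) = \ord_z(b)-\ord_z(a) > 0$, so
\begin{align*}
\ord_z(\beta)\geq \min(\ord_z(c),\ord_z(b/a))\geq 0 .
\end{align*}
I would present this direct version. The only point needing care is the case $b=0$ or $c=0$, where $\ord_z=+\infty$; the inequalities still hold with the usual convention. There is no real obstacle: the lemma is a one-line valuation estimate, which will presumably be applied to the entries of $\fra^*$ in the basis \eqref{franew} when bounding Newton numbers of sub-objects.
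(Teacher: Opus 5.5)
Your proof is correct and uses the same key idea as the paper, namely the ultrametric inequality for $\ord_z$. The paper applies it to $b+ac$ and splits into cases according to whether $\ord_z(b)$ or $\ord_z(ac)=\ord_z(a)+\ord_z(c)$ is the smaller term, whereas your rewriting $\beta = c + b/a$ handles both cases in one line.
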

\begin{proof}
Let $\ord_z(a) = -h$ where $h > 0$. Then 
$$\ord_z(a \beta) = \ord_z(a) + \ord_z(\beta) = -h+ \ord_z(\beta).$$ 
Also note that 
$$\ord_z(b+ac) \geq \min(\ord_z(b), \ord_z(ac)).$$
Hence combining the above we obtain 
$-h + \ord_z(\beta) \geq \left\{\begin{array}{l}\ord_z(b) \mb{, or}\\ 
\ord_z(ac). \end{array}\right.$
Then the first case of the inequality implies 
\begin{align*}
-h + \ord_z(\beta) &\geq \ord_z(b) \geq  0 \\
\ord_z(\beta) &\geq h \geq 0.
\end{align*}
On the other hand, the second case implies
\begin{align*}
-h+ \ord_z(\beta) &\geq -h + \ord_z(c)\\
\ord_z(\beta) &\geq \ord_z(c) \geq 0.
\end{align*}
and this completes our proof.
\end{proof}

\begin{lemma} 
\label{eigenint}
Let $E$ be a Drinfeld module of rank $r$. Consider $\wHH_{L_{\infty}}$ as an
$L_{\infty}$-vector space. Let $0\neq v\in\wHH_{L_{\infty}}$ be such 
that $\mathfrak{f}^{*}(v)=\mu v$. Then $\ord_z(\mu) \geq 0$.
\end{lemma}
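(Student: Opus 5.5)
We work in the basis $\{i^*\Theta,\Psi_1,\dots,\Psi_{m-1}\}$ of $\wHH$, in which $\fra^*$ acts by the formulas \eqref{franew}. The coefficients $\gamma,\lam_1,\dots,\lam_{m-1}$ all lie in $R$, hence have nonnegative $\ord_z$. Write
$$v = a\, i^*\Theta + b_1\Psi_1+\cdots+b_{m-1}\Psi_{m-1}$$
with $a,b_j\in L_\infty$. After rescaling $v$ by a suitable power of $z^{1/N}$, which is fixed by $\phi$ and so commutes with $\fra^*$, we may assume that the minimum of the $\ord_z$ of the coordinates is $0$. Then every coordinate lies in $\mathcal{O}_{L_\infty}$, and at least one coordinate is a unit.

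Applying $\fra^*$ semilinearly and using \eqref{franew}, the equation $\fra^*(v)=\mu v$ becomes the following system:
\begin{align*}
\mu a &= \phi(b_{m-1}),\\
\mu b_1 &= \phi(a)\gamma + \phi(b_{m-1})\lam_1,\\
\mu b_j &= \phi(b_{j-1}) + \phi(b_{m-1})\lam_j \quad (2\le j\le m-1).
\end{align*}
By Lemma \ref{eqval}, $\phi$ preserves $\ord_z$ on $\mathcal{O}_{L_\infty}$. Hence every right-hand side lies in $\mathcal{O}_{L_\infty}$: it is a sum of products of integral elements, since $\gamma$ and the $\lam_j$ lie in $R$.

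We argue by contradiction and suppose $\ord_z(\mu)<0$. The first equation gives $\ord_z(a)=\ord_z(b_{m-1})-\ord_z(\mu)>\ord_z(b_{m-1})\ge 0$. Each of the remaining equations has the form $\mu\beta = (\text{integral})$, which is exactly the shape of Lemma \ref{shortlem} with $c=0$. Applying it naively only gives $\ord_z(\beta)\geq0$, which we already know. To get a contradiction we must sharpen this to strict positivity and propagate it. The cleanest route is to take the coordinate $x$ of minimal valuation $0$ and read off the equation in which $\mu x$ appears on the left. That equation reads $\ord_z(\mu x)=\ord_z(\mu)<0$, while its right-hand side is integral, so its valuation is $\geq 0$. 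This is a contradiction.

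So the essential point is simply that every coordinate appears with $\mu$ on the left-hand side of some equation in the system. This holds because the coefficient of each basis vector in $\fra^*(v)$ is matched with $\mu$ times that coordinate. The main obstacle I expect is justifying the normalization, namely that the coordinates can be chosen in $\mathcal{O}_{L_\infty}$ with minimum valuation exactly $0$. This is where the fractional powers $z^{1/r}$ are needed: the minimal valuation is a rational number, so we rescale by $z^{-\min/1}$ in $L_\infty$, using $\phi(z^{1/r})=z^{1/r}$, so that the rescaled vector is still an eigenvector with the same $\mu$.
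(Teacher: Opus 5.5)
Your proof is correct, but it takes a different route from the paper's. The paper first shows that the $i^*\Theta$-coordinate $\beta_0$ is nonzero when $\mu\neq 0$. It then rescales by $\beta_0^{-1}$, which changes $\mu$ to $\phi(\beta_0^{-1})\beta_0\mu$, of the same valuation, so that $\mu=\phi(\beta_{m-1})$. Finally, assuming $\ord_z(\beta_{m-1})<0$, it runs Lemma \ref{shortlem} down the chain of equations to force every $\beta_i$ to be integral, a contradiction.

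You instead scale by $z^{-\nu}$, where $\nu\in\mathbb{Q}$ is the minimal valuation of the coordinates. Since $\phi$ fixes $z^{-\nu}$, this leaves $\mu$ unchanged. The matrix of $\fra^*$ in the basis of \eqref{franew} is integral, and $\phi(\mathcal{O}_{L_\infty})\subset\mathcal{O}_{L_\infty}$ by Lemma \ref{eqval}, so $\mu v=\fra^*(v)$ is integral. A unit coordinate then gives $\ord_z(\mu)\geq 0$ at once.

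Your route has several advantages:
\begin{itemize}
\item It needs neither Lemma \ref{shortlem} nor a contradiction; your digression on $\ord_z(a)$ and on Lemma \ref{shortlem} with $c=0$ can simply be cut.
\item It applies Lemma \ref{eqval} only to integral elements, as that lemma is stated. The paper also uses valuation-preservation for the possibly non-integral $\beta_0^{-1}$ and $\beta_{m-1}$.
\item It uses nothing about the companion shape of \eqref{franew} beyond integrality of $\gamma$ and the $\lam_i$.
\end{itemize}
In effect you prove that eigenvalues of a $\phi$-semilinear operator preserving an $R$-lattice have nonnegative valuation. This applies verbatim to $\fra^*$ on $\bH(E)$ for any abelian Anderson module, since $\bH(E)$ is a finite free $R$-module stable under $\fra^*$. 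The paper's approach only additionally records that an eigenvector with $\mu\neq 0$ has nonzero $i^*\Theta$-component, which the lemma does not need.

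The normalization you flag as the main obstacle is routine. Just write $z^{-\nu}$ in place of $z^{-\min/1}$, noting that $\nu$ is finite because $v\neq 0$.
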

\begin{proof}
Let $m$ be the splitting number of $E$ and $v \in \wHH_{L_{\infty}}$ be a non-zero 
eigenvector with eigenvalue $\mu \in L_{\infty}$. In terms of the basis given above, 
write $v=\beta_{0}i^{*}\Theta+\beta_{1}\Psi_{1}+\ldots+\beta_{m-1}\Psi_{m-1}$, where $\beta_{i}\in {L_{\infty}}$. Then by (\ref{franew}) we have 
\begin{align}
\label{fstar} 
\fra^*v &= \phi(\beta_{m-1}) i^*\Theta + 
(\phi(\beta_0)\gamma+ \phi(\beta_{m-1}) \lam_1)\Psi + (\phi(\beta_1)+
\phi(\beta_{m-1})\lam_2)\Psi_2 \\
& \hspace{3cm} + \cdots + (\phi(\beta_{m-2}) + \phi(\beta_{m-1})\lam_{m-1})
\Psi_{m-1} \nonumber
\end{align}
Comparing the coefficients of the basis 
elements in the equation $\mathfrak{f}^{*}(v)=\mu v$, we obtain
\begin{align*}
\mu \beta_0 &=	\phi(\beta_{m-1})\\
\mu\beta_{1} &=	\phi(\beta_{0})\gamma+\lambda_{1}\phi(\beta_{m-1})\\
\mu\beta_{i} &= \phi(\beta_{i-1})+\lambda_{i}\phi(\beta_{m-1}), 
~\mathrm{for}~ 2\leq i\leq m-1.
	\end{align*}
Recall that $\gamma$ and $\lam_i$s are integral by Propositions $9.6$
 and $9.8$ in \cite{BS b}. Hence $\ord_z(\gamma), \ord_z(\lam_i) \geq 0$ for
all $i$.
If $\mu$ is zero then we are done. Hence we assume $\mu$ to be nonzero. Since $\phi$ is an automorphism, from above relations it is easy to see that 
$\beta_{0}$ equal to zero implies all $\beta_{i}$s are zero. Hence $v$ is zero. Therefore 
$\beta_{0}\neq 0$. Note that $\mathfrak{f}^{*}(\beta_{0}^{-1}v)=
\phi(\beta_{0}^{-1})\mu v=(\phi(\beta_{0}^{-1})\beta_{0}\mu)(\beta_{0}^{-1}v)$,
that is $\beta_0^{-1} v$ is also an eigen vector with eigen value 
$\phi(\beta_0^{-1})\beta_0 \mu$ whose $z$-adic valuation is the same as that of 
$\mu$ by Lemma \ref{eqval}. 
Hence without loss of generality, we can assume that $\beta_{0}$ is $1$. 
Hence substituting $\mu = \phi(\beta_{m-1})$ in the rest of the above 
equations we obtain
\begin{align*}
\phi(\beta_{m-1})\beta_1 &=\gamma+\lam_{1}\phi(\beta_{m-1})\\
\phi(\beta_{m-1})\beta_i&=\phi(\beta_{i-1})+\lambda_{i}\phi(\beta_{m-1}),
 ~\mathrm{for}~ 2\leq i\leq m-1.
\end{align*}
If $\ord_{z}(\beta_{m-1})\geq 0$, then from the first equation and
Lemma \ref{eqval} we have $\ord_{z}(\mu)\geq 0$ and we are done.  
Now assume $\ord_{z}(\beta_{m-1})<0$. Then applying Lemma \ref{shortlem}
inductively to the above set of equations, starting with the first one, 
we obtain $\ord_z(\phi(\beta_i)) \geq 0$ for all $i=1,\dots, m-1$. Then 
Lemma \ref{eqval} implies $\ord_z(\beta_i) \geq 0$ for all $i=1,\dots, m-1$.  
Hence in particular $\ord_z(\beta_{m-1}) \geq 0$ which contradicts our 
assumption that $\ord_z(\beta_{m-1}) < 0$. Therefore, by the first equation,
 we must have $\ord_z(\mu) \geq 0$ and this completes the proof.
\end{proof}

\begin{lemma}
\label{det_pos} 
Let $D\subset \wHH$ be a subspace stable under the semilinear 
operator $\mathfrak{f}^{*}$ and denote $F_D = \fra^*|_D$. 
Then $\ord_z(\mathrm{det(F_{D})})\geq 0$.
\end{lemma}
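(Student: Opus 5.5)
Since $D$ is $\fra^*$-stable and $\fra^*$ is bijective on $\wHH$ (Theorem \ref{bijfra}), $F_D$ is an injective semilinear endomorphism of the finite-dimensional space $D$. It is therefore bijective, and $(D,F_D)$ is itself a $z$-isocrystal. The goal is to bound $\ord_z\det(F_D)$ from below through the eigenvalues of $F_D$, using Lemma \ref{eigenint}. Note that $\ord_z\det(F_D)$ does not depend on the basis of $D$: changing the basis by $P$ replaces the matrix $M$ by $P^{-1}M\phi(P)$, and $\ord_z\det\phi(P)=\ord_z\det P$ by Lemma \ref{eqval}.

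\textbf{Step 1: pass to an extension where $F_D$ diagonalizes.} Base change to $L_\infty$ (or its completed perfect closure $l'$, with $l'$ algebraically closed). Over an algebraically closed residue field, the Dieudonné--Manin classification for $z$-isocrystals (as in Hartl's and Genestier--Lafforgue's setting) gives a basis of $D\otimes \overline{l}((z^{1/\infty}))$ in which $F_D$ is diagonal with entries $z^{s_j}$. The slopes $s_j$ are rational with bounded denominators, so this is available over $L_{N}$ for a suitable $N$. Then
\[
\ord_z\det F_D=\sum_j s_j ,
\]
and this sum is unchanged under the base change.

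\textbf{Step 2: show each slope is nonnegative.} Each basis vector $v_j$ is an eigenvector of $\fra^*$ in $\wHH_{L_\infty}$ with eigenvalue $\mu_j=z^{s_j}$. Lemma \ref{eigenint} then gives $s_j=\ord_z\mu_j\ge 0$, and hence $\ord_z\det F_D\ge 0$.

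\textbf{Main obstacle.} The delicate point is that Lemma \ref{eigenint} is stated over $L_\infty=l((z^{1/\infty}))$ with the residue field $l$ unchanged, whereas Dieudonné--Manin needs an algebraically closed residue field. I would first check that the proof of Lemma \ref{eigenint} uses only Lemma \ref{eqval}, Lemma \ref{shortlem} and the integrality of $\gamma,\lambda_i$. All of these survive replacing $l$ by $\overline{l}$ with $\phi$ extended as the $\hq$-power Frobenius. So I would restate that lemma over $\overline{l}((z^{1/\infty}))$.

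If this route stalls, I would use an alternative argument. By Lemma \ref{eqval} and the change-of-basis remark above, $\ord_z\det$ is additive in short exact sequences of $\fra^*$-stable subspaces. Choosing a $\fra^*$-stable flag in $D$ whose graded pieces are isoclinic then reduces the claim to isoclinic pieces. On an isoclinic piece of slope $s$, some power of $F$ times $z^{-ks}$ fixes a lattice, so an eigenvector of slope $s$ exists. Lemma \ref{eigenint} then forces $s\ge0$.
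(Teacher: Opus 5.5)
Your proposal is correct and follows essentially the paper's argument: decompose $D$ by the Dieudonn\'e--Manin classification, produce an eigenvector of eigenvalue $z^{s/n}$ for each slope piece after adjoining $z^{1/\infty}$, and apply Lemma \ref{eigenint}. The paper does this by reducing to a simple summand $L_\infty\{\fra^*\}/(\fra^{*n}-z^s)$ and writing the eigenvector explicitly, whereas you diagonalize directly. Your passage to an algebraically closed residue field, together with your check that Lemma \ref{eigenint} survives that base change, fills in a point the paper glosses over when it invokes the classification directly over $l((z^{1/\infty}))$.
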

\begin{proof}

Note that without loss of generality, we may assume that $D$ is a vector space over $L_{\infty}$. Now we know from the Dieudonn\'{e} 
theory (cf. Proposition B.1.14, Appendix B in \cite{Lau}) that $D$ can be 
written as the direct sum of simple modules of the form 
$\frac{L_{\infty}\{\mathfrak{f}^{*}\}}{L_{\infty}\left({\mathfrak{f}^{*}}^{n}-z^{s}\right)}$, for some $n\geq 1$ and $k$ in $\ZZ$. The matrix with respect to the basis $\{1,\mathfrak{f}^{*},\ldots,{\mathfrak{f}^{*}}^{r-1}\}$ is given by 
$$\begin{bmatrix}
    0 & 0  & \dots & 0 & z^{s} \\
    1 & 0  & \dots & 0 & 0 \\
    0 & 1  & \dots & 0 & 0\\
   \vdots & \ddots & \ddots  & \vdots & \vdots\\
    0 & 0  & \dots & 1 & 0 & 
\end{bmatrix}$$
To prove our result it is enough to show that $s$ is non-negative for each
of the the simple modules appearing in the direct sum decomposition of $D$. 
Now note that a subobject $D \inj \wHH$, has a section in the category of 
$L_{\infty}\{\fra^*\}$-modules. Hence the simple modules appearing in the direct sum
decomposition of $D$ also appear in the decomposition of $\wHH$.
Hence without loss of generality, we can assume $D$ itself is a simple $L_{\infty}\{\fra^*
\}$-module of the above form. Consider the element 
$$v=\sum_{i=0}^{n-1} z^{\frac{(n-i-1)s}{n}}{\mathfrak{f}^{*}}^{i}\in D.$$
Then $v$ is an eigenvector of $\fra^*$ with eigenvalue $z^{\frac{s}{n}}$ that 
is $\mathfrak{f}^{*}(v)=z^{s/n}v $. Then the above Lemma \ref{eigenint} 
implies that $s\geq 0$.
\end{proof}

\begin{theorem}
\label{wadd}
Let $E$ be a Drinfeld module of rank $r$ over $R$.
\begin{itemize}
\item[(1)]  Then the $z$-isocrystal with Hodge-Pink structure  $(\wHH,\qH,\mathfrak{f}^{*})$ is weakly admissible.	

\item[(2)]  In particular, $(\wHH, \qH, \fra^*)$ determines a crystalline Galois representation
$$
\rho_{\wHH}: \mathrm{Gal}(K^{\mathrm{sep}}/K) \longrightarrow 
\mathrm{GL}_m(\FF_{\hq}((z)))
$$
where $m = \dim_{l((z))} \wHH$ is the splitting number of $E$.
\end{itemize}
\end{theorem}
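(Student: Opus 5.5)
We plan to verify the two conditions in the definition of weak admissibility directly, using the explicit description of $\fra^*$ in \eqref{franew} and the filtration computed in Subsection \ref{HP_delta}. For a Drinfeld module, $h=1$, $\bXp(E)\simeq R\langle\Theta\rangle$ and $\HH(E)\simeq R\langle i^*\Theta,\Psi_1,\dots,\Psi_{m-1}\rangle$. First, the Newton number of $\wHH$ itself is read off from \eqref{franew}. The matrix of $\fra^*$ in this basis is a companion-type matrix whose determinant is $\pm\gamma$, so $t_N(\wHH)=\ord_z(\gamma)=e$. On the Hodge side, the filtration is $\Fil^i=\HH(E)_K$ for $i\le 0$, $\Fil^i=\bXp(E)_K$ (one-dimensional) for $1\le i\le e$, and $0$ for $i\ge e+1$. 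Hence the only nonzero graded pieces are $\Fil^0/\Fil^1$ (weight $0$) and $\Fil^e/\Fil^{e+1}$ (weight $e$, dimension $1$), giving $t_H(\wHH)=e=t_N(\wHH)$. This proves condition (i).

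For condition (ii), let $D'\subset\wHH$ be a strict sub-object, so $D'$ is $\fra^*$-stable. By Lemma \ref{det_pos}, we have $t_N(D')=\ord_z\det(\fra^*|_{D'})\ge 0$. We then split into two cases, according to whether $\sigma^*D'\otimes K((z-\pi))$ meets the "Hodge line" spanned by $i^*\Theta$.

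\emph{Case 1: the induced filtration on $D'_K$ has no weight-$e$ piece.} Strictness gives $\mathfrak q_{D'}=\qH\cap(\sigma^*D'\otimes K((z-\pi)))$, so the induced filtration is $\Fil^i_{D'}=\Fil^i\cap D'_K$. If $D'_K\cap\bXp(E)_K=0$, then $t_H(D')=0\le t_N(D')$ and we are done.

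\emph{Case 2: $D'_K\supseteq\bXp(E)_K$.} Then $t_H(D')=e$, and we must show $t_N(D')\ge e$. We intend to use the quotient $\wHH/D'$. It is an $\fra^*$-stable quotient, and since $\fra^*$ is bijective on $\wHH$ (Theorem \ref{bijfra}), it is again an isocrystal. Determinants are multiplicative in the short exact sequence, so $t_N(D')=e-t_N(\wHH/D')$. It therefore suffices to show $t_N(\wHH/D')\le 0$, i.e. combined with the Lemma \ref{det_pos}-type inequality, that the quotient has slope exactly $0$.

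The plan for this step is to argue via slopes, using the Dieudonné decomposition over $L_\infty$ as in Lemma \ref{det_pos}. All slopes of $\wHH$ are $\ge0$ by Lemma \ref{eigenint}, and the slopes sum to $e$. The quotient $\wHH/D'$ splits off as a direct summand, so its simple constituents are among those of $\wHH$. What remains is to show that every positive-slope constituent lies in any stable subspace whose extension to $K$ contains $i^*\Theta$. This is the step I expect to be the main obstacle.

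My first attempt would be the following argument. Consider the $\fra^*$-stable subspace generated by $i^*\Theta$. By \eqref{franew}, $\fra^*(i^*\Theta)=\gamma\Psi_1$, and iterating reaches $\Psi_2,\dots,\Psi_{m-1}$. Thus any $\fra^*$-stable subspace defined over $l((z))$ containing a vector whose $i^*\Theta$-coordinate is nonzero generates all of $\wHH$. The delicate point is that $D'_K\ni i^*\Theta$ only after base change to $K$. I would handle this by comparing $D'$ with $D'\otimes K$ through the $K[[z-\pi]]$-lattice condition. Concretely, I would use that the Hodge line being contained in $\sigma^*D'\otimes K((z-\pi))$ forces some vector of $D'$ to have nonzero $i^*\Theta$-component. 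Once that holds, $D'=\wHH$, so $t_N(D')=e=t_H(D')$.

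Finally, part (2) follows from part (1). By the equal-characteristic Fontaine theory of Genestier--Lafforgue and Hartl \cite{GL2011, Hartl 2011}, weakly admissible $z$-isocrystals with Hodge--Pink structure correspond to crystalline $\FF_{\hq}((z))$-representations of $\mathrm{Gal}(K^{\mathrm{sep}}/K)$ of dimension $\dim_{l((z))}\wHH$. That dimension is $m$ by Section 9B of \cite{BS b}.
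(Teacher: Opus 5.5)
Your treatment of condition (i), of Case 1, and of part (2) agrees with the paper. Case 2, however, has a genuine gap. You claim that any $\mathfrak{f}^*$-stable $l((z))$-subspace containing a vector with nonzero $i^*\Theta$-coordinate must be all of $D:=\mathbf{H}_\delta(E)$, and this is false.

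By \eqref{franew}, take $0\neq v=\sum_{j\le k}\beta_j\Psi_j$ with $\beta_k\neq 0$. Then $(\mathfrak{f}^*)^{m-k}v$ has $i^*\Theta$-coordinate $\phi^{m-k}(\beta_k)\neq 0$. So every nonzero stable subspace contains such a vector, and your claim would force $D$ to have no proper nonzero $\mathfrak{f}^*$-stable subspaces at all. That fails whenever, for example, an eigenline of $\mathfrak{f}^*$ is defined over $l((z))$; the proof of Lemma \ref{eigenint} shows every eigenvector has $\beta_0\neq 0$. Such proper sub-objects are exactly what condition (ii) is about. Weakening the Hodge-line condition to \emph{some vector of $D'$ has nonzero $i^*\Theta$-component} discards precisely the information you need. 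The slope argument you fall back on is also left unresolved.

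What you need, and what is true, is that $i^*\Theta$ itself lies in $D'$. The passage from $D'_K$ to $D'$ that you call delicate is in fact immediate. The Hodge line $K\cdot i^*\Theta$ is spanned by the image of the $l((z))$-rational vector $i^*\Theta$. Base change of vector spaces along the field homomorphisms $\sigma$ and $l((z))\to K$ is injective. Hence the image of $i^*\Theta$ in $D/D'$ vanishes as soon as its image in $(\sigma^*D/\sigma^*D')\otimes K$ does. So if $D'_K$ contains the Hodge line, then $i^*\Theta\in D'$.

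Then $\gamma\Psi_1=\mathfrak{f}^*(i^*\Theta)\in D'$ with $\gamma\neq 0$. Applying $\mathfrak{f}^*$ repeatedly gives $\Psi_2,\dots,\Psi_{m-1}\in D'$, so $D'=D$ and $t_N(D')=e=t_H(D')$. This is essentially the paper's argument, stated there tersely via $\mathfrak{f}^*(i^*\Theta)=\gamma\Psi_1$ with $\gamma\neq0$. A proper stable $D'$ cannot contain $i^*\Theta$, so $\mathfrak{q}_{D'}=\mathfrak{p}_{D'}$ and $t_H(D')=0$, and Lemma \ref{det_pos} finishes. No lattice comparison and no slope analysis of $D/D'$ is needed.
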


\begin{proof}
$(i) $
Let $E$ be a Drinfeld module of rank $r$ with splitting number $m$. 
Then by Section 9B of \cite{BS b} and expressing $\Psi_m$ as in 
(\ref{Thetapull}) we 
obtain $\bfH=\langle i^{*}\Theta,\Psi_{1},\dots,\Psi_{m-1}\rangle$. Hence the tautological Hodge-Pink structure is given by $\mathfrak{p}_{\wHH}:=\phi^{*}\langle i^{*}\Theta,\Psi_{1},\dots,\Psi_{m-1}\rangle$. Also by our definition $$\qH:=\phi^{*}\langle(z-\pi)^{-e}i^{*}\Theta,\Psi_{1},\dots,\Psi_{m-1}\rangle\otimes_{l((z))} K[[z-\pi]]\subset \phi^{*}\bfH\otimes K((z-\pi))$$
where $e=\ord_{z}(\gamma)=\ord_{z}(\mathrm{det}(\mathfrak{f}^{*}))$. Recall that the induced filtration is given by:
\begin{align*}
\Fil^{i}_{\wHH}=\left\{\begin{array}{ll}
		{\HH(E)}_{K}, & \mb{if } i\leq 0 \\
		{\XX_{\mathrm{prim}}(E)}_{K}, & \mb{if } 1\leq i\leq e\\
		0, & \mb{if} ~i\geq e+1.
	\end{array} 
	\right.
\end{align*}
Therefore $t_{H}(\wHH)=e=t_{N}(\wHH)$. Let  $(D',\mathfrak{q}_{D'},F_{D'})$ be a strict sub-object of $(\wHH,\qH,\mathfrak{f}^{*})$. Then 
$$\mathfrak{q}_{D'}= \qH\cap \phi^{*}D'\otimes K((z-\pi))$$ 
Note that $\fra^*(i^*\Theta) = \gamma \Psi_1$. Since
$\gamma$ is nonzero, we have $D'\neq \langle i^{*}\Theta \rangle$. Therefore $\mathfrak{q}_{D'}=\phi^{*}D'\otimes K[[z-\pi]]=\mathfrak{p}_{D'}$. Hence the induced filtration becomes 
\begin{align*}
	\Fil^{i}_{D'}=\left\{\begin{array}{ll}
		{D'}_{K}, & \mb{if } i\leq 0 \\
		0, & \mb{if} ~i\geq 1.
	\end{array} 
	\right.
\end{align*}
Hence we have $t_{H}(D')=0$. On the other hand, by Lemma \ref{det_pos}, we already have $$t_{N}(D')=\ord_{z}(\mathrm{det}(F_{D'}))\geq 0.$$ 
and this proves $(i)$.

$(ii)~$
This follows from $(i)$ along with 
the theory of local shtukas developed by Geneister and 
Lafforgue, Hartl that associates crystalline Galois representations
$\rho: \mathrm{Gal}(K^{\mathrm{sep}}/K) \map \mathrm{GL}_m(\FF_{\hq}((z)))$ to 
local shtukas of rank $m$ over $R$ with quasi-morphisms. The essential 
image of the $\bb{H}$ functor, c.f. \cite{Hartl 2011} page $1245$, 
inside the category of 
$z$-isocrystals with Hodge-Pink structure are the admissible ones. By 
Theorem 2.5.3 of \cite{Hartl 2011}, all weakly admissible $z$-isocrystals 
with Hodge-Pink structure over $K$ are admissible. Then $(i)$ implies $(\wHH,\qH, \fra^*)$ is admissible and therefore 
associates a crystalline Galois representation $\rho_{\wHH}:\mathrm{Gal}
(K^{\mathrm{sep}}/K) \map \mathrm{GL}_m(\FF_{\hq}((z)))$.
\end{proof}

\section{The cases of splitting numbers $m=1$ and $m=r$}\label{comparison}

\subsection{Canonical Lift (CL) Drinfeld modules}
A Drinfeld module $E$ is defined to be a \emph{canonical lift} (CL) if it admits an $A$-linear endomorphism $\psi \in \mathrm{Hom}_A(E,E)$ satisfying $\psi(x) \equiv x^\hq \pmod{\pi}$ for all $x \in \hat{\mathbb{G}}_{a}$. 

By \cite[Theorem 8.3]{BS b}, the splitting number $m$ of $E$ is equal to $1$ if and only if $E$ is a canonical lift. In this regime, the underlying module takes the form $\bH(E)=R\langle\Psi_{1}\rangle$, and \cite[Lemma 9.3]{PS-1} dictates that $\gamma=-\pi$. Furthermore, by Proposition \ref{diff}(i), the semilinear operator $\mff^{*}$ acts via 
$$ \mff^{*}(\Psi_{1}) = \mff^{*}(i^{*}\Theta_{1}) = -\gamma \Psi_{1} = \pi \Psi_{1}. $$ 
Consequently, the associated Galois representation is strictly one-dimensional.
 
\subsection{Carlitz Modules}\label{rk-1-com}
Let $E$ be a Carlitz module, defined classically as a Drinfeld $A$-module of rank $1$. Its $A$-module structure is explicitly governed by
$$ \varphi_{E}(z) = \pi z + z^{q}. $$ 
The module $E$ naturally admits a canonical lift via $\varphi_{E}(z)$. By \cite[Lemma 9.3]{PS-1}, we again find $\gamma=-\pi$. Given that $\bH(E)=R\langle\Psi_{1}\rangle$, Proposition \ref{diff}(i) implies that the action of the semilinear operator $\mff^{*}$ is determined by $\mff^{*}(\Psi_{1})=\mff^{*}(i^{*}\Theta_{1})=-\gamma \Psi_{1}=\pi \Psi_{1}$. 

Observe that in this setting, the relation $(z-\pi)\qH=\mathfrak{p}_{\bH_{\d}(E)}$ holds, which coincides perfectly with the standard Hodge--Pink structure for a Carlitz module (cf.\ \cite[p.\ 1290]{Hartl 2011}). Therefore, for a Carlitz module $E$, our $z$-isocrystal with Hodge--Pink structure $(\wHH,\qH,\mathfrak{f}^{*})$ fully recovers the classical framework established in \cite{Hartl 2011}. In particular, the associated Galois representation aligns precisely with the canonical representation derived from the Tate module.
 
\subsection{The case of maximal splitting number $m=r$}
Assume $E$ is a Drinfeld module of rank $r$ possessing the maximal splitting number $m=r$. We then have $\bH(E)=R\langle\Psi_{1}, \Psi_2, \dots, \Psi_r\rangle$. The semilinear operator $\fra^*$ acts on the quotient $R$-module $\HH(E)$ according to the following rules:
\begin{align*}
\fra^*(\Psi_i) &= \Psi_{i+1}, \quad \text{for all } i = 0, \dots, r-1, \\
\fra^*(\Psi_r) &= \phi(\lam_{r-1})\Psi_r + \dots + \phi(\lam_1)\Psi_1 + \gamma \Psi_1.
\end{align*}

\begin{lemma}\label{theta-lemma} 
Let $E/R$ be a Drinfeld module of rank $r$ with splitting number $r$. If $\Theta_{r}$ is the canonical character in $\bX_{r}(A)$, then 
$$ \Theta_{r} = \Psi_{1} \circ \mf g. $$
\end{lemma}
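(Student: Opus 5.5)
Since $E$ is a Drinfeld module, $h=d=1$, so $\Psi_1$ is a single character and $\mathrm{pr}_1$ is the identity. The candidate $\tTheta:=\Psi_1\circ\mf g$ is exactly the character $\tTheta_1$ built in the proof of Theorem \ref{isoupsilon} from diagram \eqref{frob-diag}. My plan is to show that $\tTheta$ is a nonzero $\d$-character of order $\le r$ satisfying the normalization that characterizes the canonical character $\Theta_r$, and then conclude by uniqueness. In the splitting-number-$r$ case, $\bX_r(E)/u^*\bX_{r-1}(E)$ is free of rank one over $R$ (by \cite{BS b}), and $\Theta_r$ is pinned down by the shape of $i^*\Theta_r$ as in \eqref{Thetapull}: leading coefficient $1$ on $\Psi_r$.

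\textbf{Step 1: $\tTheta$ is a $\d$-character of order $\le r$.} The map $\mf g$ is $A$-linear, because $P(\phi)$ is $A$-linear and $\phi\circ\iota$ identifies $N^1$ with $\ker\mf h$. Hence $\tTheta=\Psi_1\circ\mf g\in\bX_r(E)$.

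\textbf{Step 2: compute the restriction.} From the diagram, $(\phi\circ\iota)^*\tTheta=\Psi_1\circ P(\fra)=\Psi_{r+1}+a_{r-1}\Psi_r+\cdots+a_0\Psi_1$. In the proof of Theorem \ref{isoupsilon} we showed $\gamma_{\tTheta}=a_0$. Proposition \ref{diff}(1) then gives
$$\fra^*(\iota^*\tTheta)=\Psi_{r+1}+a_{r-1}\Psi_r+\cdots+a_1\Psi_2 .$$
Since $\fra^*\Psi_i=\Psi_{i+1}$ and $\fra^*$ is injective on $\varinjlim\Hom_A(N^n,\hG)$, injectivity following from Lemma \ref{ind-psi}, we get
$$\iota^*\tTheta=\Psi_r+\phi^{-1}(a_{r-1})\Psi_{r-1}+\cdots+\phi^{-1}(a_1)\Psi_1 .$$
The coefficients $a_i$ lie in $A$, so $\phi$ fixes them and $\phi^{-1}(a_i)=a_i$.

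\textbf{Step 3: uniqueness.} The difference $\Theta_r-\tTheta$ restricts on $N^r$ to a combination of $\Psi_1,\dots,\Psi_{r-1}$ only. Its class in $\bX_r(E)_K/u^*\bX_{r-1}(E)_K$ maps to zero under the injective map $\iota^*$ into $\bX(N^r)_K/u^*\bX(N^{r-1})_K$, so $\Theta_r-\tTheta\in u^*\bX_{r-1}(E)_K$. Splitting number $r$ means there are no nonzero characters of order $<r$, so $\bX_{r-1}(E)=0$ and hence $\Theta_r=\tTheta$.

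The main obstacle is this last step. I need both the injectivity of $\iota^*$ on the graded pieces, which follows from the exact sequence \eqref{connecting} once $\bX(E)=0$, and the vanishing $\bX_{r-1}(E)=0$ in the maximal-splitting case (from \cite{BS b}, Section 9). I also need to confirm that the paper's normalization of "canonical character" is the monic one of \eqref{Thetapull}, so that the comparison of leading coefficients is legitimate. If the canonical character is instead normalized integrally, I will adjust by a unit and check it is $1$ by comparing $D\Theta$.
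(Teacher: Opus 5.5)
Your proof is correct. It rests on the same core idea as the paper's: match the leading coefficient of the restriction to the kernels, then use that $E$ has essentially only one character of order $r$. The packaging differs.

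The paper uses $\bX_r(E)=R\langle\Theta_r\rangle$ to write $\Psi_1\circ\mf{g}=b\,\Theta_r$. It then applies $\iota^*\phi^*$ to both sides, using Proposition \ref{diff} together with \eqref{Thetapull} for $\Theta_r$, and reads off $\phi(b)=1$ from the $\Psi_{r+1}$-coefficient alone. So it never needs $\gamma_{\tTheta}=a_0$, injectivity of $\fra^*$, or injectivity of $\iota^*$ on graded pieces.

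You instead descend to $N^r$ and replace the rank-one statement by $\bX_{r-1}(E)=0$ plus graded injectivity of $\iota^*$. That injectivity uses naturality of the connecting map $\partial$ under $u\colon N^r\to N^{r-1}$, not just exactness of \eqref{connecting}. Your route is slightly longer, but it yields $\iota^*\Theta_r=\Psi_r+a_{r-1}\Psi_{r-1}+\cdots+a_1\Psi_1$ and $\gamma_{\Theta_r}=a_0$ as byproducts, so Corollary \ref{comm-frob} follows at once.

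Your worry about normalization is unfounded: the paper's own proof uses the monic form \eqref{Thetapull} in exactly the same way.
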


\begin{proof}
Recall the following commutative diagram:
$$
\xymatrix{ 
 N^{r+1} \ar[r]^-{\phi\circ \iota} \ar[d]_-{P(\mfrak{f})} & J^{r}E \ar[r] \ar@{.>}[dl]_{\mf g} \ar[d]_-{P(\mfrak{\phi})} & \overline{E} \ar[d]_-{P(\overline{F})} \\
 N^1\ar[d]_{\Psi_{1}} \ar[r]^-{\phi \circ \iota} & E \ar[r]^-h & \overline{E} \\
\hG & & 
}
$$
From this construction, it follows that $\Psi_{1}\circ \mf g\in \bX_{r}(E)$. Because $E$ has splitting number $r$, the module $\bX_{r}(E)$ is generated by $\Theta_r$, so $\bX_{r}(E)=R\langle\Theta_{r}\rangle$. Consequently, $\Psi_{1}\circ \mf g = b \cdot \Theta_{r}$ for some $b\in R$. 

By the commutativity of the diagram, we observe: 
\begin{equation}\label{e1}
\Psi_{1}\circ \mf g\circ \phi\circ \iota = \Psi_{1}\circ P(\mff) = \Psi_{r+1} + a_{r-1}\Psi_{r} + \dots + a_{0}\Psi_{1}.
\end{equation}
Conversely, utilizing \eqref{Thetapull}, we compute:
\begin{align}\label{e2}
b \cdot \Theta_{r}\circ \phi\circ \iota &= \iota^{*}\phi^{*}(b \cdot \Theta_{r}) \nonumber\\
&= \phi(b) \cdot (\mff^{*}\iota^{*}\Theta_r + \gamma \Psi_{1}) \nonumber\\
&= \phi(b)(\Psi_{r+1} - \phi(\lambda_{r-1})\Psi_{r} + \dots - \phi(\lambda_1)\Psi_2 + \gamma \Psi_{1}).
\end{align}
Comparing the coefficients of $\Psi_{r+1}$ in equations \eqref{e1} and \eqref{e2}, it is evident that $b=1$, which yields the desired result.
\end{proof}

\begin{corollary}\label{comm-frob} 
Let $E$ be a Drinfeld module satisfying the hypotheses of Lemma \ref{theta-lemma}. Then 
$$ \lambda_{i} = -a_{i} \quad \text{for } 1 \leq i \leq r-1, \quad \text{and} \quad \gamma = a_{0}. $$
\end{corollary}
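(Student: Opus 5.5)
The plan is to read off the corollary by comparing coefficients in the two expressions \eqref{e1} and \eqref{e2} from the proof of Lemma \ref{theta-lemma}, now with $b=1$. With $b=1$, that lemma gives $\Theta_r=\Psi_1\circ\mf g$. Pulling back along $\phi\circ\iota$ yields the identity
$$\Psi_{r+1}+a_{r-1}\Psi_r+\cdots+a_1\Psi_2+a_0\Psi_1=\Psi_{r+1}-\phi(\lambda_{r-1})\Psi_r-\cdots-\phi(\lambda_1)\Psi_2+\gamma\Psi_1$$
in $\Hom_A(N^{r+1},\hG)$. The left side is the expansion of $\Psi_1\circ P(\mff)$ via $\fra^*\Psi_i=\Psi_{i+1}$. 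The right side comes from Proposition \ref{diff}(1) together with \eqref{Thetapull}, in which $\fra^*$ twists the coefficients $\lambda_i$ by $\phi$.

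By Lemma \ref{ind-psi} (here $h=1$, since $E$ is a Drinfeld module), the characters $\Psi_1,\dots,\Psi_{r+1}$ are $R$-linearly independent. So we may compare coefficients. Doing so gives $\gamma=a_0$ directly, and $a_i=-\phi(\lambda_i)$ for $1\le i\le r-1$.

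It remains to remove the Frobenius twist, i.e.\ to show $\phi(\lambda_i)=\lambda_i$, or equivalently $\phi(a_i)=a_i$. This is the only nontrivial step. The $a_i$ are the coefficients of the characteristic polynomial $P(X)\in A[X]$ of $\overline{F}$, and they enter the diagram \eqref{frob-diag} through $\theta:A\to R$. So $a_i$ lies in $\theta(A)\subset\hat A$. Since $\phi$ is an $\hat A$-algebra endomorphism, it restricts to the identity on $\hat A$, so $\phi(a_i)=a_i$. Because $\phi$ is injective (it is an automorphism, as used in Lemma \ref{ind-psi}), $\phi(\lambda_i)=-a_i=\phi(-a_i)$ gives $\lambda_i=-a_i$.

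One point needs care: in the Drinfeld case with splitting number $r$, the $\lambda_i$ in \eqref{Thetapull} must agree with the coefficients used in \eqref{e2}, including the sign convention. I would check this against the displayed action of $\fra^*$ on $\Psi_r$. If that formula is read with untwisted $\lambda_i$, then the identification is immediate and no further work is needed.
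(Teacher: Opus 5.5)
Your proposal is correct and follows the paper's proof. Like the paper, you pull back $\Theta_r=\Psi_1\circ\mf g$ along $\phi\circ\iota$, compare coefficients of the $R$-linearly independent $\Psi_1,\dots,\Psi_{r+1}$, and remove the $\phi$-twist using that $\phi$ fixes $\theta(A)$ (the paper's ``$\phi$ on $A$ is identity''), which you justify a little more carefully via injectivity of $\phi$. Your closing caveat is unnecessary: the $\lambda_i$ in \eqref{e2} are by construction those of \eqref{Thetapull}, and the twist comes only from the semilinearity of $\fra^*$.
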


\begin{proof}
By Lemma \ref{theta-lemma}, we established that $\Theta_{r}=\Psi_{1}\circ \mf g$. Right-composing both sides with $\phi\circ \iota$ and invoking the commutativity of diagram \ref{frob-diag}, we obtain the identity:
$$ \Psi_{r+1} - \phi(\lambda_{r-1})\Psi_{r} - \dots - \phi(\lambda_1)\Psi_{2} + \gamma \Psi_{1} = \Psi_{r+1} + a_{r-1}\Psi_{r} + \dots + a_0 \Psi_{1}. $$
Comparing the coefficients of the basis elements on both sides yields 
$$ \lambda_{i} = -a_{i} \quad \text{for } 1\leq i\leq r-1 ~
\text{(since $\phi$ on $A$ is identity), } \text{and } \gamma = a_{0}. $$
\end{proof}

\begin{theorem} \label{Weil-poly} 
Let $E$ be a Drinfeld module satisfying the hypotheses of Lemma \ref{theta-lemma}. Then the characteristic polynomial of the linear operator $\mff^{*}$ acting on $\bH_{\d}(E)$ is given by 
$$ P(X) = X^r + a_{r-1}X^{r-1} + \dots + a_{0}. $$
\end{theorem}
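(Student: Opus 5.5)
The plan is to compute the matrix of $\mff^*$ on $\bH_{\d}(E)$ in the basis $\{\Psi_1,\dots,\Psi_r\}$ and read off its characteristic polynomial as that of a companion matrix. The coefficients will be identified with those of $P$ via Corollary \ref{comm-frob}.

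First, since the splitting number is $r$, Section 9B of \cite{BS b} gives $\bH(E)=R\langle\Psi_1,\dots,\Psi_r\rangle$. The action of $\mff^*$ on this basis is the one recorded just before Lemma \ref{theta-lemma}: $\mff^*\Psi_i=\Psi_{i+1}$ for $i\le r-1$. The last basis vector is handled through (\ref{Thetapull}), which says $i^*\Theta_r=\Psi_r-\lambda_{r-1}\Psi_{r-1}-\cdots-\lambda_1\Psi_1$. Combined with Proposition \ref{diff}(1) and the relation $\mff^*\Psi_r\equiv \mff^*\Psi_r-\iota^*\phi^*\Theta_r$ in $\bH(E)$, this gives
$$\mff^*\Psi_r=\phi(\lambda_{r-1})\Psi_r+\cdots+\phi(\lambda_1)\Psi_2-\gamma\Psi_1 .$$
This sign convention is the one used in the proof of Lemma \ref{theta-lemma}, and I will reconcile it with the displayed formula before the lemma.

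Next I apply Corollary \ref{comm-frob}, which gives $\lambda_i=-a_i$ and $\gamma=a_0$. The $a_i$ lie in $A$, and $\phi$ fixes $\hat A\supset A$. Hence $\phi(\lambda_i)=-a_i$, and the last column of the matrix is $(-a_0,-a_1,\dots,-a_{r-1})^t$. The matrix of $\mff^*$ is therefore exactly the companion matrix of $X^r+a_{r-1}X^{r-1}+\cdots+a_0$, with all entries in $A$, so they are $\phi$-invariant.

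The subtle point is that $\mff^*$ is only $\phi$-semilinear, so "characteristic polynomial of the linear operator" needs justification. I would argue as follows. Because the matrix has $\phi$-fixed entries, $\mff^*$ preserves the $\FF_{\hq}\otimes A$-span of the $\Psi_i$ (more precisely, the $\hat A$-span). On that span it acts linearly, and $\bH_{\d}(E)=\hat A\langle\Psi_i\rangle\otimes_{\hat A} l((z))$, with $\hat A\subset \FF_{\hq}[[z]]$. The characteristic polynomial is then that of this companion matrix, which is $P(X)$ by the standard expansion along the last column, computed as $\det(X-C)$.

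The main obstacle is sign bookkeeping in $\mff^*\Psi_r$. The sign of the $\gamma\Psi_1$ term must be consistent with Corollary \ref{comm-frob}'s identity $\Theta_r\circ\phi\circ\iota=\Psi_1\circ P(\mff)$ modulo $\iota^*\phi^*\bX_{r-1}$. To settle this, I will derive the relation directly: in $\bH(E)$ we have $\iota^*\phi^*\Theta_r\equiv0$, and by Corollary \ref{comm-frob}'s computation this equals $\Psi_{r+1}+a_{r-1}\Psi_r+\cdots+a_0\Psi_1$. This gives $\mff^*\Psi_r=\Psi_{r+1}\equiv -a_{r-1}\Psi_r-\cdots-a_0\Psi_1$, so the companion matrix and the claimed polynomial follow without tracking $\lambda$'s separately.
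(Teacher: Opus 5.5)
Your proposal is correct and follows essentially the same route as the paper. Both write $\mff^*$ in the basis $\{\Psi_1,\dots,\Psi_r\}$ as a companion matrix with last column $(-\gamma,\phi(\lambda_1),\dots,\phi(\lambda_{r-1}))^t$, then substitute $\lambda_i=-a_i$ and $\gamma=a_0$ from Corollary \ref{comm-frob}. Your sign derivation via Proposition \ref{diff}(1) and (\ref{Thetapull}) agrees with the matrix used in the paper's proof, not with the $+\gamma$ display before Lemma \ref{theta-lemma}. Your extra remarks are sound and add rigor the paper leaves implicit: the $\phi$-fixed entries make $\mff^*$ genuinely linear on the $\hat A$-span of the $\Psi_i$, and the shortcut $\iota^*\phi^*\Theta_r=\Psi_1\circ P(\mff)\equiv 0$ in $\bH(E)$ bypasses the $\lambda$'s entirely.
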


\begin{proof} 
Recall from \cite[Section 9]{BS b} that the matrix representation of $\mff^{*}$ with respect to the ordered basis $\{\Psi_{1}, \Psi_{2}, \dots, \Psi_r \}$ takes the form:
\[
[\mathfrak{f}^{*}]=
\begin{bmatrix}
    0 & \dots & 0 & -\gamma  \\
    1 & \dots & 0 & \phi(\lambda_1) \\
    \vdots & \ddots & \vdots & \vdots \\
    0 & \dots & 1 & \phi(\lambda_{r-1})
\end{bmatrix}. 
\]
Therefore, the characteristic polynomial of $\mff^{*}$ is easily computed to be 
$$ X^r - \phi(\lambda_{r-1})X^{r-1} - \dots - \phi(\lambda_1)X + \gamma. $$ 
Applying the identities established in Corollary \ref{comm-frob}, this polynomial matches perfectly with 
$$ P(X) = X^r + a_{r-1}X^{r-1} + \dots + a_{0}, $$
which completes the proof.
\end{proof}

 %\subsection{Non-CL Drinfeld modules of rank $2$:}
 %When $E$ is a non-CL Drinfeld module of rank $2$. Then the splitting number of $E$ is $2$ and by Theorem $1.1$ in \cite{PS-3} we have $\bH(E)_K$ is isomorphic to $\Hdr(E)_K$. Therefore we get a Frobenius operator on the de Rham cohomology coming the delta isocrystal. The existence of Frobenius operator on the de Rham cohomology was known for the Drinfeld module over  a finite field (cf.\cite{Angles}, \cite{Gekeler_F}). However, our comparison theorem allow us to have such an operator for global function fields as well.
\footnotesize{
\bibliographystyle{amsalpha}
%\begin{thebibliography}{A}

\end{document}